\begin{document}

\newtheorem{etheorem}{Theorem}
\newtheorem{eproposition}{Proposition}[section]
\newtheorem{ecorollary}{Corollary}

\theoremstyle{definition}

\newtheorem*{eexample}{Example}

\renewcommand{\thefootnote}{}

\title[Certain residual properties of~HNN-ex\-ten\-sions]{Certain residual properties of~HNN-ex\-ten\-sions\\ with~normal associated subgroups}

\author{E.~V.~Sokolov \lowercase{and}~E.~A.~Tumanova}
\address{Ivanovo State University, Russia}
\email{ev-sokolov@yandex.ru, helenfog@bk.ru}

\begin{abstract}
Let $\mathbb{E}$ be~the~HNN-ex\-ten\-sion of~a~group~$B$ with~subgroups~$H$ and~$K$ associated according to~an~isomorphism $\varphi\colon H \to K$. Suppose that $H$ and~$K$ are~normal in~$B$ and~$(H \cap K)\varphi = H \cap K$. Under~these assumptions, we~prove necessary and~sufficient conditions for~$\mathbb{E}$ to~be~residually a~$\mathcal{C}$\nobreakdash-group, where $\mathcal{C}$ is~a~class of~groups closed under taking subgroups, quotient groups, and~unrestricted wreath products. Among other things, these conditions give new facts on~the~residual finiteness and~the~residual $p$\nobreakdash-fi\-nite\-ness of~the~group~$\mathbb{E}$.
\end{abstract}

\keywords{Residual properties, residual finiteness, residual $p$\nobreakdash-fi\-nite\-ness, residual solvability, root class of~groups, HNN-ex\-ten\-sion}

\thanks{The~study was~supported by~the~Russian Science Foundation grant No.~24-21-00307,\\ \url{http://rscf.ru/en/project/24-21-00307/}}

\maketitle

\section{Introduction}\label{es01}

Let $\mathcal{C}$ be~a~class of~groups. Following~\cite{Hall1954PLMS}, we~say that a~group~$X$ is~\emph{residually a~$\mathcal{C}$\nobreakdash-group} if any of~its non-trivi\-al elements is~mapped to~a~non-trivi\-al element by~a~suitable homomorphism of~$X$ onto~a~$\mathcal{C}$\nobreakdash-group (i.e.,~a~group from~the~class~$\mathcal{C}$). Recall that, if~$\mathcal{C}$ is~the~class of~all finite groups (or~all solvable groups, or~finite $p$\nobreakdash-groups, where $p$ is~a~prime), then a~residually $\mathcal{C}$\nobreakdash-group is~also referred to~as~a~\emph{residually finite} (respectively, \emph{residually solvable}, \emph{residually $p$\nobreakdash-fi\-nite}) group. We~therefore use the~term ``\emph{residual $\mathcal{C}$\nobreakdash-ness}'' along with~the~well-known notions of~residual finiteness, residual $p$\nobreakdash-fi\-nite\-ness, and~residual solvability. Let~us clarify that the~residual $\mathcal{C}$\nobreakdash-ness of~a~group~$X$ is~the~same as~the~property of~$X$ to~be~residually a~$\mathcal{C}$\nobreakdash-group. In~Section~\ref{es01}, for~brevity, this term will also assume that $\mathcal{C}$ is~a~root class of~groups.

According to~\cite{Gruenberg1957PLMS, AzarovTieudjo2002PIvSU}, a~class of~groups~$\mathcal{C}$ is~called a~\emph{root class} if it contains non-trivi\-al groups, is~closed under taking subgroups, and~satisfies any of~the~following conditions, the~equivalence of~which is~proved in~\cite{Sokolov2015CA}:

1)\hspace{1ex}for~every group~$X$ and~for~every subnormal series $1 \leqslant Z \leqslant Y \leqslant X$ whose factors $X/Y$ and~$Y/Z$ belong to~$\mathcal{C}$, there exists a~normal subgroup~$T$ of~$X$ such that $X/T \in \mathcal{C}$ and~$T \leqslant Z$ (\emph{Gruenberg's condition});

2)\hspace{1ex}the~class~$\mathcal{C}$ is~closed under taking unrestricted wreath products;

3)\hspace{1ex}the~class~$\mathcal{C}$ is~closed under taking extensions and,~together with~any two groups~$X$ and~$Y$, contains the~unrestricted direct product $\prod_{y \in Y}X_{y}$, where $X_{y}$ is~an~isomorphic copy of~$X$ for~each $y \in Y$.

Examples of~root classes are~the~classes of~all finite groups, finite $p$\nobreakdash-groups (where $p$ is~a~prime), periodic $\mathfrak{P}$\nobreakdash-groups of~finite exponent (where $\mathfrak{P}$ is~a~non-empty set of~primes), all solvable groups, and~all torsion-free groups. It~is~also easy to~see that the~intersection of~a~family of~root classes is~again a~root class if it contains a~non-trivi\-al group. The~use of~the~concept of~a~root class turned~out to~be~very productive in~studying the~residual properties of~free constructions of~groups: free and~tree products, HNN-ex\-ten\-sions, fundamental groups of~graphs of~group,~etc. Clearly, it~enables~us to~prove several statements at~once instead of~just one. But~what is~more important, the~facts on~residual $\mathcal{C}$\nobreakdash-ness, where $\mathcal{C}$ is~an~\emph{arbitrary} root class of~groups, and~the~methods for~finding them are~well compatible with~each other. This allows one to~easily move from~one free construction to~another and~quickly complicate the~groups under consideration (see, e.g.,~\cite{Tumanova2017SMJ, SokolovTumanova2017MN, SokolovTumanova2019AL, SokolovTumanova2020SMJ, Sokolov2021SMJ2, Sokolov2021JA, SokolovTumanova2023SMJ}).

When studying the~residual $\mathcal{C}$\nobreakdash-ness of~a~group-the\-o\-ret\-ic construction, the~main question is~whether the~construction inherits this property from~the~groups that compose~it. As~a~rule, this question can only be~answered by~imposing various restrictions on~the~above-men\-tioned groups and~their subgroups. The~goal of~this paper is~to~find necessary and~sufficient conditions for~the~residual $\mathcal{C}$\nobreakdash-ness of~an~HNN-ex\-ten\-sion whose associated subgroups are~normal in~the~base group (here and~below we follow~\cite{LyndonSchupp1977} in~the~use of~terms related to~HNN-ex\-ten\-sions).

Considering the~known results on~the~residual $\mathcal{C}$\nobreakdash-ness of~HNN-ex\-ten\-sions, one can observe that most of~them concern the~case of~residual finiteness~\cite{Meskin1972TAMS, BaumslagBTretkoff1978CA, Shirvani1985AM, RaptisVarsos1992JAMS, KimTang1999CMB, Moldavanskii2002BIvSU, BorisovSapir2005IM, WongWong2012MS, Azarov2013SMJ2, WongWong2014AC, Azarov2014MN, Logan2018CA, AsriWongWong2019MS}. The~papers~\cite{RaptisVarsos1991JPAA, Chatzidakis1994IJM, Moldavanskii2000BIvSU, Moldavanskii2003BIvSU, Moldavanskii2006BIvSU, AschenbrennerFriedl2011JPAA} give some facts on~the~residual $p$\nobreakdash-fi\-nite\-ness of~this construction. In~\cite{Tieudjo2005JAMPA, Tumanova2014MAIS, Goltsov2015MN, Tumanova2017SMJ, SokolovTumanova2017MN, SokolovTumanova2019AL, Tumanova2019SMJ, Tumanova2020IVM, Sokolov2022CA, Sokolov2023JGT, SokolovTumanova2023SMJ}, the~residual $\mathcal{C}$\nobreakdash-ness of~HNN-ex\-ten\-sions is~studied provided $\mathcal{C}$ is~an~arbitrary root class of~groups, which possibly satisfies some additional restrictions. It~should be~noted that, at~the~time of~writing these lines, the~results of~the~last listed papers generalize all known facts on~the~residual solvability and~residual $\mathfrak{P}$\nobreakdash-fi\-nite\-ness of~HNN-ex\-ten\-sions, where $\mathfrak{P}$ is~a~non-empty set of~primes, as~above.

The~main method for~studying the~residual $\mathcal{C}$\nobreakdash-ness of~free constructions of~groups is~the~so-called ``filtration approach''. It~was originally proposed in~\cite{BaumslagG1963TAMS1} to~study the~residual finiteness of~the~free product of~two groups with~an~amalgamated subgroup. After a~number of~generalizations and~adaptations~\cite{BaumslagBTretkoff1978CA, Hempel1987AMS, Shirvani1987JPAA, Loginova1999SMJ, Moldavanskii2000BIvSU, Tumanova2014MN}, this approach was~extended in~\cite{Sokolov2021SMJ1} to~the~case of~an~arbitrary root class~$\mathcal{C}$ and~the~fundamental group of~an~arbitrary graph of~groups. The~method includes two steps and, when applied to~HNN-ex\-ten\-sions, can be~described as~follows.

The~first step is~to~find conditions for~an~HNN-ex\-ten\-sion~$\mathbb{E}$ to~have a~homomorphism onto~a~$\mathcal{C}$\nobreakdash-group that acts injectively on~the~base group. The~existence of~such a~homomorphism is~sometimes equivalent to~the~residual $\mathcal{C}$\nobreakdash-ness of~$\mathbb{E}$. For~example, this equivalence holds if $\mathcal{C}$ consists of~finite groups. But~in~general this is~not the~case~\cite{SokolovTumanova2020LJM, Sokolov2023LJM}.

We~call the~assertions proved at~this step the~\emph{first-lev\-el results}. It~should be~noted that there are~no general approaches to~finding them and~each new fact of~this type is~very valuable. Examples of~such assertions are~the~theorem on~the~residual finiteness of~an~HNN-ex\-ten\-sion of~a~finite group~\cite{BaumslagBTretkoff1978CA} and~the~criteria for~the~residual $p$\nobreakdash-fi\-nite\-ness of~the~same construction~\cite{RaptisVarsos1991JPAA, Chatzidakis1994IJM, Moldavanskii2000BIvSU, AschenbrennerFriedl2011JPAA}. When $\mathcal{C}$ is~an~arbitrary root class of~groups closed under taking quotient groups, first-lev\-el results are~found~for
\begin{list}{}{\topsep=0pt\itemsep=0pt\labelsep=1ex\labelwidth=1.1ex\leftmargin=\parindent\addtolength{\leftmargin}{\labelsep}\addtolength{\leftmargin}{\labelwidth}}
\item[--]an~HNN-ex\-ten\-sion with~coinciding associated subgroups that are~normal in~the~base group~\cite{Tumanova2014MAIS};
\item[--]a~number of~HNN-ex\-ten\-sions in~which at~least one of~the~associated subgroups lies in~the~center of~the~base group~\cite{SokolovTumanova2019AL, Sokolov2022CA, SokolovTumanova2023SMJ}.
\end{list}
In~this paper, we~supplement this list and~prove a~criterion for~the~existence of~a~homomorphism with~the~properties described above provided the~associated subgroups of~an~HNN-ex\-ten\-sion~$\mathbb{E}$ are~normal in~the~base group, while their intersection is~normal in~$\mathbb{E}$ (see Theorem~\ref{et01} below).

Omitting technical details, we~can say that the~second step of~the~method consists in~finding a~sufficiently large number of~homomorphisms mapping the~HNN-ex\-ten\-sion~$\mathbb{E}$ onto~HNN-ex\-ten\-sions satisfying the~conditions of~the~previously gotten first-lev\-el results. This allows~us to~prove the~residual $\mathcal{C}$\nobreakdash-ness of~$\mathbb{E}$ when the~base group does~not necessarily belong to~the~class~$\mathcal{C}$. We~call the~sufficient conditions of~the~residual $\mathcal{C}$\nobreakdash-ness found at~this step the~\emph{se\-cond-lev\-el results}. As~a~rule, they can be~proved only under restrictions stronger than those at~step~1. This is~illustrated, in~particular, by~many years of~studying the~property of~residual finiteness, during which a~universal criterion for~the~residual finiteness of~an~HNN-ex\-ten\-sion of~an~arbitrary residually finite group was~never found. In~the~present paper, the~se\-cond-lev\-el results, Theorems~\ref{et04}\nobreakdash--\ref{et06}, are~also proved under certain assumptions supplementing the~conditions of~the~criterion given by~Theorem~\ref{et01}.

\section{Statement of~results}\label{es02}

In~what follows, the~expression $\mathbb{E} = \langle B,t;\,t^{-1}Ht = K,\,\varphi \rangle$ means that $\mathbb{E}$ is~the~HNN-ex\-ten\-sion of~a~group~$B$ with~a~stable letter~$t$ and~subgroups~$H$ and~$K$ associated according to~an~isomorphism $\varphi\colon H \to K$. Let~us say that the~group~$\mathbb{E}$ \emph{satisfies~$(*)$}~if

1)\hspace{1ex}the~subgroups~$H$ and~$K$ are~normal in~$B$;

2)\hspace{1ex}the~subgroup $L = H \cap K$ is~$\varphi$\nobreakdash-in\-var\-i\-ant, i.e.,~$\varphi|_{L} \in \operatorname{Aut}L$.

If $X$ is~a~group and~$Y$ is~a~normal subgroup of~$X$, then the~restriction to~$Y$ of~any inner automorphism of~$X$ is~an~automorphism of~$Y$. The~set of~all such automorphisms is~a~subgroup of~$\operatorname{Aut}Y$, which we denote below by~$\operatorname{Aut}_{X}(Y)$. Let~us note that, if~$\mathbb{E}$ satisfies~$(*)$, then the~following is~defined:
\begin{list}{}{\topsep=0pt\itemsep=0pt\labelsep=1ex\labelwidth=2ex\leftmargin=\parindent\addtolength{\leftmargin}{\labelsep}\addtolength{\leftmargin}{\labelwidth}}
\item[{\makebox[1.2ex][l]{a}})]the~subgroups $\mathfrak{H} = \operatorname{Aut}_{B}(H)$, $\mathfrak{K} = \varphi\operatorname{Aut}_{B}(K)\varphi^{-1}$, and~$\mathfrak{U} = \operatorname{sgp}\{\mathfrak{H},\,\mathfrak{K}\}$ of~$\operatorname{Aut}H$;
\item[{\makebox[1.2ex][l]{b}})]the~subgroups $\mathfrak{L} = \operatorname{Aut}_{B}(L)$, $\mathfrak{F} = \operatorname{sgp}\{\varphi|_{L}\}$, and~$\mathfrak{V} = \operatorname{sgp}\{\mathfrak{L},\,\mathfrak{F}\}$ of~$\operatorname{Aut}L$.
\end{list}
Throughout the~paper, it~is~assumed that, if $\mathbb{E} = \langle B,t;\,t^{-1}Ht = K,\,\varphi \rangle$, then the~symbols~$L$,~$\mathfrak{H}$, $\mathfrak{K}$, $\mathfrak{U}$, $\mathfrak{L}$, $\mathfrak{F}$, and~$\mathfrak{V}$ are~defined as~above.

\begin{etheorem}\label{et01}
Suppose that the~group $\mathbb{E} = \langle B,t;\,t^{-1}Ht = K,\,\varphi \rangle$ satisfies~$(*)$ and~$\mathcal{C}$ is~a~root class of~groups closed under taking quotient groups. If~$B \in \mathcal{C}$\textup{,} then the~following statements are~equivalent and~any of~them implies that $\mathbb{E}$ is~residually a~$\mathcal{C}$\nobreakdash-group.

\textup{1.\hspace{1ex}}There exists a~homomorphism of~$\mathbb{E}$ onto~a~group from~$\mathcal{C}$ acting injectively on~the~subgroup~$B$.

\textup{2.\hspace{1ex}}The~inclusions $\mathfrak{U},\,\mathfrak{V} \in \mathcal{C}$ hold.
\end{etheorem}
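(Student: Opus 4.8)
The plan is to prove the chain $2 \Rightarrow 1 \Rightarrow (\text{residual } \mathcal{C}\text{-ness})$ and then $1 \Rightarrow 2$, so that the two numbered statements become equivalent and each forces residual $\mathcal{C}$-ness. The implication $1 \Rightarrow (\text{residual } \mathcal{C}\text{-ness})$ should be the ``easy'' direction in the sense that it is a known general fact: if an HNN-extension $\mathbb{E}$ admits a homomorphism onto a $\mathcal{C}$-group that is injective on the base group $B$ (which contains both associated subgroups), then one combines this homomorphism with the natural maps coming from the normal-form theorem to separate an arbitrary nontrivial element of $\mathbb{E}$; more precisely, a nontrivial element either lies in $B$ and is handled directly, or has syllable length $\geqslant 1$ and is detected after passing to a suitable finite-index-type quotient, using that $\mathcal{C}$ is closed under subgroups and wreath products (root-class closure). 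I would invoke the general filtration machinery of~\cite{Sokolov2021SMJ1} here rather than reprove it, citing it as the reason statement~1 implies the conclusion.

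The substantive work is the equivalence $1 \Leftrightarrow 2$, and the hard direction is $2 \Rightarrow 1$: given $\mathfrak{U}, \mathfrak{V} \in \mathcal{C}$, we must actually construct a homomorphism of $\mathbb{E}$ onto a $\mathcal{C}$-group injective on $B$. The idea is that a homomorphism $\psi$ from $\mathbb{E}$ is determined by a homomorphism $\psi|_B$ on $B$ together with the image of the stable letter $t$, subject to the HNN relation $t^{-1}(h\psi)t = (h\varphi)\psi$ for $h \in H$. Requiring $\psi|_B$ to be injective forces us to essentially take the image of $B$ to be a copy of $B$ itself; the stable letter must then act on this copy by an automorphism extending (the conjugation action encoded by) $\varphi$. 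The obstruction to doing this inside $\operatorname{Aut}H$ on the nose is precisely whether the subgroup $\mathfrak{U} = \operatorname{sgp}\{\mathfrak{H}, \mathfrak{K}\}$ generated by the two ``conjugation groups'' $\mathfrak{H} = \operatorname{Aut}_B(H)$ and $\mathfrak{K} = \varphi \operatorname{Aut}_B(K)\varphi^{-1}$ is a $\mathcal{C}$-group — and similarly $\mathfrak{V}$ on the intersection $L$. So I would: (i) form a candidate target group as a suitable quotient or extension built from $B$ and the groups $\mathfrak{U}$, $\mathfrak{V}$, using that $\mathcal{C}$ is closed under extensions and unrestricted direct products to keep it in $\mathcal{C}$ once $B, \mathfrak{U}, \mathfrak{V} \in \mathcal{C}$; (ii) define the action of $t$ on this target so that conjugation by $t$ induces on the (image of the) associated subgroups the automorphism $\varphi$, which is possible exactly because $\varphi|_L$ lies in $\mathfrak{F} \leqslant \mathfrak{V}$ and the mismatch between the $\mathfrak{H}$- and $\mathfrak{K}$-actions on $H$ is absorbed in $\mathfrak{U}$; (iii) check the HNN relation holds, so the map extends to $\mathbb{E}$; and (iv) verify injectivity on $B$ by construction. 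The delicate point in step (ii) is reconciling the two subgroups $H$ and $K$, which need not coincide: one must work with $L = H \cap K$, where $\varphi$ restricts to an automorphism by hypothesis $(*)$, and use the normality of $H$ and $K$ in $B$ to glue the local data — this is exactly why $\mathfrak{V}$ (built from $\mathfrak{L}$ and $\mathfrak{F} = \operatorname{sgp}\{\varphi|_L\}$) must be in $\mathcal{C}$ in addition to $\mathfrak{U}$.

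For the converse $1 \Rightarrow 2$, suppose $\psi \colon \mathbb{E} \to Q \in \mathcal{C}$ is injective on $B$. Then conjugation in $Q$ by elements of $B\psi \cong B$ realizes $\mathfrak{H}$ inside $Q$ (via the isomorphism $\psi|_B$), and conjugation by $t\psi$ conjugates $H\psi$ to $K\psi$ in a way compatible with $\varphi$, so that conjugation by $(t\psi)^{-1} \cdot (\text{stuff})$ realizes the generators of $\mathfrak{K}$ as well; hence $\mathfrak{U}$, being generated by $\mathfrak{H}$ and $\mathfrak{K}$, embeds in a section of $Q$ and therefore lies in $\mathcal{C}$ (using closure under subgroups and quotients). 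The same argument restricted to $L$, where everything is $\varphi$-invariant, gives $\mathfrak{V} \in \mathcal{C}$. I expect this direction to be a relatively short ``transport of structure'' argument once the bookkeeping identifying $\mathfrak{H}, \mathfrak{K}, \mathfrak{L}, \mathfrak{F}$ with concrete subgroups of $\operatorname{Aut}H$ and $\operatorname{Aut}L$ is set up carefully; the main care is to be sure the generators of $\mathfrak{K} = \varphi\operatorname{Aut}_B(K)\varphi^{-1}$ — not of $\operatorname{Aut}_B(K)$ directly — are the ones appearing, which is why conjugation by $t\psi$ (implementing $\varphi$) has to be composed in. The single biggest obstacle overall is the explicit construction of the target group in $2 \Rightarrow 1$: one needs a group containing a copy of $B$ on which an automorphism extending $\varphi$ acts, lives in $\mathcal{C}$, and is still an HNN-type quotient of $\mathbb{E}$ — assembling this from $B$, $\mathfrak{U}$, $\mathfrak{V}$ while respecting the interplay of $H$, $K$, and $L = H \cap K$ is where the normality hypotheses in $(*)$ and the root-class closure properties all have to be used simultaneously.
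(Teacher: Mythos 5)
Your overall architecture (prove $2\Rightarrow 1$, $1\Rightarrow 2$, and $1\Rightarrow{}$residual $\mathcal{C}$\nobreakdash-ness separately) matches the paper, and two of the three pieces are essentially fine: the implication from Statement~1 to residual $\mathcal{C}$\nobreakdash-ness is indeed dispatched by citation (the paper uses Proposition~\ref{ep310}, a special case of a result from~\cite{Sokolov2023LJM}, rather than~\cite{Sokolov2021SMJ1}, but the spirit is the same), and your ``transport of structure'' argument for $1\Rightarrow 2$ is the right idea --- the paper packages it by observing that the amalgam $\mathbb{B}=\langle B*B;\,H=K,\varphi\rangle$ embeds in $\mathbb{E}$ via $x\mapsto t^{-1}xt$, $y\mapsto y$, that $\mathfrak{U}=\operatorname{Aut}_{\mathbb{B}}(H)$ and $\mathfrak{V}=\operatorname{Aut}_{\mathbb{E}}(L)$, and then applies Proposition~\ref{ep304}.

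The genuine gap is in $2\Rightarrow 1$, which is where all the real work lies, and your sketch there would not go through as written. You propose to ``take the image of $B$ to be a copy of $B$ itself'' and let the stable letter ``act on this copy by an automorphism extending $\varphi$,'' absorbing obstructions into $\mathfrak{U}$ and $\mathfrak{V}$. But $\varphi$ is an isomorphism between two \emph{different} normal subgroups $H$ and $K$ of $B$, and in general it extends to no automorphism of $B$ (nor of any group assembled only from $B$, $\mathfrak{U}$, $\mathfrak{V}$ by extensions and direct products); this is precisely the reason the HNN-ex\-ten\-sion does not collapse to a semidirect product. The paper resolves this by a much more elaborate construction (Propositions~\ref{ep402} and~\ref{ep403}): first reduce to the case $L=1$ by passing to $\mathbb{E}_{L}$ and using that the kernel of the resulting map is an extension of $L$ by a free group, hence splits, which is where $\mathfrak{V}=\operatorname{Aut}_{\mathbb{E}}(L)\in\mathcal{C}$ enters via Gruenberg's condition; then, for $L=1$, build an infinite chain (or, in the periodic case, a polygon) of copies $B_{i}$ of $B$ glued along $H_{i}=K_{i-1}$, form the tree (or polygonal) product $\mathfrak{P}$, extend by the shift automorphism $\alpha$ to get $\mathfrak{Q}$, send $t\mapsto\alpha$, and only then manufacture the $\mathcal{C}$\nobreakdash-quotient by applying the amalgamated-product result (Proposition~\ref{ep401}, which consumes $\mathfrak{U}\in\mathcal{C}$ through $\operatorname{Aut}_{\mathbb{B}_{K,H}}(HK/K)$) to adjacent pairs and invoking Gruenberg's condition on $M\leqslant\mathfrak{P}\leqslant\mathfrak{Q}$. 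None of this machinery --- the chain of copies, the embeddability of vertex groups in tree/polygonal products (Karrass--Solitar, Allenby--Tang), the splitting of the extension by a free group, the two uses of Gruenberg's condition --- is present or replaceable by the extension-and-direct-product assembly you describe, so the central construction is missing rather than merely unelaborated.
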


Theorem~\ref{et01} admits the~following generalization.

\begin{etheorem}\label{et02}
Suppose that the~group $\mathbb{E} = \langle B,t;\,t^{-1}Ht = K,\,\varphi \rangle$ satisfies~$(*)$ and~$\mathcal{C}$ is~a~root class of~groups closed under taking quotient groups. If~$B$ has a~homomorphism~$\sigma$ onto a~group from~$\mathcal{C}$ acting injectively on~the~subgroup~$HK$\textup{,} then the~following statements hold.

\textup{1.\hspace{1ex}}The~condition $\mathfrak{U},\,\mathfrak{V} \in \mathcal{C}$ is~equivalent to~the~existence of~a~homomorphism of~$\mathbb{E}$ onto~a~group from~$\mathcal{C}$ that extends~$\sigma$.

\textup{2.\hspace{1ex}}If $\mathfrak{U},\,\mathfrak{V} \in \mathcal{C}$ and~$B$ is~residually a~$\mathcal{C}$\nobreakdash-group\textup{,} then $\mathbb{E}$ is~also residually a~$\mathcal{C}$\nobreakdash-group.
\end{etheorem}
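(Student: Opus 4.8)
My plan is to reduce Theorem~\ref{et02} to Theorem~\ref{et01} by passing to an auxiliary HNN-extension. Given the homomorphism $\sigma\colon B \to B\sigma$ with $B\sigma \in \mathcal{C}$ acting injectively on $HK$, I would first observe that $\sigma|_{H}$, $\sigma|_{K}$, and $\sigma|_{L}$ are all injective, so that the isomorphism $\varphi\colon H \to K$ induces an isomorphism $\bar\varphi\colon H\sigma \to K\sigma$ via $(h\sigma)\bar\varphi = (h\varphi)\sigma$ for $h \in H$. Since $H, K \trianglelefteq B$, their images $H\sigma, K\sigma$ are normal in $B\sigma$, and $(H \cap K)\sigma = H\sigma \cap K\sigma = L\sigma$ is $\bar\varphi$-invariant because $L$ is $\varphi$-invariant. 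Hence the HNN-extension $\overline{\mathbb{E}} = \langle B\sigma, \bar t;\, \bar t^{-1}(H\sigma)\bar t = K\sigma,\, \bar\varphi \rangle$ is defined and satisfies $(*)$, with base group $B\sigma \in \mathcal{C}$. There is a natural surjection $\rho\colon \mathbb{E} \to \overline{\mathbb{E}}$ extending $\sigma$ and sending $t \mapsto \bar t$.

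The key technical point is to identify the subgroups $\overline{\mathfrak{U}}, \overline{\mathfrak{V}} \leqslant \operatorname{Aut}(H\sigma)$, $\operatorname{Aut}(L\sigma)$ attached to $\overline{\mathbb{E}}$ with the corresponding subgroups $\mathfrak{U}, \mathfrak{V}$ attached to $\mathbb{E}$. Because $\sigma$ restricts to an isomorphism $H \to H\sigma$ that conjugates $B$-inner automorphisms of $H$ to $B\sigma$-inner automorphisms of $H\sigma$ (using normality of $H$ and $\ker\sigma \cap H = 1$), conjugation by $\sigma|_H$ carries $\mathfrak{H} = \operatorname{Aut}_B(H)$ isomorphically onto $\overline{\mathfrak{H}} = \operatorname{Aut}_{B\sigma}(H\sigma)$, and likewise $\mathfrak{K}$ onto $\overline{\mathfrak{K}}$ and $\mathfrak{F} = \operatorname{sgp}\{\varphi|_L\}$ onto $\overline{\mathfrak{F}} = \operatorname{sgp}\{\bar\varphi|_{L\sigma}\}$; consequently $\mathfrak{U} \cong \overline{\mathfrak{U}}$ and $\mathfrak{V} \cong \overline{\mathfrak{V}}$. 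Thus $\mathfrak{U}, \mathfrak{V} \in \mathcal{C}$ if and only if $\overline{\mathfrak{U}}, \overline{\mathfrak{V}} \in \mathcal{C}$ (using that $\mathcal{C}$ is closed under subgroups and quotients, hence under isomorphism).

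With this identification in hand, statement~1 follows from Theorem~\ref{et01} applied to $\overline{\mathbb{E}}$: the inclusions $\overline{\mathfrak{U}}, \overline{\mathfrak{V}} \in \mathcal{C}$ are equivalent to the existence of a homomorphism $\tau\colon \overline{\mathbb{E}} \to C$ with $C \in \mathcal{C}$ injective on $B\sigma$; composing with $\rho$ gives a homomorphism $\mathbb{E} \to C$ extending $\sigma$ (since $\rho$ extends $\sigma$ and $\tau$ is injective on $B\sigma$, the composite is injective on $B$ precisely when $\sigma$ is, but what we need is only that it \emph{extends} $\sigma$, which is immediate). Conversely, a homomorphism of $\mathbb{E}$ onto a $\mathcal{C}$-group extending $\sigma$ is in particular injective on $B$, hence forces $\mathfrak{U}, \mathfrak{V} \in \mathcal{C}$ by the easy direction of Theorem~\ref{et01} — here I should double-check that Theorem~\ref{et01}'s ``$1 \Rightarrow 2$'' direction does not actually require $B \in \mathcal{C}$ in its proof, or else argue directly. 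For statement~2, assume additionally $B$ is residually a $\mathcal{C}$-group. The plan is the standard filtration/second-level argument: for each nontrivial $g \in \mathbb{E}$, reduce $g$ to a cyclically reduced form and use residual $\mathcal{C}$-ness of $B$ together with normality of $H, K$ to produce a homomorphism $\sigma'\colon B \to B\sigma'$ onto a $\mathcal{C}$-group, injective on $HK$, such that the image of $g$ (or of the relevant syllable) survives; then apply statement~1 to the associated $\overline{\mathbb{E}}'$ to extend $\sigma'$ to $\mathbb{E}$, obtaining a $\mathcal{C}$-quotient of $\mathbb{E}$ separating $g$ from $1$. The main obstacle is exactly this last step: arranging the family of homomorphisms of $B$ to be simultaneously injective on $HK$, to map $g$ to a nontrivial element, and (for syllable length $\geqslant 1$) to keep the reduced structure of $g$ visible in $\overline{\mathbb{E}}'$ — this is where one needs the hypothesis $\mathfrak{U}, \mathfrak{V} \in \mathcal{C}$ to re-enter, ensuring the extended homomorphism lands in $\mathcal{C}$, and where the combinatorics of HNN normal forms (Britton's lemma) must be handled carefully.
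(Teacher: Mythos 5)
Your treatment of Statement~1 is essentially the paper's own argument: pass to the induced HNN-extension of $B\sigma \cong B/\ker\sigma$, check that its associated automorphism groups lie in~$\mathcal{C}$ (the paper only needs that they are \emph{quotients} of $\mathfrak{U}$ and~$\mathfrak{V}$, via Proposition~\ref{ep404}, rather than your stronger isomorphism claim, though the latter is also true here since $\sigma$ is injective on $H$, $K$, and~$L$), apply Theorem~\ref{et01} to that quotient, and compose with the natural map $\rho_{Q}$. The point you flag in the converse direction resolves exactly as you suspect: the implication $1\Rightarrow2$ of Theorem~\ref{et01} is Proposition~\ref{ep311}, which requires only a homomorphism of~$\mathbb{E}$ onto a $\mathcal{C}$-group acting injectively on $H$ and~$K$ --- neither $B\in\mathcal{C}$ nor injectivity on all of~$B$. (Beware: a homomorphism extending~$\sigma$ need \emph{not} be injective on~$B$, contrary to what you wrote, since $\sigma$ is only assumed injective on~$HK$; but injectivity on $H$ and~$K$ is all that is used.)

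The genuine gap is in Statement~2. Your plan --- for each $g\ne1$ to find $\sigma'\colon B\to B\sigma'\in\mathcal{C}$ injective on~$HK$ that also keeps the reduced structure of~$g$ visible in the quotient HNN-extension --- cannot work under the stated hypotheses. Preserving reducedness of a form $x_{0}t^{\varepsilon_{1}}x_{1}\cdots$ means sending syllables $x_{i}\notin H$ (resp.\ $\notin K$) outside $H\sigma'$ (resp.\ $K\sigma'$), i.e.\ it requires $H$ and~$K$ to be $\mathcal{C}$-separable in~$B$, equivalently $B/H$ and $B/K$ to be residually $\mathcal{C}$-groups; this is not assumed in Statement~2 (it only enters in Theorems~\ref{et03}--\ref{et06}). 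Moreover, even a nontrivial image in the quotient HNN-extension need not survive the further homomorphism onto a $\mathcal{C}$-group, since that homomorphism is only injective on the base. The paper avoids all of this: Statement~2 is immediate from Statement~1 together with Proposition~\ref{ep310}, whose proof (in the cited source) analyzes the kernel~$N$ of the \emph{single} homomorphism supplied by Statement~1 --- since $N\cap H=1=N\cap K$, Bass--Serre theory (cf.\ Proposition~\ref{ep309}) describes~$N$ in terms of conjugates of $N\cap B$ and a free group, the residual $\mathcal{C}$-ness of~$B$ transfers to~$N$, and Gruenberg's condition finishes. You need either to invoke such a result or to carry out that kernel analysis; the syllable-by-syllable filtration is the wrong tool here.
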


Let~us note that the~assertions like Theorem~\ref{et02} can be~useful for~proving new first-lev\-el results (see, e.g.,~\cite{SokolovTumanova2023SMJ}).

\begin{ecorollary}\label{ec01}
Suppose that the~group $\mathbb{E} = \langle B,t;\,t^{-1}Ht = K,\,\varphi \rangle$ satisfies~$(*)$\textup{,} $\mathcal{C}$ is~a~root class of~groups closed under taking quotient groups\textup{,} and~the~subgroups~$H$ and~$K$ are~finite. Then $\mathbb{E}$ is~residually a~$\mathcal{C}$\nobreakdash-group if and~only if $B$ is~residually a~$\mathcal{C}$\nobreakdash-group and~$\mathfrak{U},\,\mathfrak{V} \in \mathcal{C}$.
\end{ecorollary}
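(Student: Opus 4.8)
The plan is to deduce Corollary~\ref{ec01} from Theorem~\ref{et02} together with elementary observations about finite associated subgroups. First I would prove the ``only if'' direction. If $\mathbb{E}$ is residually a $\mathcal{C}$\nobreakdash-group, then so is its subgroup~$B$, since the class of groups residually in~$\mathcal{C}$ is subgroup-closed (indeed $\mathcal{C}$ itself is subgroup-closed, so a separating family of homomorphisms for~$\mathbb{E}$ restricts to one for~$B$). For the inclusions $\mathfrak{U},\mathfrak{V} \in \mathcal{C}$: since $H$ and~$K$ are finite, so are $\operatorname{Aut}H$ and~$\operatorname{Aut}L$, hence $\mathfrak{U} \leqslant \operatorname{Aut}H$ and $\mathfrak{V} \leqslant \operatorname{Aut}L$ are finite groups. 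The key point is that a finite subgroup of a group residually in~$\mathcal{C}$ must itself belong to~$\mathcal{C}$: a finite group that is residually a $\mathcal{C}$\nobreakdash-group embeds, via finitely many separating homomorphisms, into a finite direct product of $\mathcal{C}$\nobreakdash-groups, and such a product lies in~$\mathcal{C}$ (a root class is closed under finite direct products, being closed under extensions, or alternatively as a special case of unrestricted wreath/direct products), so by subgroup-closedness the finite group is in~$\mathcal{C}$. Thus it suffices to exhibit $\mathfrak{U}$ and $\mathfrak{V}$ as finite subgroups of groups that are residually in~$\mathcal{C}$; but in fact there is a cleaner route --- I would instead invoke Theorem~\ref{et01}, whose hypothesis ``$B \in \mathcal{C}$'' is not available here, so I must be slightly more careful. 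The honest argument is: residual $\mathcal{C}$\nobreakdash-ness of~$\mathbb{E}$ restricts to residual $\mathcal{C}$\nobreakdash-ness of~$B$ as noted, and separately one shows directly that residual $\mathcal{C}$\nobreakdash-ness of~$\mathbb{E}$ forces $\mathfrak U,\mathfrak V\in\mathcal C$ --- for this, note that $\mathfrak U$ and $\mathfrak V$ are images of subgroups of~$\mathbb E$ (namely, $\mathfrak U$ is generated by the actions on~$H$ of conjugation by elements of~$B$ and by~$t$, all of which are realized by conjugation inside~$\mathbb{E}$, and similarly for~$\mathfrak V$ acting on~$L$), so they are finite sections of~$\mathbb{E}$, hence residually in~$\mathcal{C}$, hence in~$\mathcal{C}$ by finiteness.

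Next I would prove the ``if'' direction, which is where Theorem~\ref{et02} does the real work. Assume $B$ is residually a $\mathcal{C}$\nobreakdash-group and $\mathfrak{U},\mathfrak{V} \in \mathcal{C}$. To apply Theorem~\ref{et02} I need a homomorphism $\sigma$ of~$B$ onto a $\mathcal{C}$\nobreakdash-group acting injectively on the finite subgroup $HK$. Since $HK$ is finite (it is a product of two finite normal subgroups, hence a finite subgroup of~$B$), its finitely many non-trivial elements can each be separated from~$1$ by a homomorphism of~$B$ onto a $\mathcal{C}$\nobreakdash-group; taking the diagonal of these finitely many homomorphisms and passing to the image gives a homomorphism $\sigma$ of~$B$ onto a group in~$\mathcal{C}$ (again using closure of the root class under finite direct products and quotients) that is injective on~$HK$. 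Now Theorem~\ref{et02}(2) applies verbatim: its hypotheses are exactly that $\mathbb{E}$ satisfies~$(*)$, that $\mathcal{C}$ is a quotient-closed root class, that such a $\sigma$ exists, that $\mathfrak{U},\mathfrak{V} \in \mathcal{C}$, and that $B$ is residually a $\mathcal{C}$\nobreakdash-group --- and its conclusion is that $\mathbb{E}$ is residually a $\mathcal{C}$\nobreakdash-group. This completes the proof.

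The only genuinely delicate point, and the one I would state as a separate lemma if it is not already available, is the assertion that a finite group which is residually a $\mathcal{C}$\nobreakdash-group belongs to~$\mathcal{C}$; everything else is bookkeeping. This rests on two facts about a root class~$\mathcal{C}$: it is closed under taking subgroups (given), and it is closed under finite direct products (which follows from condition~(2) or~(3) in the definition of a root class, since a finite direct product is a subproduct of an unrestricted wreath product, or a finite iterated extension). Granting this, a finite group $G$ that is residually a $\mathcal{C}$\nobreakdash-group injects into $\prod_{i=1}^{n} G_i$ with $G_i \in \mathcal{C}$, where the $n$ homomorphisms are chosen to separate the finitely many non-identity elements of~$G$; hence $G \in \mathcal{C}$. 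I expect no real obstacles: the substance is entirely carried by Theorem~\ref{et02}, and the finiteness of $H$ and $K$ is used only to make $HK$, $\operatorname{Aut}H \supseteq \mathfrak{U}$, and $\operatorname{Aut}L \supseteq \mathfrak{V}$ finite, so that the general machinery specializes to a clean ``if and only if''.
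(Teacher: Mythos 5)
Your sufficiency argument coincides with the paper's: since $HK$ is finite, Proposition~\ref{ep303} yields a homomorphism $\sigma$ of $B$ onto a $\mathcal{C}$\nobreakdash-group acting injectively on $HK$, and Statement~2 of Theorem~\ref{et02} then gives the residual $\mathcal{C}$\nobreakdash-ness of $\mathbb{E}$. The claim that $B$ inherits residual $\mathcal{C}$\nobreakdash-ness from $\mathbb{E}$ is also fine (Proposition~\ref{ep302}), as is your auxiliary lemma that a finite residually $\mathcal{C}$\nobreakdash-group lies in $\mathcal{C}$.

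The gap is in your derivation of $\mathfrak{U},\,\mathfrak{V} \in \mathcal{C}$ from the residual $\mathcal{C}$\nobreakdash-ness of $\mathbb{E}$. You correctly note that $\mathfrak{U}$ and $\mathfrak{V}$ are finite and are \emph{quotients} of subgroups of $\mathbb{E}$ (they have the form $\operatorname{Aut}_{X}(Y) \cong X/\mathcal{Z}_{X}(Y)$ with $X \leqslant \mathbb{E}$), but the inference ``finite section of a residually $\mathcal{C}$\nobreakdash-group, hence residually a $\mathcal{C}$\nobreakdash-group'' is false in general: residual $\mathcal{C}$\nobreakdash-ness does not pass to quotients (for instance, $\mathbb{Z}$ is residually a finite $p$\nobreakdash-group, while its finite quotient $\mathbb{Z}/q\mathbb{Z}$ with $q \ne p$ prime is not). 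The conclusion is true, but it requires one of two specific arguments available in the paper: either apply Proposition~\ref{ep303} to the finite subgroup $HK$ of $\mathbb{E}$ to obtain a homomorphism of $\mathbb{E}$ onto a $\mathcal{C}$\nobreakdash-group injective on $H$ and $K$ and then invoke Proposition~\ref{ep311} (which rests on Proposition~\ref{ep304}) --- this is the route the paper takes in the corollary's proof; or use the commutator argument of Proposition~\ref{ep305}, which shows that the centralizer of a normal subgroup is $\mathcal{C}$\nobreakdash-separable, so that $\operatorname{Aut}_{X}(Y) \cong X/\mathcal{Z}_{X}(Y)$ genuinely is residually a $\mathcal{C}$\nobreakdash-group whenever $X$ is (this is Proposition~\ref{ep312}), after which finiteness finishes. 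Either patch is short, but as written you assert the key step without a valid justification.
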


Theorem~\ref{et01} requires that $B$ belongs to~$\mathcal{C}$. Theorem~\ref{et02} removes this restriction, but~still implicitly assumes that $H,\,K \in \mathcal{C}$. Now let~us consider the~case where $B$, $H$, and~$K$ are~arbitrary residually $\mathcal{C}$\nobreakdash-groups. We~start with~some necessary conditions for~$\mathbb{E}$ to~be~residually a~$\mathcal{C}$\nobreakdash-group.

\begin{etheorem}\label{et03}
Suppose that the~group $\mathbb{E}\kern-.5pt{} =\kern-.5pt{} \langle B,t;\,t^{-1}Ht\kern-.5pt{} =\kern-.5pt{} K,\,\varphi \rangle$ satisfies~$(*)$ and~$\mathcal{C}$~is~a~class of~groups closed under taking subgroups\textup{,} quotient groups\textup{,} and~direct products of~a~finite number of~factors. Suppose also that at~least one of~the~following statements holds\textup{:}
\begin{list}{}{\topsep=0pt\itemsep=0pt\labelsep=1ex\labelwidth=3ex\leftmargin=\parindent\addtolength{\leftmargin}{\labelsep}\addtolength{\leftmargin}{\labelwidth}}
\item[\textup{(\makebox[1.25ex]{$a$})}]$\mathfrak{U} = \mathfrak{H}$ or~$\mathfrak{U} = \mathfrak{K}$\textup{;}

\item[\textup{(\makebox[1.25ex]{$b$})}]$H$ and~$K$ satisfy a~non-trivi\-al identity\textup{;}

\item[\textup{(\makebox[1.25ex]{$c$})}]$\mathfrak{H}$ satisfies a~non-trivi\-al identity\textup{;}

\item[\textup{(\makebox[1.25ex]{$d$})}]$\mathfrak{K}$ satisfies a~non-trivi\-al identity.
\end{list}
If $\mathbb{E}$ is~residually a~$\mathcal{C}$\nobreakdash-group\textup{,} then the~quotient groups~$B/H$ and~$B/K$ have the~same property.
\end{etheorem}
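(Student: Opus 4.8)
The plan is to show that if $\mathbb{E}$ is residually a $\mathcal{C}$-group, then a suitable homomorphism onto a $\mathcal{C}$-group can be post-composed and restricted so as to produce a homomorphism of $B$ onto a $\mathcal{C}$-group with kernel contained in $H$ (and, symmetrically, in $K$). The starting observation is that in an HNN-extension the stable letter conjugates $H$ onto $K$, so for any $h \in H$ one has $t^{-1}ht = h\varphi \in K$; iterating, $t^{-n}ht^{n} = h\varphi^{n}$ for all $n$ (whenever this makes sense, i.e.\ while the image stays in $H$). When $H$ and $K$ are normal in $B$, conjugation by elements of $B$ induces the automorphisms in $\mathfrak{H}$ and $\mathfrak{K}$ of $H$, and conjugation by $t$ together with $\varphi$ ties these to $\mathfrak{U}$. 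The key point to exploit is that if $\nu\colon\mathbb{E}\to Q$ is a homomorphism onto a $\mathcal{C}$-group that does \emph{not} kill some fixed $h\in H\setminus\{1\}$, then $\nu$ restricted to $B$ has kernel $N\cap B$ where $N=\ker\nu$; one wants to arrange, by varying $h$ and using the structural hypotheses (a)--(d), that $N\cap B\leqslant H$.

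First I would fix a nontrivial element $g\in H$ and choose, by residual $\mathcal{C}$-ness, a normal subgroup $N\trianglelefteq\mathbb{E}$ with $\mathbb{E}/N\in\mathcal{C}$ and $g\notin N$. Set $M = N\cap B$, so $B/M\in\mathcal{C}$ (as $\mathcal{C}$ is subgroup- and quotient-closed). The goal is $M\leqslant H$. Since $t^{-1}Ht=K$ inside $\mathbb{E}$ and $N$ is normal, $N$ is invariant under conjugation by $t$, hence $M\varphi^{\pm1}$ makes sense on $M\cap H$ and $M\cap K$ and these are again contained in $M$; more precisely, $t^{-1}(M\cap H)t = (M\cap H)\varphi \leqslant M\cap K$ and symmetrically. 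Because $H,K\trianglelefteq B$, the subgroup $M\cap H$ is normalized by all of $\mathfrak{H}$ and $M\cap K$ by all of $\mathfrak{K}$; applying $\varphi^{\pm1}$ transports these so that $M\cap H$ is in fact normalized by $\mathfrak{U}=\operatorname{sgp}\{\mathfrak{H},\mathfrak{K}\}$ (here one uses the defining relation $t^{-1}ht=h\varphi$ to see that $\mathfrak{U}$-conjugates of $M\cap H$ are conjugates inside $\mathbb{E}/N$, hence still lie in $M$). Now suppose, for contradiction, that $M\not\leqslant H$, i.e.\ there is $b\in M\setminus H$. Under each of the hypotheses (a)--(d) I would derive that $g\in N$, contradicting the choice of $N$.

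The mechanism in each case is the same: one produces, starting from $b\in M\setminus H$ and $g\in H$, a relator in $\mathbb{E}$ that forces $g$ into $N$. Consider the images $\bar b,\bar g$ in $\mathbb{E}/N$ and the image $\bar t$ of the stable letter. Conjugating $\bar g$ repeatedly by $\bar t$ gives $\bar g,\ \bar g\varphi,\ \bar g\varphi^{2},\dots$ as long as the iterates remain in $H$; the hypotheses are designed exactly so that the orbit of $g$ under $\langle\varphi\rangle$, together with the action of $b$ (which via normality of $H$ in $B$ gives an automorphism of $H$ lying in $\mathfrak{H}$), generates a subgroup on which the relevant group of automorphisms acts in a controlled way. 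In case (a), $\mathfrak{U}=\mathfrak{H}$ (or $=\mathfrak{K}$) means the $\varphi$-action and the $B$-conjugation action on $H$ coincide up to $\mathfrak{H}$, so conjugation by $t$ can be simulated by conjugation by an element of $B$; then $tb^{-1}$ (for suitable $b\in B$) centralizes $H$ in $\mathbb{E}/N$ modulo $\mathfrak{H}$, and pushing $b\in M$ through yields that $g$, being in $H$, is conjugated to itself by something in $N$, forcing $g\in N$. In cases (b)--(d) one instead uses that a nontrivial identity $w\equiv1$ holding in $H$ (resp.\ in $\mathfrak{H}$, resp.\ in $\mathfrak{K}$) collapses a would-be free or wreath-type subgroup generated by $g$ and its $t$-conjugates: normality of $H$ in $\mathbb{E}/N$ combined with $b\in M$, $b\notin H$, would otherwise let one build inside $\mathbb{E}/N$ a copy of a subgroup violating that identity, so the identity forces $g$ to have trivial image.

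The main obstacle, as I see it, is the bookkeeping in cases (b)--(d): making precise the claim that ``$b\in M\setminus H$ plus the HNN-relation lets one embed an identity-violating subgroup'' requires identifying explicitly the subgroup of $\mathbb{E}/N$ generated by $\bar g$ and its conjugates by powers of $\bar b\bar t^{\pm1}$, showing it is (a quotient of) an unrestricted or restricted wreath product $H\wr\mathbb{Z}$ or a suitable extension, and then invoking that such a wreath product does not satisfy any nontrivial identity unless the top or bottom group is trivial. The case analysis should be organized so that (a) is handled first and cleanly by the ``$\mathfrak{U}=\mathfrak{H}$'' collapse, after which (b)--(d) are variations on a single lemma about identities in semidirect products $H\rtimes\langle b,t\rangle$. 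Throughout, the symmetric statement for $K$ and $B/K$ follows either by the same argument with the roles of $H$ and $K$ interchanged, or simply by applying the automorphism of $\mathbb{E}$ sending $t\mapsto t^{-1}$ (which swaps $H$ and $K$ and replaces $\varphi$ by $\varphi^{-1}$, preserving hypothesis $(*)$ and each of (a)--(d)).
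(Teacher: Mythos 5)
Your reduction aims at a target that is both stronger than needed and false. You fix a single non-trivial $g \in H$, choose one $N \in \mathcal{C}^{*}(\mathbb{E})$ with $g \notin N$, and set out to prove $M = N \cap B \leqslant H$. If that held, $B/H$ would be a quotient of the $\mathcal{C}$-group $B/M$ and hence would itself belong to $\mathcal{C}$, which is much more than the residual $\mathcal{C}$-ness asserted by the theorem and is impossible in general: take $\mathcal{C}$ to be the class of finite groups, $B = \mathbb{Z} \times H$ with $H = K$ finite and $\varphi = \mathrm{id}$. All hypotheses of the theorem hold and $\mathbb{E}$ is residually finite, but $N \cap B \leqslant H$ together with $B/(N \cap B) \in \mathcal{C}$ would force the infinite group $B$ to be finite. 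The conclusion $B/H \cong \mathbb{Z}$ is residually finite nonetheless. Your contradiction mechanism (from $M \not\leqslant H$ derive $g \in N$) would, if valid, establish for every $N$ the dichotomy ``$N \cap B \leqslant H$ or $H \leqslant N$,'' which also fails since subgroup lattices are not chains. The quantifiers are reversed: what must be shown is that for each $b \in B \setminus H$ there exists an $N$ (depending on $b$) with $b \notin H(N \cap B)$, i.e., the $\mathcal{C}$-separability of $H$ in $B$, equivalently $\overline{H} = H$ where $\overline{H} = \bigcap_{N \in \mathcal{C}^{*}(\mathbb{E})} H(N \cap B)$; by Proposition~\ref{ep301} this is exactly the residual $\mathcal{C}$-ness of $B/H$. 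The element to be separated lies \emph{outside} $H$, not inside it.

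Within the correct framework the paper's argument is quite different from your sketch: case (a) is disposed of by applying a known separability result (Proposition~\ref{ep501}) to the generalized free product $\mathbb{B} = \langle B * B;\,H = K,\,\varphi \rangle$ embedded in $\mathbb{E}$ via $\zeta$; case (b) follows from Proposition~\ref{ep502} after noting that $HK$ satisfies a non-trivial identity; and cases (c), (d) are handled by assuming $\overline{H} \ne H$ or $\overline{K} \ne K$ and exhibiting explicit elements $u_{1}, u_{2}, v$ for which the commutator $[\omega(v,u_{1},u_{2}),v]$ has a reduced form of positive length (hence is non-trivial by Britton's lemma) yet lies in every $N \in \mathcal{C}^{*}(\mathbb{E})$, where $\omega$ is a normalized identity of $\mathfrak{H}$ supplied by Shirvani's lemma; this contradicts the residual $\mathcal{C}$-ness of $\mathbb{E}$. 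Your wreath-product heuristic for (b)--(d) gestures at the same phenomenon, but without the separability formulation, the subgroups $\overline{H}$, $\overline{K}$, $\overline{L}$, and the reduced-form bookkeeping, it does not assemble into a proof; and your treatment of case (a) (``$tb^{-1}$ centralizes $H$ modulo $\mathfrak{H}$'') is not a workable substitute for the free-product separability result actually used.
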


Let~us note that the~papers~\cite{KuvaevSokolov2017IVM, Sokolov2021SMJ1} contain several more necessary conditions for~the residual $\mathcal{C}$\nobreakdash-ness of~HNN-ex\-ten\-sions, which are~similar to~Theorem~\ref{et03}.

Given a~class of~groups~$\mathcal{C}$ and~a~group~$X$, we~denote by~$\mathcal{C}^{*}(X)$ the~family of~normal subgroups of~$X$ defined as~follows: $N \in \mathcal{C}^{*}(X)$ if and~only if $X/N \in \mathcal{C}$. Let~us say that $X$ is~\emph{$\mathcal{C}$\nobreakdash-qua\-si-reg\-u\-lar} with~respect to~its subgroup~$Y$ if, for~each subgroup $M \in \mathcal{C}^{*}(Y)$, there exists a~subgroup $N \in \mathcal{C}^{*}(X)$ such that $N \cap Y \leqslant M$. The~property of~$\mathcal{C}$\nobreakdash-quasi-regularity is~closely related to~the~classical notion of~a~$\mathcal{C}$\nobreakdash-sep\-a\-ra\-ble subgroup~\cite{Malcev1958PIvPI} and~plays an~important role in~constructing the~kernels of~the~homomorphisms that map a~free construction of~groups onto~the~groups from~$\mathcal{C}$. Therefore, it~is~often a~part of~conditions sufficient for~such a~construction to~be~residually a~$\mathcal{C}$\nobreakdash-group (see, e.g.,~\cite{KimTang1999CMB, SokolovTumanova2016SMJ, SokolovTumanova2023SMJ}). In~\cite{Sokolov2023JGT, BaranovSokolov2025SMJ, Sokolov2025IVM}, a~number of~situations are~described in~which a~group turns~out to~be~$\mathcal{C}$\nobreakdash-quasi-regular with~respect to~its subgroup.

\begin{etheorem}\label{et04}
Suppose that the~group $\mathbb{E} = \langle B,t;\,t^{-1}Ht = K,\,\varphi \rangle$ satisfies~$(*)$\textup{,} $\mathcal{C}$ is~a~root class of~groups closed under taking quotient groups\textup{,} and~at~least one of~the~following statements holds\textup{:}
\begin{list}{}{\topsep=0pt\itemsep=0pt\labelsep=1ex\labelwidth=3.2ex\leftmargin=\parindent\addtolength{\leftmargin}{\labelsep}\addtolength{\leftmargin}{\labelwidth}}
\item[$(\alpha)$]$H/L \in \mathcal{C}$\textup{;}
\item[$(\beta)$]there exists a~homomorphism of~$B$ onto~a~group from~$\mathcal{C}$ acting injectively on~$L$.
\end{list}
Suppose also that $\mathfrak{U},\,\mathfrak{V} \in \mathcal{C}$ and~$B$ is~$\mathcal{C}$\nobreakdash-qua\-si-reg\-u\-lar with~respect to~$HK$. If~$B/H$ and~$B/K$ are~residually $\mathcal{C}$\nobreakdash-groups\textup{,} then $\mathbb{E}$ and~$B$ are~residually $\mathcal{C}$\nobreakdash-groups simultaneously.{\parfillskip=0pt{}\par}
\end{etheorem}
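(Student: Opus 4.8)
The plan is to reduce the residual $\mathcal{C}$-ness of $\mathbb{E}$ to the construction of, for each non-trivial element $g \in \mathbb{E}$, a homomorphism $\psi$ of $\mathbb{E}$ onto a $\mathcal{C}$-group with $g\psi \neq 1$. Since $\mathbb{E}$ acts on its Bass--Serre tree, one first disposes of the case where $g$ lies in no conjugate of $B$: here one uses the standard fact (available from the filtration approach of \cite{Sokolov2021SMJ1}) that such a $g$ survives in a suitable quotient HNN-extension, so it suffices to build homomorphisms onto HNN-extensions that themselves satisfy the hypotheses of Theorem~\ref{et01} or Theorem~\ref{et02}. Thus the real work concentrates on elements $g$ conjugate into $B$, which after replacing $g$ by a conjugate we may take to lie in $B \setminus \{1\}$. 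The ``second-level'' idea is then to find a homomorphism $\sigma$ of $B$ onto a $\mathcal{C}$-group that is injective on $HK$, does not kill $g$, and is compatible with passing to the quotients $B/H$ and $B/K$; Theorem~\ref{et02}.2 would then finish the argument, since $\mathfrak{U},\mathfrak{V} \in \mathcal{C}$ is assumed and $B$ residually $\mathcal{C}$ will follow from $B/H$, $B/K$ residually $\mathcal{C}$ together with the structure of $HK$ inside $B$.

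First I would assemble the separating data on $B$. Because $B/H$ and $B/K$ are residually $\mathcal{C}$, there are subgroups $P \in \mathcal{C}^{*}(B)$ with $H \leqslant P$ and $Q \in \mathcal{C}^{*}(B)$ with $K \leqslant Q$; by closure of $\mathcal{C}$ under subgroups and (unrestricted, hence finite) direct products, $P \cap Q \in \mathcal{C}^{*}(B)$ and $B/(P\cap Q)$ embeds in $B/H \times B/K$, so in particular $B/(P \cap Q)$ is still residually $\mathcal{C}$ and its image of $HK$ has a controlled structure — it lies in $(P/(P\cap Q)) \cdot (Q/(P\cap Q))$, which maps to $H/(H\cap\text{stuff})$ type factors. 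Next, the $\mathcal{C}$-quasi-regularity of $B$ with respect to $HK$ is used to promote a separating subgroup of $HK$ to one of $B$: concretely, to make $\sigma$ injective on $HK$ I need a subgroup $M \in \mathcal{C}^{*}(HK)$ with $M = 1$ (if $HK \in \mathcal{C}$) or, more realistically, with $M$ avoiding the relevant elements, and quasi-regularity yields $N \in \mathcal{C}^{*}(B)$ with $N \cap HK \leqslant M$. The hypothesis $(\alpha)$ or $(\beta)$ enters precisely at the point of controlling the associated subgroups through $\varphi$: under $(\beta)$ one has directly a homomorphism of $B$ onto a $\mathcal{C}$-group injective on $L = H\cap K$, which combined with the quasi-regularity data lets one separate points of $H$ and of $K$ while respecting $L$; under $(\alpha)$, $H/L \in \mathcal{C}$ plays the same role via the normality of $H$ and the finiteness-type closure properties, letting one pass to $B/L'$ for an appropriate $L' \leqslant L$.

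The crucial compatibility step is to arrange that the finite intersection $\Delta$ of all these normal subgroups of $B$ — the one from quasi-regularity, the ones from $B/H$ and $B/K$ being residually $\mathcal{C}$, the ones separating $g$ — is itself $\varphi$-compatible in the sense needed to induce an HNN-extension (or directly a $\mathcal{C}$-group via Theorem~\ref{et02}). Here is where I would invoke Theorem~\ref{et02}.1: I need the homomorphism $\sigma\colon B \to B/\Delta$ to be injective on $HK$ and to satisfy $\mathfrak{U},\mathfrak{V} \in \mathcal{C}$ for the resulting data; the first is secured by quasi-regularity, and the second transfers from the standing hypothesis because $\mathfrak{U},\mathfrak{V}$ are computed from conjugation in $B$ and from $\varphi|_L$, which survive the quotient appropriately when $\Delta \cap HK$ is small. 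Then Theorem~\ref{et02}.1 produces a homomorphism $\mathbb{E} \to \overline{\mathbb{E}}$ onto a $\mathcal{C}$-group extending $\sigma$, and since $\sigma g = g\Delta \neq 1$ (we chose $\Delta$ to avoid $g$), the element $g$ survives. Finally, for the ``simultaneously'' clause: $\mathbb{E}$ residually $\mathcal{C}$ forces $B$ residually $\mathcal{C}$ because $B$ embeds in $\mathbb{E}$ and $\mathcal{C}$ is subgroup-closed; conversely we have just shown that the residual $\mathcal{C}$-ness of $B/H$ and $B/K$ (which, by Theorem~\ref{et03} under its mild extra hypotheses, is in turn necessary for $\mathbb{E}$ to be residually $\mathcal{C}$, and is here assumed) yields residual $\mathcal{C}$-ness of $\mathbb{E}$, and along the way residual $\mathcal{C}$-ness of $B$ itself follows from that of $B/H$ and $B/K$ via the embedding $B/(P\cap Q) \hookrightarrow B/H \times B/K$ combined with separating the kernel $P\cap Q \leqslant HK$ using quasi-regularity and $(\alpha)$ or $(\beta)$.

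The main obstacle I anticipate is the simultaneous control of $g$, the injectivity on $HK$, and the $\varphi$-invariance of the separating subgroup: each requirement pulls in a different normal subgroup of $B$, and one must check that their intersection still has $\mathcal{C}$-quotient \emph{and} still interacts correctly with $\varphi$ on $L$ so that Theorem~\ref{et02} applies — in particular that shrinking the kernel does not destroy the conditions $\mathfrak{U},\mathfrak{V} \in \mathcal{C}$ or create a mismatch between the images of $H$ and $K$ under $\varphi$. Handling the alternative hypotheses $(\alpha)$ and $(\beta)$ uniformly, or splitting cleanly into two parallel arguments, is the secondary technical point.
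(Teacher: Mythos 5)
Your central step fails: you propose to separate $g \in B \setminus \{1\}$ by building a homomorphism $\sigma$ of $B$ onto a $\mathcal{C}$\nobreakdash-group that is \emph{injective on $HK$} and then invoking Theorem~\ref{et02}. But under the hypotheses of Theorem~\ref{et04} no such $\sigma$ need exist, because $HK$ need not embed into any $\mathcal{C}$\nobreakdash-group: under $(\alpha)$ the subgroup $L$ is unrestricted (take $L$ infinite cyclic and $\mathcal{C}$ the class of finite groups), and under $(\beta)$ only $L$, not $H/L$, is controlled (take $L=1$, $H$ infinite cyclic). The whole point of a second-level result such as Theorem~\ref{et04} is precisely to treat the case where $H$ and $K$ do \emph{not} embed into $\mathcal{C}$\nobreakdash-groups, so Theorem~\ref{et02} cannot be applied to $\mathbb{E}$ itself. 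The paper instead works with quotients: it reduces (via Proposition~\ref{ep601}, the filtration result from~\cite{Sokolov2021SMJ1}, together with the $\mathcal{C}$\nobreakdash-sep\-a\-ra\-bil\-ity of $H$ and $K$ supplied by Proposition~\ref{ep301}) to showing that every $M \in \mathcal{C}^{*}(B)$ contains a normal $(H,K,\varphi)$\nobreakdash-com\-pat\-i\-ble subgroup $Q$ with $B/Q \in \mathcal{C}$ and $HQ/Q \cap KQ/Q = LQ/Q$, and then applies Theorem~\ref{et01} to the quotient HNN-ex\-ten\-sion $\mathbb{E}_{Q}$ (using Proposition~\ref{ep404} to transfer $\mathfrak{U},\mathfrak{V} \in \mathcal{C}$ to $\mathfrak{U}_{Q},\mathfrak{V}_{Q}$). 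Your sketch contains no construction of such compatible subgroups; the key mechanism in the paper is Proposition~\ref{ep604} (inside any $R \in \mathcal{C}^{*}(L)$ or $\mathcal{C}^{*}(H)$ there is an $\mathfrak{V}$\nobreakdash- resp.\ $\mathfrak{U}$\nobreakdash-invariant subgroup $S$ of the same kind, obtained via Remak's theorem and the closure of the root class under unrestricted direct products --- this is exactly where $\mathfrak{U},\mathfrak{V} \in \mathcal{C}$ is used), followed by quasi-regularity and Proposition~\ref{ep307} to realize $S$ (or $S\cdot S\varphi$) as $N \cap HK$ for some $N \in \mathcal{C}^{*}(B)$, and finally $Q = M \cap N$.

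Two further points. Your treatment of elements of $\mathbb{E}$ not conjugate into $B$ (``survives in a suitable quotient HNN-extension'') is exactly what Proposition~\ref{ep601} makes precise, but its hypotheses demand a cofinal family of admissible subgroups and the compatibility condition $(Q \cap H)\varphi = Q \cap K$, which you never verify. And your closing claim that the residual $\mathcal{C}$\nobreakdash-ness of $B$ follows from that of $B/H$ and $B/K$ is both false in general (the intersections $P \cap Q$ you use only shrink down to $L$, so elements of $L$ are never separated this way) and not what the theorem asserts: the conclusion is that $\mathbb{E}$ and $B$ are residually $\mathcal{C}$ \emph{simultaneously}, not that $B$ inherits the property from its quotients.
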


Let~us note that, if~$\mathcal{C}$ is~a~root class of~groups closed under taking quotient groups, while the~HNN-ex\-ten\-sion~$\mathbb{E}$ is~residually a~$\mathcal{C}$\nobreakdash-group and~satisfies~$(*)$, then, by~Proposition~\ref{ep312} below, $U$ and~$V$ are~residually $\mathcal{C}$\nobreakdash-groups and,~therefore, $\mathfrak{H}$, $\mathfrak{K}$, $\mathfrak{L}$, $\mathfrak{F}$, $\mathfrak{U}$, and~$\mathfrak{V}$ belong to~$\mathcal{C}$ when they are~finite. However, in~general, neither the~inclusions $\mathfrak{U} \in \mathcal{C}$ and~$\mathfrak{V} \in \mathcal{C}$, which appear in~Theorems~\ref{et01},~\ref{et02}, and~\ref{et04}, nor the~weaker conditions $\mathfrak{H} \in \mathcal{C}$, $\mathfrak{K} \in \mathcal{C}$, $\mathfrak{L} \in \mathcal{C}$, and~$\mathfrak{F} \in \mathcal{C}$ are~necessary for~$\mathbb{E}$ to~be~residually a~$\mathcal{C}$\nobreakdash-group, as~the~following example shows.

\begin{eexample}
Suppose that
\[
E = \langle a,b,c,t;\,[a,b] = [b,c] = [b,t] = 1,\,c^{-1}ac = t^{-1}at = ab \rangle,
\]
$A = \operatorname{sgp}\{a,b\}$, and~$\varphi$ is~the~automorphism of~$A$ taking~$a$ to~$ab$ and~$b$ to~$b$. Then $E$ is the~HNN-ex\-ten\-sion of~the~group
\[
B = \langle a,b,c;\,[a,b] = [b,c] = 1,\,c^{-1}ac = ab \rangle
\]
with the~coinciding subgroups $H = A = K$ associated according to~$\varphi$. The~group~$B$, in~turn, is~the~extension of~the~free abelian group~$A$ by~the~infinite cyclic group with the~generator~$c$, the~conjugation by~which acts on~$A$ as~$\varphi$. Therefore,
\[
\operatorname{Aut}_{B}(H) = \operatorname{Aut}_{B}(K) = \operatorname{Aut}_{B}(H \cap K) = \operatorname{sgp}\{\varphi|_{H \cap K}\}
\]
is~the~infinite cyclic group generated by~$\varphi$. At~the~same time, $E$ splits as~the~generalized free product of~the~isomorphic polycyclic groups~$B$~and
\[
D = \langle a,b,t;\,[a,b] = [b,t] = 1,\,t^{-1}at = ab \rangle
\]
with the~normal amalgamated subgroup~$A$. Hence, it~is~residually finite by~Theorem~9 from~\cite{BaumslagG1963TAMS1}.
\end{eexample}

The~next two theorems (and~Propositions~\ref{ep605}\nobreakdash--\ref{ep608} in~Section~\ref{es06}) describe some cases where the~condition $\mathfrak{U},\,\mathfrak{V} \in \mathcal{C}$ from~Theorem~\ref{et04} can be~modified or~weakened.

\begin{etheorem}\label{et05}
Suppose that the~group $\mathbb{E} = \langle B,t;\,t^{-1}Ht = K,\,\varphi \rangle$ satisfies~$(*)$\textup{,} $\mathcal{C}$ is~a~root class of~groups closed under taking quotient groups\textup{,} and~at~least one of~Statements~$(\alpha)$ and~$(\beta)$ from~Theorem~\textup{\ref{et04}} holds. Suppose also that $\mathfrak{U} = \mathfrak{H}$ or~$\mathfrak{U} = \mathfrak{K}$\textup{,} $\mathfrak{V} \in \mathcal{C}$\textup{,} and~$B$ is~$\mathcal{C}$\nobreakdash-qua\-si-reg\-u\-lar with~respect to~$HK$. Then $\mathbb{E}$ is~residually a~$\mathcal{C}$\nobreakdash-group if and~only if the~groups~$B$\textup{,} $B/H$\textup{,} and~$B/K$ have the~same property.
\end{etheorem}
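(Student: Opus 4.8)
\textbf{Proof proposal for Theorem~\ref{et05}.}
The plan is to deduce Theorem~\ref{et05} from Theorem~\ref{et04} together with the necessity direction supplied by Theorem~\ref{et03}. Notice that case~$(a)$ of Theorem~\ref{et03} is precisely the hypothesis $\mathfrak{U} = \mathfrak{H}$ or $\mathfrak{U} = \mathfrak{K}$ that we are now assuming. Hence, once we know $\mathbb{E}$ is residually a $\mathcal{C}$\nobreakdash-group, Theorem~\ref{et03} immediately gives that $B/H$ and $B/K$ are residually $\mathcal{C}$\nobreakdash-groups, and Proposition~\ref{ep312} (quoted in the remark after Theorem~\ref{et04}) shows that $B$ itself, being isomorphic to a subgroup of $\mathbb{E}$, inherits residual $\mathcal{C}$\nobreakdash-ness as well. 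So the ``only if'' direction is essentially free.

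For the ``if'' direction, the first and main task is to show that the extra hypotheses of Theorem~\ref{et05} force $\mathfrak{U},\,\mathfrak{V} \in \mathcal{C}$, after which Theorem~\ref{et04} applies verbatim and yields that $\mathbb{E}$ is residually a $\mathcal{C}$\nobreakdash-group. Since $\mathfrak{V} \in \mathcal{C}$ is assumed outright, everything reduces to proving $\mathfrak{U} \in \mathcal{C}$. By hypothesis $\mathfrak{U}$ equals $\mathfrak{H} = \operatorname{Aut}_{B}(H)$ or $\mathfrak{K} = \varphi\operatorname{Aut}_{B}(K)\varphi^{-1}$; by symmetry (interchanging $t$ with $t^{-1}$, which swaps the roles of $H$ and $K$ and replaces $\varphi$ by $\varphi^{-1}$) it suffices to treat the case $\mathfrak{U} = \mathfrak{H}$. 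Now $\operatorname{Aut}_{B}(H)$ is a homomorphic image of $B$ — namely $B/C_{B}(H)$, the quotient by the centralizer of $H$ — so it is residually a $\mathcal{C}$\nobreakdash-group as soon as $B$ is. To upgrade ``residually $\mathcal{C}$'' to ``$\in \mathcal{C}$'' I would show that $\mathfrak{H}$ embeds into a group already known to lie in $\mathcal{C}$: concretely, I expect $\operatorname{Aut}_{B}(H)$ to embed into the automorphism group of $L = H \cap K$ combined with the action on $H/L$, using that $H$ and $K$ are normal in $B$ and $L$ is $\varphi$\nobreakdash-invariant. Under hypothesis $(\alpha)$, $H/L \in \mathcal{C}$; under hypothesis $(\beta)$, the $\mathcal{C}$\nobreakdash-quasi-regularity of $B$ with respect to $HK$ (equivalently, some separability of $L$) lets one control the $L$\nobreakdash-part. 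In either case one bounds $\mathfrak{H}$ by a group built from $\mathfrak{V} \in \mathcal{C}$ and the $\mathcal{C}$\nobreakdash-group $H/L$ (or an auxiliary $\mathcal{C}$\nobreakdash-group coming from $(\beta)$) via subgroups, quotients, and extensions — operations under which a root class closed under quotients is closed.

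The step I expect to be the genuine obstacle is exactly this last one: producing a faithful action of $\mathfrak{H} = \operatorname{Aut}_{B}(H)$ on a set of $\mathcal{C}$\nobreakdash-data so that $\mathfrak{H}$ itself lands in $\mathcal{C}$. The subtlety is that an inner automorphism of $B$ restricted to $H$ can act trivially on both $L$ and $H/L$ without being trivial on $H$ — this is governed by $\operatorname{Hom}(H/L,\,Z(L))$ or a similar cohomological obstruction — so the naive ``restriction plus induced map on the quotient'' homomorphism need not be injective, and I will need the normality of $H$ and $K$ in $B$ (and perhaps the structure imposed by $(*)$ on how conjugation interacts with $\varphi$) to kill or absorb that obstruction. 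I anticipate that the already-established machinery behind Theorem~\ref{et04} — in particular whatever lemma shows $\mathfrak{U},\mathfrak{V}\in\mathcal{C}$ is forced in the residually-$\mathcal{C}$ case, together with Proposition~\ref{ep312} — contains most of what is needed, so the proof of Theorem~\ref{et05} should in the end be a short reduction: verify $\mathfrak{U}\in\mathcal{C}$ by the embedding argument above, invoke Theorem~\ref{et04} for sufficiency, and invoke Theorem~\ref{et03}$(a)$ plus Proposition~\ref{ep312} for necessity.
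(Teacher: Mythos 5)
Your ``only if'' direction is correct: case~$(a)$ of Theorem~\ref{et03} is exactly the hypothesis $\mathfrak{U}=\mathfrak{H}$ or $\mathfrak{U}=\mathfrak{K}$, and the residual $\mathcal{C}$\nobreakdash-ness of~$B$ follows from Proposition~\ref{ep302} (not Proposition~\ref{ep312}, which concerns $\mathfrak{U}$ and~$\mathfrak{V}$ --- a minor misattribution). The ``if'' direction, however, rests on a claim that is false: the hypotheses of Theorem~\ref{et05} do \emph{not} imply $\mathfrak{U}\in\mathcal{C}$, so the reduction to Theorem~\ref{et04} is unavailable. Concretely, let $\mathcal{C}$ be the class of finite groups, let $B_{0}=\langle a,b,c;\,[a,b]=[b,c]=1,\,c^{-1}ac=ab\rangle$ be the group from the paper's Example, and put $B=B_{0}\times Z$ with $Z$ free abelian of rank~$2$, $H=\operatorname{sgp}\{a,b\}$, $K=Z$, and $\varphi$ any isomorphism $H\to K$. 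Then $H$ and~$K$ are normal, $L=1$, so $\mathfrak{V}=1\in\mathcal{C}$ and $(\beta)$ holds trivially; $K$~is central, so $\mathfrak{K}=1$ and $\mathfrak{U}=\mathfrak{H}=\operatorname{Aut}_{B}(H)\cong\mathbb{Z}$; finally $B$ is finitely generated nilpotent, hence $\mathcal{C}$\nobreakdash-qua\-si-reg\-u\-lar with respect to~$HK$, and $B$, $B/H$, $B/K$ are residually finite. Every hypothesis of Theorem~\ref{et05} holds, yet $\mathfrak{U}\notin\mathcal{C}$. This also kills the ``faithful action on $\mathcal{C}$\nobreakdash-data'' you hoped for: here $L=1$, so the cohomological kernel you identify is all of~$\mathfrak{H}$. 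The step you flagged as the genuine obstacle is indeed fatal; the whole point of Theorem~\ref{et05} is that the hypothesis $\mathfrak{U}\in\mathcal{C}$ of Theorem~\ref{et04} is being \emph{replaced}, not re-derived.

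The paper instead reruns the machinery behind Theorem~\ref{et04}, namely Propositions~\ref{ep605} and~\ref{ep606}, under the weaker hypothesis; Theorem~\ref{et05} is then Proposition~\ref{ep605}(\makebox[1.25ex]{$a$}) in case~$(\alpha)$ (note $\mathfrak{F}\leqslant\mathfrak{V}\in\mathcal{C}$ gives $\mathfrak{F}\in\mathcal{C}$) and Proposition~\ref{ep606}(\makebox[1.25ex]{$d$}) in case~$(\beta)$, combined with Proposition~\ref{ep302} and Theorem~\ref{et03} for necessity. The condition $\mathfrak{U}\in\mathcal{C}$ enters that machinery in exactly two places, and both survive the weakening. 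First, to build pre-$\mathcal{C}$-ad\-mis\-si\-ble subgroups one needs $S\in\mathcal{C}^{*}(H)$ with $S\mathfrak{u}=S$ for all $\mathfrak{u}\in\mathfrak{U}$; when $\mathfrak{U}\in\mathcal{C}$ this comes from Statement~2 of Proposition~\ref{ep604} (intersect the $\mathfrak{U}$\nobreakdash-translates), whereas when $\mathfrak{U}=\mathfrak{H}$ one takes $S$ normal in~$B$ and contained in~$H$, which is automatically $\mathfrak{H}$\nobreakdash-invariant and hence $\mathfrak{U}$\nobreakdash-invariant (the computation with $S=P\cap T\cap H$ in case~(\makebox[1.25ex]{$d$}) of Proposition~\ref{ep606}). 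Second, to show that pre-$\mathcal{C}$-ad\-mis\-si\-ble subgroups are $\mathcal{C}$\nobreakdash-ad\-mis\-si\-ble (Proposition~\ref{ep602}) one needs $\mathfrak{U}_{Q}\in\mathcal{C}$ whenever $B/Q\in\mathcal{C}$; here $\mathfrak{H}_{Q},\,\mathfrak{K}_{Q}\in\mathcal{C}$ by Proposition~\ref{ep304}, and since $\mathfrak{H}$ is normal in $\mathfrak{U}=\mathfrak{H}$, the group $\mathfrak{U}_{Q}$ is an extension of $\mathfrak{H}_{Q}$ by a quotient of $\mathfrak{K}_{Q}$ and so lies in~$\mathcal{C}$. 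If you want to repair your write-up, replace the attempted proof of $\mathfrak{U}\in\mathcal{C}$ by these two observations (or simply cite Propositions~\ref{ep605} and~\ref{ep606}).
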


\begin{etheorem}\label{et06}
Suppose that the~group $\mathbb{E} = \langle B,t;\,t^{-1}Ht = K,\,\varphi \rangle$ satisfies~$(*)$ and~$\mathcal{C}$ is~a~root class of~groups consisting only of~periodic groups and~closed under taking quotient groups. If~$H$ and~$K$ are~locally cyclic groups\textup{,} then the~following statements hold.

\textup{1.\hspace{1ex}}Let $B/H$ and~$B/K$ be~residually $\mathcal{C}$\nobreakdash-groups. Then the~group~$H/L$ is~finite if and~only if $(\alpha)$ holds.

\textup{2.\hspace{1ex}}Let $B$ be~residually a~$\mathcal{C}$\nobreakdash-group. Then the~group~$L$ is~finite if and~only if $(\beta)$ holds.

\textup{3.\hspace{1ex}}Suppose that $H/L$ is~finite\textup{,} $\mathfrak{F} \in \mathcal{C}$\textup{,} and~$B$ is~$\mathcal{C}$\nobreakdash-qua\-si-reg\-u\-lar with~respect to~$L$. Then $\mathbb{E}$ is~residually a~$\mathcal{C}$\nobreakdash-group if and~only if the~groups~$B$\textup{,} $B/H$\textup{,} and~$B/K$ have the~same property.

\textup{4.\hspace{1ex}}Suppose that $L$ is~finite and~$B$ is~$\mathcal{C}$\nobreakdash-qua\-si-reg\-u\-lar with~respect to~$HK$. Then $\mathbb{E}$ is residually a~$\mathcal{C}$\nobreakdash-group if and~only if $B$\textup{,} $B/H$\textup{,} and~$B/K$ are~residually $\mathcal{C}$\nobreakdash-groups and~$\mathfrak{F} \in \mathcal{C}$.
\end{etheorem}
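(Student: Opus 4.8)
The plan is to treat this as a corollary of the preceding, more general theorems, using the special structure of locally cyclic groups to convert the abstract hypotheses ($\mathfrak{U},\mathfrak{V}\in\mathcal C$, the quasi-regularity conditions, the inclusions $(\alpha)$/$(\beta)$) into the concrete finiteness statements about $H/L$ and $L$. The first observation I would record is that a locally cyclic group has abelian automorphism group, so $\mathfrak H=\operatorname{Aut}_B(H)$, $\mathfrak K=\varphi\operatorname{Aut}_B(K)\varphi^{-1}$, $\mathfrak L=\operatorname{Aut}_B(L)$, $\mathfrak F=\operatorname{sgp}\{\varphi|_L\}$ and their joins $\mathfrak U$, $\mathfrak V$ all live inside $\operatorname{Aut}H$ (resp.\ $\operatorname{Aut}L$), which is abelian; moreover, since $H$ and $K$ are locally cyclic and normal in $B$, the action of $B$ on each of $H$, $K$, $L$ is by power automorphisms, so $\mathfrak H,\mathfrak K$ (hence $\mathfrak U$) embed in the group of units of the ``profinite-type'' endomorphism ring of $H$, and similarly for $\mathfrak V$. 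This abelian, arithmetic picture is what makes the finiteness dichotomies clean.

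For Statement~1, one direction is immediate: if $(\alpha)$ holds then $H/L\in\mathcal C$, and since $\mathcal C$ consists of periodic groups while $H/L$ is a quotient of the locally cyclic group $H$, membership in $\mathcal C$ together with the structure of locally cyclic periodic groups forces $H/L$ to be finite — more precisely, a periodic locally cyclic group in a root class $\mathcal C$ of periodic groups which is also a quotient of $H$ must be finite because, were it infinite, it would contain a copy of some Prüfer group or an infinite direct sum of cyclic groups, contradicting $\mathcal C$-membership via the closure properties (this is where I expect to lean on a lemma from the paper's preparatory section classifying the locally cyclic groups in $\mathcal C$). Conversely, if $H/L$ is finite then, using that $B/H$ and $B/K$ are residually $\mathcal C$-groups and $\mathcal C$ is a root class closed under quotients, I would separate the finitely many nontrivial elements of $H/L$ from $1$ inside a $\mathcal C$-quotient of $B/H$ and pull back to get $(\alpha)$; the role of the hypotheses on $B/H$, $B/K$ is exactly to supply enough $\mathcal C$-quotients of $B$ to detect the finite group $H/L$. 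Statement~2 is the analogous argument with $L$ in place of $H/L$ and with the single hypothesis that $B$ is residually a $\mathcal C$-group, giving $(\beta)$ when $L$ is finite and forcing $L$ finite when $(\beta)$ holds (a homomorphism of $B$ onto a $\mathcal C$-group injective on $L$ realizes the locally cyclic group $L$ inside a periodic $\mathcal C$-group, hence $L\in\mathcal C$, hence $L$ finite by the same classification).

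For Statements~3 and~4 the strategy is to verify that the hypotheses of Theorem~\ref{et05} (or~\ref{et04}) are met and then invoke it. The key point is that when $H/L$ is finite (Statement~3) or $L$ is finite (Statement~4), the automorphism groups collapse: finiteness of $H/L$ bounds $\mathfrak U$ in terms of $\mathfrak V$ and a finite factor — indeed an automorphism of the locally cyclic $H$ that is trivial on $H/L$ and whose restriction to $L$ lies in $\mathfrak F$ ranges over a group controlled by $\mathfrak F$ and $|H/L|$ — so $\mathfrak F\in\mathcal C$ upgrades to $\mathfrak V\in\mathcal C$ and then, via $\mathfrak U=\mathfrak H$ or checking directly, to the hypothesis of Theorem~\ref{et05}; similarly when $L$ is finite, $\mathfrak V$ is finite, $\mathcal C$-quasi-regularity of $B$ with respect to $HK$ is in hand, and $(\beta)$ holds by Statement~2, so Theorem~\ref{et04} applies once we also know $\mathfrak U\in\mathcal C$, which follows because $\mathfrak U$ is a subgroup of $\operatorname{Aut}H$ that is an extension of a finite group (the part acting nontrivially on $L$) by a subgroup acting trivially on the finite $L$ — I would need the preparatory lemmas to pin down that this $\mathfrak U$ is itself in $\mathcal C$, and this is the step I expect to be fussiest. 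With those verifications, Statement~3 is Theorem~\ref{et05} verbatim, and Statement~4 is Theorem~\ref{et04} plus the observation that $\mathfrak F\in\mathcal C$ is then both necessary (by the remark following Theorem~\ref{et04}, since finiteness of $L$ makes $\mathfrak F$ finite and residual $\mathcal C$-ness of $\mathbb E$ forces finite-order automorphism groups into $\mathcal C$) and, together with finiteness of $L$, sufficient to supply $\mathfrak U,\mathfrak V\in\mathcal C$.

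The main obstacle I anticipate is not any single deep step but the bookkeeping in the reductions for Statements~3 and~4: one must carefully track how the finiteness of $H/L$ or $L$ propagates through the lattice $\mathfrak H,\mathfrak K,\mathfrak U$ and $\mathfrak L,\mathfrak F,\mathfrak V$ inside the abelian automorphism groups of locally cyclic groups, and confirm in each case that exactly the hypotheses of the cited theorem are satisfied — in particular that ``$\mathfrak U=\mathfrak H$ or $\mathfrak U=\mathfrak K$'' can be arranged or bypassed. I would organize the write-up by first proving a lemma on locally cyclic groups and their power-automorphism groups (periodic locally cyclic groups in $\mathcal C$ are finite; an automorphism group of a locally cyclic group that is an extension of a $\mathcal C$-group acting on a finite section by a $\mathcal C$-group is in $\mathcal C$), then dispatching 1 and 2 by the separation arguments above, and finally deducing 3 and 4 from Theorems~\ref{et05} and~\ref{et04} respectively.
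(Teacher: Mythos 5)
Your handling of Statements~1 and~2 is essentially the paper's: membership of a locally cyclic group in $\mathcal{C}$ forces finite exponent (Proposition~\ref{ep308}) and hence finiteness, while conversely the residual $\mathcal{C}$\nobreakdash-ness of~$B/L$ (obtained from that of $B/H$ and $B/K$) resp.\ of~$B$ detects the finite group $H/L$ resp.\ $L$ via Proposition~\ref{ep303}. The necessity directions of Statements~3 and~4 are also fine. The problem is the sufficiency directions of~3 and~4, where your plan is to check the hypotheses of Theorems~\ref{et04}/\ref{et05}; those hypotheses simply do not follow from the hypotheses of Theorem~\ref{et06}, and the claimed ``collapse'' of the automorphism groups is false. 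In Statement~3 the subgroup $L$ need not be finite, so $\mathfrak{L}=\operatorname{Aut}_{B}(L)$ (hence $\mathfrak{V}$) can be an infinite, non-periodic group lying outside~$\mathcal{C}$: take $B$ to be the Baumslag--Solitar group $\mathbb{Z}[1/p]\rtimes\mathbb{Z}$ with $H=K=L=\mathbb{Z}[1/p]$ and $\varphi=\mathrm{id}$; then all hypotheses of Statement~3 hold for the class of finite groups, yet $\mathfrak{V}\cong\mathbb{Z}\notin\mathcal{C}$, and there is also no reason to have $\mathfrak{U}=\mathfrak{H}$ or $\mathfrak{U}=\mathfrak{K}$ in general. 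So neither Theorem~\ref{et04} nor Theorem~\ref{et05} is applicable. Likewise in Statement~4, finiteness of $L$ does make $\mathfrak{V}$ finite, but $\mathfrak{H}\cong B/\mathcal{Z}_{B}(H)$ can be infinite and non-periodic (an automorphism of the locally cyclic $H$ acting trivially on the finite $L$ can still have infinite order), so your claim that $\mathfrak{U}\in\mathcal{C}$ is unjustified and in general false. A secondary omission in Statement~3: you never upgrade $\mathcal{C}$\nobreakdash-quasi-regularity of $B$ with respect to $L$ (the hypothesis given) to quasi-regularity with respect to $HK$ (what the cited theorems require); the paper does this using the finiteness of $HK/L$ together with the residual $\mathcal{C}$\nobreakdash-ness of $B/L$ and Proposition~\ref{ep303}.

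What actually makes Statements~3 and~4 work is not the stated theorems but the finer Propositions~\ref{ep605} and~\ref{ep606}, whose case~(\makebox[1.25ex]{$b$}) is tailored to locally cyclic associated subgroups and dispenses with the conditions $\mathfrak{U},\mathfrak{V}\in\mathcal{C}$ altogether. The key device is Statement~3 of Proposition~\ref{ep604}: for a locally cyclic subgroup $N$ and $R\in\mathcal{C}^{*}(N)$, the subgroup generated by the $q$\nobreakdash-th powers ($q$ the exponent of $N/R$) is a \emph{fully invariant} subgroup of $\mathcal{C}^{*}(N)$ contained in $R$, which supplies the $\varphi$\nobreakdash-invariant and $B$\nobreakdash-invariant kernels that the conditions $\mathfrak{U},\mathfrak{V}\in\mathcal{C}$ would otherwise provide. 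Without an argument of this kind (or a proof that the relevant automorphism groups do lie in $\mathcal{C}$, which they need not), the reductions you propose for Statements~3 and~4 do not go through.
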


Let~us note that the~condition ``$\mathfrak{U} = \mathfrak{H}$ or~$\mathfrak{U} = \mathfrak{K}$'' holds if at~least one of~the~subgroups~$H$ and~$K$ lies in~the~center of~$B$. In~this case, the~subgroup~$L$ is~certainly central in~$B$, whence $\mathfrak{L} = 1$ and~$\mathfrak{V} = \mathfrak{F}$. Therefore, Theorem~\ref{et05} generalizes Theorem~5 from~\cite{SokolovTumanova2023SMJ}. As~a~comment to~Theorem~\ref{et06}, we~also note that, if~$p$ is~a~prime and~$\mathcal{F}_{p}$ is~the~class of~finite $p$\nobreakdash-groups, then every residually $\mathcal{F}_{p}$\nobreakdash-group is~$\mathcal{F}_{p}$\nobreakdash-qua\-si-reg\-u\-lar with~respect to~any of~its locally cyclic subgroups~\cite[Theorem~3]{Sokolov2025IVM}.

Given a~class of~groups $\mathcal{C}$ consisting only of~periodic groups, let~us denote by~$\mathfrak{P}(\mathcal{C})$ the~set of~primes defined as~follows: $p \in \mathfrak{P}(\mathcal{C})$ if and~only if there exists a~$\mathcal{C}$\nobreakdash-group~$Z$ such that $p$ divides the~order of~some element of~$Z$. A~subgroup~$Y$ of~a~group~$X$ is~said to~be~\emph{$\mathfrak{P}(\mathcal{C})^{\prime}$\nobreakdash-iso\-lat\-ed} in~this group if, for~any element $x \in X$ and~for~any prime $q \notin \mathfrak{P}(\mathcal{C})$, it~follows from~the~inclusion $x^{q} \in Y$ that $x \in Y$. Clearly, if~$\mathfrak{P}(\mathcal{C})$ contains all prime numbers, then every subgroup is~$\mathfrak{P}(\mathcal{C})^{\prime}$\nobreakdash-iso\-lat\-ed.

Following~\cite{Sokolov2023JGT}, we~say that

--\hspace{1ex}an~abelian group is~\emph{$\mathcal{C}$\nobreakdash-bound\-ed} if, for~any quotient group~$B$ of~$A$ and~for~any $p \in \mathfrak{P}(\mathcal{C})$, the~$p$\nobreakdash-power torsion subgroup of~$B$ has a~finite exponent and~a~cardinality not~exceeding the~cardinality of~some $\mathcal{C}$\nobreakdash-group;

--\hspace{1ex}a~nilpotent group is~\emph{$\mathcal{C}$\nobreakdash-bound\-ed} if it has a~finite central series with~$\mathcal{C}$\nobreakdash-bound\-ed abelian factors.

It~is~easy to~see that, if~$\mathcal{C}$ is~a~root class of~groups consisting only of~periodic groups, then every finitely generated abelian group is~$\mathcal{C}$\nobreakdash-bound\-ed abelian and,~therefore, all finitely generated nilpotent groups are~$\mathcal{C}$\nobreakdash-bound\-ed nilpotent.

\begin{ecorollary}\label{ec02}
Suppose that the~group $\mathbb{E} = \langle B,t;\,t^{-1}Ht = K,\,\varphi \rangle$ satisfies~$(*)$ and~$\mathcal{C}$~is~a~root class of~groups consisting only of~periodic groups and~closed under taking quotient groups. Suppose also that $B$ is~a~$\mathcal{C}$\nobreakdash-bound\-ed nilpotent group and~at~least one of~Statements~$(\alpha)$ and~$(\beta)$ holds. Finally\textup{,} let at~least one of~the~following statements hold\textup{:}
\begin{list}{}{\topsep=0pt\itemsep=0pt\labelsep=1ex\labelwidth=3ex\leftmargin=\parindent\addtolength{\leftmargin}{\labelsep}\addtolength{\leftmargin}{\labelwidth}}
\item[\textup{(\makebox[1.25ex]{$a$})}]$\mathfrak{U},\,\mathfrak{V} \in \mathcal{C}$\textup{;}

\item[\textup{(\makebox[1.25ex]{$b$})}]$\mathfrak{U} = \mathfrak{H}$ or~$\mathfrak{U} = \mathfrak{K}$\textup{,} and~$\mathfrak{V} \in \mathcal{C}$\textup{;}

\item[\textup{(\makebox[1.25ex]{$c$})}]$H$ and~$K$ are~locally cyclic subgroups and~$\mathfrak{F} \in \mathcal{C}$.
\end{list}
Then $\mathbb{E}$ is~residually a~$\mathcal{C}$\nobreakdash-group if and~only if the~subgroups~$\{1\}$\textup{,} $H$\textup{,} and~$K$ are~$\mathfrak{P}(\mathcal{C})^{\prime}$\nobreakdash-iso\-lat\-ed in~$B$.
\end{ecorollary}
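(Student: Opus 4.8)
The plan is to derive Corollary~\ref{ec02} as a specialization of Theorems~\ref{et04}\nobreakdash--\ref{et06}, using the theory of $\mathcal{C}$-bounded nilpotent groups to verify their hypotheses and to convert the conclusion into the stated isolatedness conditions. The backbone of the argument is as follows. First, since $B$ is a $\mathcal{C}$-bounded nilpotent group, its homomorphic images are again $\mathcal{C}$-bounded nilpotent, so $B$, $B/H$, and $B/K$ all belong to this class. I expect that a $\mathcal{C}$-bounded nilpotent group is residually a $\mathcal{C}$-group if and only if it is $\mathfrak{P}(\mathcal{C})^{\prime}$-isolated in itself in the appropriate sense — more precisely, by the results of~\cite{Sokolov2023JGT} cited in the excerpt, the residual $\mathcal{C}$-ness of a $\mathcal{C}$-bounded nilpotent group $N$ should be equivalent to $\{1\}$ being $\mathfrak{P}(\mathcal{C})^{\prime}$-isolated in $N$. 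Applying this to $N = B$, $N = B/H$, and $N = B/K$ shows that \emph{all three} of $B$, $B/H$, $B/K$ are residually $\mathcal{C}$-groups precisely when $\{1\}$ is $\mathfrak{P}(\mathcal{C})^{\prime}$-isolated in each of them, and an elementary argument with preimages under the quotient maps $B \to B/H$ and $B \to B/K$ translates ``$\{1\}$ is $\mathfrak{P}(\mathcal{C})^{\prime}$-isolated in $B/H$'' into ``$H$ is $\mathfrak{P}(\mathcal{C})^{\prime}$-isolated in $B$,'' and similarly for $K$. Thus the right-hand side of the claimed equivalence is exactly the statement ``$B$, $B/H$, and $B/K$ are residually $\mathcal{C}$-groups.''

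Next I would supply the remaining hypotheses needed to invoke the second-level theorems. The $\mathcal{C}$-quasi-regularity of $B$ with respect to $HK$ (needed in cases~(a) and~(b)) and with respect to $L$ (needed in case~(c)) should follow from the fact that $B$ is $\mathcal{C}$-bounded nilpotent: by the results of~\cite{Sokolov2023JGT, BaranovSokolov2025SMJ, Sokolov2025IVM} cited just before Theorem~\ref{et04}, a $\mathcal{C}$-bounded nilpotent group is $\mathcal{C}$-quasi-regular with respect to every subgroup. In case~(c) I also need $H/L$ to be finite in order to apply part~3 of Theorem~\ref{et06}; here $H$ and $K$ are locally cyclic, so $H/L = H/(H\cap K) \cong HK/K$ is a locally cyclic subgroup of the nilpotent group $B/K$, and one checks using part~1 of Theorem~\ref{et06} together with Statement~$(\alpha)$ or~$(\beta)$ — whichever is assumed — that $H/L$ is in fact finite (if $(\alpha)$ holds this is immediate once one knows a locally cyclic $\mathcal{C}$-group is finite, since $\mathcal{C}$ consists of periodic groups; if $(\beta)$ holds one uses part~1 of Theorem~\ref{et06}, whose hypothesis ``$B/H$ and $B/K$ are residually $\mathcal{C}$-groups'' is available in the forward direction). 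With these checks in hand, case~(a) is Theorem~\ref{et04}, case~(b) is Theorem~\ref{et05}, and case~(c) is part~3 of Theorem~\ref{et06}, each of which yields ``$\mathbb{E}$ is residually a $\mathcal{C}$-group $\iff$ $B$, $B/H$, $B/K$ are residually $\mathcal{C}$-groups.'' Combining with the translation from the first paragraph gives the corollary.

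The main obstacle, I expect, is the careful handling of case~(c) and the logical structure of the ``only if'' direction. In case~(c) the cited theorem (Theorem~\ref{et06}, part~3) presupposes that $H/L$ is finite, but the corollary's hypothesis is only that $H$ and $K$ are locally cyclic and $\mathfrak{F}\in\mathcal{C}$; bridging this gap requires extracting finiteness of $H/L$ from $(\alpha)$ or $(\beta)$ in a way that does not circularly assume the conclusion. The clean route is to prove the equivalence ``$\mathbb{E}$ residually $\mathcal{C}$ $\iff$ ($H$, $K$, $\{1\}$ all $\mathfrak{P}(\mathcal{C})^{\prime}$-isolated in $B$)'' directly: for the forward direction, residual $\mathcal{C}$-ness of $\mathbb{E}$ forces $B$, $B/H$, $B/K$ to be residually $\mathcal{C}$ (as $B$ and the quotients embed appropriately or are images of retracts), and then part~1 of Theorem~\ref{et06} makes $H/L$ finite; for the reverse direction, the isolatedness conditions give residual $\mathcal{C}$-ness of $B$, $B/H$, $B/K$, part~1 of Theorem~\ref{et06} again yields $H/L$ finite, and part~3 then applies. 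A secondary nuisance is verifying the precise form of the ``$\{1\}$ $\mathfrak{P}(\mathcal{C})^{\prime}$-isolated $\iff$ residually $\mathcal{C}$'' criterion for $\mathcal{C}$-bounded nilpotent groups and confirming that the $\mathcal{C}$-quasi-regularity statement we need is genuinely available in the cited references in the generality of an arbitrary root class $\mathcal{C}$ of periodic groups; once those two black boxes are in place, the rest is bookkeeping across the three cases.
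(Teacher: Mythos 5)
Your overall strategy --- translating the $\mathfrak{P}(\mathcal{C})^{\prime}$-isolatedness of~$\{1\}$, $H$, $K$ in~$B$ into the residual $\mathcal{C}$-ness of~$B$, $B/H$, $B/K$ via Propositions~\ref{ep301} and~\ref{ep603}, and then feeding the three cases into Theorems~\ref{et04}--\ref{et06} using the $\mathcal{C}$-quasi-regularity of a $\mathcal{C}$-bounded nilpotent group with respect to all of its subgroups --- is exactly the first of the two routes the paper indicates. However, two of your steps do not go through as written.

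First, the ``only if'' direction in case~(\makebox[1.25ex]{$a$}). Your justification that residual $\mathcal{C}$-ness of~$\mathbb{E}$ forces $B/H$ and~$B/K$ to be residually $\mathcal{C}$ ``as the quotients embed appropriately or are images of retracts'' is not valid: $B/H$ neither embeds in~$\mathbb{E}$ nor is a retract image of it, and this implication is precisely the non-trivial content of Theorem~\ref{et03}, which the paper spends all of Section~\ref{es05} proving. You must invoke Theorem~\ref{et03}; its hypotheses are met here because $B$ is nilpotent, so $H$ and~$K$ satisfy a non-trivial identity (condition~(\makebox[1.25ex]{$b$}) of that theorem). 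This is genuinely needed in case~(\makebox[1.25ex]{$a$}), since Theorem~\ref{et04} only asserts the simultaneity of the residual $\mathcal{C}$-ness of~$\mathbb{E}$ and~$B$ \emph{under the assumption} that $B/H$ and~$B/K$ are already residually $\mathcal{C}$-groups; in cases~(\makebox[1.25ex]{$b$}) and~(\makebox[1.25ex]{$c$}) the cited theorems state full equivalences.

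Second, case~(\makebox[1.25ex]{$c$}) under~$(\beta)$. Part~1 of Theorem~\ref{et06} says that, given $B/H$ and~$B/K$ residually $\mathcal{C}$, the group~$H/L$ is finite \emph{if and only if} $(\alpha)$ holds; it does not let you extract finiteness of~$H/L$ from~$(\beta)$. If only $(\beta)$ holds --- for instance $H \cong \mathbb{Z}$ and $L = 1$ --- then $H/L$ is infinite and Part~3 of Theorem~\ref{et06} is inapplicable. The correct route in that sub-case is Part~2 (which, once $B$ is known to be residually a $\mathcal{C}$-group, converts~$(\beta)$ into finiteness of the locally cyclic group~$L$) followed by Part~4, whose additional hypothesis $\mathfrak{F} \in \mathcal{C}$ is exactly what case~(\makebox[1.25ex]{$c$}) supplies. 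With these two repairs the bookkeeping you describe does complete the proof.
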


Let~us note that the~known results on~the~residual finiteness and~residual $p$\nobreakdash-fi\-nite\-ness of~HNN-ex\-ten\-sions do~not generalize the~assertions that follow from~Theorems~\ref{et04}\nobreakdash--\ref{et06} and~Corollary~\ref{ec02} when $\mathcal{C}$ is~the~class of~all finite groups or~finite $p$\nobreakdash-groups. Thus, these assertions are~also of~interest. Proofs of~the~formulated theorems and~corollaries are~given in~Sections~\ref{es04}\nobreakdash--\ref{es06}.

\section{Some auxiliary concepts and~facts}\label{es03}

We~use the~following notations throughout the~paper:
\begin{list}{}{\topsep=0pt\itemsep=0pt\labelsep=0ex\labelwidth=11ex\leftmargin=\parindent\addtolength{\leftmargin}{\parindent}\addtolength{\leftmargin}{\labelwidth}}
\item[{\makebox[11ex][l]{$\langle x \rangle$}}]the~cyclic group generated by~an~element~$x$;

\item[{\makebox[11ex][l]{$\widehat{x}$}}]the~inner automorphism produced by~an~element~$x$;

\item[{\makebox[11ex][l]{$[x,y]$}}]the~commutator of~elements~$x$ and~$y$, which is~equal to~$x^{-1}y^{-1}xy$;

\item[{\makebox[11ex][l]{$[X:Y]$}}]the~index of~a~subgroup~$Y$ in~a~group~$X$;

\item[{\makebox[11ex][l]{$\ker\sigma$}}]the~kernel of~a~homomorphism~$\sigma$;

\item[{\makebox[11ex][l]{$\operatorname{Im}\sigma$}}]the~image of~a~homomorphism~$\sigma$.
\end{list}

Let $\mathcal{C}$ be~a~class of~groups, and~let $X$ be~a~group. Following~\cite{Malcev1958PIvPI}, we~say that a~subgroup~$Y$ of~a~group~$X$ is~\emph{$\mathcal{C}$\nobreakdash-sep\-a\-ra\-ble} in~this group if, for~each element $x \in X \setminus Y$, there exists a~homomorphism~$\sigma$ of~$X$ onto~a~group from~$\mathcal{C}$ such that $x\sigma \notin Y\sigma$.

\begin{eproposition}\label{ep301}
\textup{\cite[Proposition~3]{SokolovTumanova2020IVM}}
Suppose that $\mathcal{C}$ is~a~class of~groups closed under taking quotient groups\textup{,} $X$~is~a~group\textup{,} and~$Y$ is~a~normal subgroup of~$X$. Then $Y$ is~$\mathcal{C}$\nobreakdash-sep\-a\-ra\-ble in~$X$ if and~only if $X/Y$ is~residually a~$\mathcal{C}$\nobreakdash-group.
\end{eproposition}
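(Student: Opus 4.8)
The plan is to prove both implications directly from the definitions. The only real subtlety is that a homomorphism of $X$ witnessing $\mathcal{C}$\nobreakdash-separability of $Y$ need not annihilate $Y$, so to obtain a homomorphism of $X/Y$ one must pass to a further quotient, and it is precisely here that the closure of $\mathcal{C}$ under quotient groups enters.

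First I would dispose of the easy direction: assume $X/Y$ is residually a $\mathcal{C}$\nobreakdash-group and deduce that $Y$ is $\mathcal{C}$\nobreakdash-separable in $X$. Given $x \in X \setminus Y$, its image $xY$ is a non-trivial element of $X/Y$, so there is a homomorphism $\tau$ of $X/Y$ onto a group from $\mathcal{C}$ with $(xY)\tau \neq 1$. Composing $\tau$ with the canonical projection $X \to X/Y$ yields a homomorphism $\sigma$ of $X$ onto a $\mathcal{C}$\nobreakdash-group for which $Y\sigma = 1$ and $x\sigma \neq 1$; hence $x\sigma \notin Y\sigma$, as required.

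For the converse, assume $Y$ is $\mathcal{C}$\nobreakdash-separable and take a non-trivial element $xY$ of $X/Y$, i.e.\ $x \in X \setminus Y$. Choose a homomorphism $\sigma$ of $X$ onto a $\mathcal{C}$\nobreakdash-group $C$ with $x\sigma \notin Y\sigma$. The key observation is that, since $Y$ is normal in $X$ and $\sigma$ is surjective, the subgroup $Y\sigma$ is normal in $C$; because $\mathcal{C}$ is closed under taking quotient groups, the group $C/Y\sigma$ again lies in $\mathcal{C}$. Composing $\sigma$ with the projection $C \to C/Y\sigma$ gives a surjection $X \to C/Y\sigma$ whose kernel contains $Y$, so it factors through a surjection $\rho$ of $X/Y$ onto $C/Y\sigma \in \mathcal{C}$. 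Since $x\sigma \notin Y\sigma$, the element $(xY)\rho = (x\sigma)(Y\sigma)$ is non-trivial, so $xY$ survives in a $\mathcal{C}$\nobreakdash-quotient of $X/Y$; as $xY$ was arbitrary, $X/Y$ is residually a $\mathcal{C}$\nobreakdash-group.

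The whole argument is essentially routine; the only step requiring attention — the ``main obstacle'', such as it is — is recognizing that $\mathcal{C}$\nobreakdash-separability provides a homomorphism of $X$ rather than of $X/Y$, and correcting this by factoring out the normal subgroup $Y\sigma$, which is exactly where the hypothesis that $\mathcal{C}$ be closed under quotient groups is used.
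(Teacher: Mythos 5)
Your proof is correct: both directions are handled properly, and you correctly identify that the only nontrivial point is passing from a homomorphism of $X$ separating $x$ from $Y$ to a homomorphism of $X/Y$ by further factoring out the normal subgroup $Y\sigma$, which is exactly where closure under quotients is used. The paper itself gives no proof, citing the result from elsewhere, and your argument is the standard one that the cited source employs.
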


\begin{eproposition}\label{ep302}
\textup{\cite[Proposition~4]{SokolovTumanova2020IVM}}
Suppose that $\mathcal{C}$ is~a~class of~groups closed under taking subgroups\textup{,} $X$~is~a~group\textup{,} $Y$~is~a~subgroup of~$X$\textup{,} and~$Z \in \mathcal{C}^{*}(X)$. Then $Y \cap Z \in \mathcal{C}^{*}(Y)$ and\textup{,} if~$X$ is~residually a~$\mathcal{C}$\nobreakdash-group\textup{,} then $Y$ is~also residually a~$\mathcal{C}$\nobreakdash-group.
\end{eproposition}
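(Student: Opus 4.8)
The plan is to reduce both assertions to the second isomorphism theorem together with the stated closure properties of~$\mathcal{C}$; no deep idea is involved.

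First I would verify that $Y \cap Z \in \mathcal{C}^{*}(Y)$. Since $Z$ is normal in~$X$, the intersection $Y \cap Z$ is normal in~$Y$, so it only remains to check that $Y/(Y \cap Z) \in \mathcal{C}$. The second isomorphism theorem provides a natural isomorphism $Y/(Y \cap Z) \cong YZ/Z$, and $YZ/Z$ is a subgroup of~$X/Z$. By hypothesis $X/Z \in \mathcal{C}$, and $\mathcal{C}$ is closed under taking subgroups, so $YZ/Z \in \mathcal{C}$ and hence $Y/(Y \cap Z) \in \mathcal{C}$, i.e.,~$Y \cap Z \in \mathcal{C}^{*}(Y)$.

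For the second claim, I would assume that $X$ is residually a~$\mathcal{C}$\nobreakdash-group and take an arbitrary non-trivial element~$y$ of~$Y$. Viewing $y$ as a non-trivial element of~$X$, choose a homomorphism of~$X$ onto a~$\mathcal{C}$\nobreakdash-group that sends~$y$ to a non-trivial element; its kernel is a subgroup $N \in \mathcal{C}^{*}(X)$ with $y \notin N$. Applying the first part with~$N$ in place of~$Z$ gives $Y \cap N \in \mathcal{C}^{*}(Y)$, and clearly $y \notin Y \cap N$ because $y \notin N$. Therefore the quotient map $Y \to Y/(Y \cap N)$ carries~$y$ to a non-trivial element of a~$\mathcal{C}$\nobreakdash-group, so $Y$ is residually a~$\mathcal{C}$\nobreakdash-group.

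There is essentially no obstacle in this argument; the only points that need a little care are the normality of~$Y \cap Z$ in~$Y$ (which uses that $Z$ is normal in~$X$, not merely that $Z \in \mathcal{C}^{*}(X)$) and the correct invocation of closure of~$\mathcal{C}$ under subgroups when passing from~$X/Z$ to its subgroup~$YZ/Z$. Note that closure of~$\mathcal{C}$ under quotient groups is not required here, and the whole reasoning is uniform in~$\mathcal{C}$.
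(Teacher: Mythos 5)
Your proof is correct: the paper itself gives no argument for this proposition (it is quoted from~\cite{SokolovTumanova2020IVM}), and your reasoning via the second isomorphism theorem and closure under subgroups, followed by intersecting a separating kernel with~$Y$, is exactly the standard argument behind that cited result. Nothing is missing.
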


\begin{eproposition}\label{ep303}
\textup{\cite[Proposition~2]{SokolovTumanova2020IVM}}
Suppose that $\mathcal{C}$ is~a~class of~groups closed under\linebreak taking subgroups and~direct products of~a~finite number of~factors. Then\textup{,} for~every~group~$X$\textup{,} the~following statements hold.

\textup{1.\hspace{1ex}}The~intersection of~finitely many subgroups of~the~family~$\mathcal{C}^{*}(X)$ is~again a~subgroup of~this family.

\textup{2.\hspace{1ex}}If $X$ is~residually a~$\mathcal{C}$\nobreakdash-group and~$Y$ is~a~finite subgroup of~$X$\textup{,} then there exists a~subgroup $N \in \mathcal{C}^{*}(X)$ that meets $Y$ trivially\textup{,} whence $Y \in \mathcal{C}$.
\end{eproposition}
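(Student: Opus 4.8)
The plan is to derive both statements from the two closure hypotheses on~$\mathcal{C}$ via a~single device: the~canonical (``diagonal'') homomorphism of~$X$ into a~direct product of finitely many of its quotients. No use of closure under quotient groups or of any root-class axiom is needed.

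For Statement~1, I~would take $N_{1},\dots,N_{k}\in\mathcal{C}^{*}(X)$ (so that each $X/N_{i}\in\mathcal{C}$) and consider the~homomorphism $\theta\colon X\to\prod_{i=1}^{k}X/N_{i}$ given by $x\mapsto(xN_{1},\dots,xN_{k})$. Its~kernel is exactly $\bigcap_{i=1}^{k}N_{i}$, so by the~first isomorphism theorem $X/\bigcap_{i=1}^{k}N_{i}\cong\operatorname{Im}\theta$, a~subgroup of the~finite direct product $\prod_{i=1}^{k}X/N_{i}$. That product is a~$\mathcal{C}$\nobreakdash-group because $\mathcal{C}$ is~closed under direct products of a~finite number of factors, and then $\operatorname{Im}\theta\in\mathcal{C}$ because $\mathcal{C}$ is~closed under taking subgroups. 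Hence $\bigcap_{i=1}^{k}N_{i}\in\mathcal{C}^{*}(X)$.

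For Statement~2, list the~nontrivial elements of the~finite subgroup~$Y$ as $y_{1},\dots,y_{n}$. Since $X$ is~residually a~$\mathcal{C}$\nobreakdash-group, for each~$i$ there is a~homomorphism of~$X$ onto a~$\mathcal{C}$\nobreakdash-group sending~$y_{i}$ to a~nontrivial element; its~kernel~$N_{i}$ then lies in~$\mathcal{C}^{*}(X)$ and omits~$y_{i}$. Put $N=\bigcap_{i=1}^{n}N_{i}$; by Statement~1, $N\in\mathcal{C}^{*}(X)$. Every nontrivial element of~$Y$ equals some~$y_{i}\notin N_{i}\supseteq N$, so $N\cap Y=\{1\}$, and therefore $Y\cong Y/(Y\cap N)\cong YN/N\leqslant X/N\in\mathcal{C}$. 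Closure of~$\mathcal{C}$ under subgroups then yields $Y\in\mathcal{C}$.

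I~do not anticipate a~genuine obstacle: the~whole argument is elementary. The~only point requiring a~moment's attention is that residual $\mathcal{C}$\nobreakdash-ness delivers, for each nontrivial element, a~\emph{separate} surjection onto a~$\mathcal{C}$\nobreakdash-group, so finitely many of these must be combined — and Statement~1 is exactly the~tool that legitimizes this combination, which is why it is established first. Finiteness of~$Y$ enters only through keeping the~number of factors finite.
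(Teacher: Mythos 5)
Your proof is correct and is the standard argument for this cited result (the paper itself only quotes it from \cite{SokolovTumanova2020IVM} without reproducing a proof): Statement~1 via the diagonal embedding of $X/\bigcap_{i}N_{i}$ into the finite direct product of the quotients $X/N_{i}$, and Statement~2 by intersecting finitely many kernels separating the nontrivial elements of $Y$ and then embedding $Y\cong YN/N\leqslant X/N$. Nothing further is needed.
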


\begin{eproposition}\label{ep304}
\textup{\cite[Proposition~4]{Tumanova2015IVM}}
Suppose that $\mathcal{C}$ is~a~class of~groups closed under taking quotient groups\textup{,} $X$~is~a~group\textup{,} and~$Y$ is~a~normal subgroup of~$X$. If~there exists a~homomorphism of~$X$ onto~a~group from~$\mathcal{C}$ acting injectively on~$Y$\textup{,} then $\operatorname{Aut}_{X}(Y) \in \mathcal{C}$.{\parfillskip=0pt{}\par}
\end{eproposition}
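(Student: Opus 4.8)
The plan is to realize $\operatorname{Aut}_{X}(Y)$ as a quotient of $X$ and then show that this quotient factors through the given $\mathcal{C}$\nobreakdash-image of~$X$. First I would introduce the conjugation homomorphism $\tau\colon X \to \operatorname{Aut}Y$ defined by $x\tau = \widehat{x}|_{Y}$; it is well defined precisely because $Y$ is normal in~$X$, its image is by definition $\operatorname{Aut}_{X}(Y)$, and its kernel is the centralizer $C = C_{X}(Y) = \{x \in X : xy = yx \text{ for all } y \in Y\}$. Hence $\operatorname{Aut}_{X}(Y) \cong X/C$, and it suffices to prove that $X/C \in \mathcal{C}$.

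Let $\sigma$ be the homomorphism of~$X$ onto a group $G \in \mathcal{C}$ that acts injectively on~$Y$. The key step is to check the inclusion $\ker\sigma \leqslant C$. Take arbitrary elements $k \in \ker\sigma$ and $y \in Y$. Since $Y$ is normal in~$X$, the element $k^{-1}y^{-1}k$ lies in~$Y$, so the commutator $[k,y] = k^{-1}y^{-1}ky$ belongs to~$Y$ as well. Applying~$\sigma$ and using $k\sigma = 1$, we get $[k,y]\sigma = [k\sigma,y\sigma] = 1$, so $[k,y] \in Y \cap \ker\sigma$. As $\sigma$ is injective on~$Y$, this intersection is trivial, whence $[k,y] = 1$; thus $k$ centralizes~$Y$ and $k \in C$.

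It remains to assemble the pieces: since $\ker\sigma \leqslant C$, the group $X/C$ is a quotient of $X/\ker\sigma$, and $X/\ker\sigma \cong \operatorname{Im}\sigma = G \in \mathcal{C}$; because $\mathcal{C}$ is closed under taking quotient groups, we conclude $\operatorname{Aut}_{X}(Y) \cong X/C \in \mathcal{C}$. The argument is short and essentially computational; the only point that requires a moment's care—hardly an obstacle—is the observation that normality of~$Y$ places the commutator $[k,y]$ inside~$Y$, which is exactly what allows the injectivity of $\sigma|_{Y}$ to be brought to bear.
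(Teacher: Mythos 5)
Your proof is correct. The paper itself gives no argument for this proposition (it is quoted from \cite[Proposition~4]{Tumanova2015IVM}), but your reasoning --- identifying $\operatorname{Aut}_{X}(Y)$ with $X/\mathcal{Z}_{X}(Y)$ and using the commutator $[k,y]\in Y\cap\ker\sigma=1$ to show $\ker\sigma\leqslant\mathcal{Z}_{X}(Y)$ --- is exactly the standard argument and matches the technique the authors use for the neighbouring Proposition~\ref{ep305}.
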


\begin{eproposition}\label{ep305}
If $\mathcal{C}$ is~a~class of~groups closed under taking quotient groups\textup{,} $X$~is~residually a~$\mathcal{C}$\nobreakdash-group\textup{,} and~$Y$ is~a~normal subgroup of~$X$\textup{,} then $\operatorname{Aut}_{X}(Y)$ is~also residually a~$\mathcal{C}$\nobreakdash-group.
\end{eproposition}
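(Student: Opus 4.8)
The plan is to realize $\operatorname{Aut}_{X}(Y)$ as a quotient group of $X$ and then deduce its residual $\mathcal{C}$\nobreakdash-ness from that of $X$ by a separability argument based on Proposition~\ref{ep301}.

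First I would introduce $C_{X}(Y)$, the centralizer of $Y$ in $X$, together with the map $X \to \operatorname{Aut}Y$ sending each $x \in X$ to the restriction $\widehat{x}|_{Y}$ of the inner automorphism $\widehat{x}$ to $Y$. Since $Y$ is normal in $X$, this restriction is an automorphism of $Y$ for every $x$ (as already noted in the text preceding the definition of $\operatorname{Aut}_{X}(Y)$), and $x \mapsto \widehat{x}|_{Y}$ is a homomorphism whose image is $\operatorname{Aut}_{X}(Y)$ by definition and whose kernel is precisely $C_{X}(Y)$. Hence $\operatorname{Aut}_{X}(Y) \cong X/C_{X}(Y)$, and it suffices to prove that $X/C_{X}(Y)$ is residually a $\mathcal{C}$\nobreakdash-group. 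As $C_{X}(Y)$ is normal in $X$ and $\mathcal{C}$ is closed under taking quotient groups, Proposition~\ref{ep301} reduces this to the $\mathcal{C}$\nobreakdash-separability of $C_{X}(Y)$ in $X$, which is what I would establish.

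So, given $x \in X \setminus C_{X}(Y)$, I would choose $y \in Y$ with $[x,y] \ne 1$. Since $X$ is residually a $\mathcal{C}$\nobreakdash-group, there is a homomorphism $\sigma$ of $X$ onto a group $\overline{X} \in \mathcal{C}$ with $[x,y]\sigma \ne 1$, i.e.\ $x\sigma$ and $y\sigma$ do not commute in $\overline{X}$. The one point requiring verification is that $C_{X}(Y)\sigma$ lies in the centralizer of $Y\sigma$ in $\overline{X}$: this is immediate, for any $c \in C_{X}(Y)$ commutes with every element of $Y$, hence $c\sigma$ commutes with every element of $Y\sigma$. Since $x\sigma$ fails to commute with $y\sigma \in Y\sigma$, we obtain $x\sigma \notin C_{X}(Y)\sigma$, so $\sigma$ separates $x$ from $C_{X}(Y)$. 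As $x$ was arbitrary, $C_{X}(Y)$ is $\mathcal{C}$\nobreakdash-separable in $X$, and the statement follows by the reduction above.

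I do not anticipate a genuine obstacle here: the argument merely combines the standard identification $\operatorname{Aut}_{X}(Y) \cong X/C_{X}(Y)$ with a direct use of the residual $\mathcal{C}$\nobreakdash-ness of $X$ through the commutator $[x,y]$, plus an appeal to Proposition~\ref{ep301}. The only step that needs a moment's care is confirming that the chosen surjection $\sigma$ carries $C_{X}(Y)$ into the centralizer of $Y\sigma$, which is exactly what makes $[x,y] \ne 1$ a usable witness for $x \notin C_{X}(Y)$ after applying $\sigma$.
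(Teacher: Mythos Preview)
Your proof is correct and follows essentially the same route as the paper's: both identify $\operatorname{Aut}_{X}(Y)$ with $X/\mathcal{Z}_{X}(Y)$, reduce via Proposition~\ref{ep301} to the $\mathcal{C}$\nobreakdash-separability of the centralizer, and separate a given $x \notin \mathcal{Z}_{X}(Y)$ by choosing $y \in Y$ with $[x,y] \ne 1$ and a homomorphism onto a $\mathcal{C}$\nobreakdash-group that does not kill this commutator. The only cosmetic difference is that the paper phrases the last step as $x \notin \mathcal{Z}_{X}(Y)N$ for a suitable $N \in \mathcal{C}^{*}(X)$, whereas you phrase it as $x\sigma \notin C_{X}(Y)\sigma$.
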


\begin{proof}
It~is~easy to~see that $\operatorname{Aut}_{X}(Y) \cong X/\mathcal{Z}_{X}(Y)$, where $\mathcal{Z}_{X}(Y)$ is~the~centralizer of~$Y$ in~$X$. Due~to~Proposition~\ref{ep301}, it~suffices to~show that, if~$\mathcal{Z}_{X}(Y) \ne X$, then the~subgroup~$\mathcal{Z}_{X}(Y)$ is~$\mathcal{C}$\nobreakdash-sep\-a\-ra\-ble in~$X$.

Let $x \in X \setminus \mathcal{Z}_{X}(Y)$. Then $[x,y] \ne 1$ for~some $y \in Y$. Since $X$ is~residually a~$\mathcal{C}$\nobreakdash-group, there exists a~subgroup $N \in \mathcal{C}^{*}(X)$ which does~not contain the~commutator~$[x,y]$. It~follows that $x \notin \mathcal{Z}_{X}(Y)N$ and,~hence, the~subgroup~$\mathcal{Z}_{X}(Y)$ is~$\mathcal{C}$\nobreakdash-sep\-a\-ra\-ble.
\end{proof}

The~proof of~the~following proposition is~quite simple and~is~therefore omitted.

\begin{eproposition}\label{ep306}
Suppose that $X$ is~a~group\textup{,} $Y$~is~a~normal subgroup of~$X$\textup{,} and~$\sigma$ is a~homomorphism of~$X$. Suppose also that $\bar\sigma\colon\!\operatorname{Aut}_{X}(Y) \to \operatorname{Aut}_{X\sigma}(Y\sigma)$ is~the~map taking~$\widehat{x}|_{Y}$ to~$\widehat{x\sigma}|_{Y\sigma}$ for~each $x \in X$. Then $\bar\sigma$ is~a~correctly defined surjective homomorphism.
\end{eproposition}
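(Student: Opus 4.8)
The plan is to exhibit $\bar\sigma$ as the map induced on quotient groups by $\sigma$ itself. For a group $X$ with a normal subgroup $Y$, the assignment $x \mapsto \widehat{x}|_{Y}$ is a homomorphism $\pi_{X}\colon X \to \operatorname{Aut}Y$ whose image is, by definition, $\operatorname{Aut}_{X}(Y)$ and whose kernel is the centralizer $\mathcal{Z}_{X}(Y)$; hence $\operatorname{Aut}_{X}(Y) \cong X/\mathcal{Z}_{X}(Y)$, and the preimage under $\pi_{X}$ of a given $\widehat{x}|_{Y}$ is exactly the coset $x\mathcal{Z}_{X}(Y)$. Since $Y$ is normal in $X$, its image $Y\sigma$ is normal in $X\sigma$ — conjugation by $x\sigma$ carries $Y\sigma$ to $(x^{-1}Yx)\sigma = Y\sigma$, and the elements $x\sigma$ with $x \in X$ generate $X\sigma$ — so the analogous homomorphism $\pi_{X\sigma}\colon X\sigma \to \operatorname{Aut}_{X\sigma}(Y\sigma)$ is likewise defined.

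First I would form the composite $\rho = \sigma\pi_{X\sigma}\colon X \to \operatorname{Aut}_{X\sigma}(Y\sigma)$, which sends $x$ to $\widehat{x\sigma}|_{Y\sigma}$. It is a surjective homomorphism: every element of $\operatorname{Aut}_{X\sigma}(Y\sigma)$ has the form $\widehat{z}|_{Y\sigma}$ with $z \in X\sigma$, and $z = x\sigma$ for some $x \in X$ because $\sigma$ maps $X$ onto $X\sigma$. Next I would check that $\mathcal{Z}_{X}(Y) \leqslant \ker\rho$: if $x$ centralizes $Y$, then $[x,y] = 1$ for all $y \in Y$, so $[x\sigma, y\sigma] = [x,y]\sigma = 1$ for all $y \in Y$; since the elements $y\sigma$ exhaust $Y\sigma$, the element $x\sigma$ centralizes $Y\sigma$, i.e.~$x\rho = \widehat{x\sigma}|_{Y\sigma}$ is trivial. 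Consequently $\rho$ factors through $X/\mathcal{Z}_{X}(Y) \cong \operatorname{Aut}_{X}(Y)$, and the induced map is precisely the assignment $\widehat{x}|_{Y} \mapsto \widehat{x\sigma}|_{Y\sigma}$, i.e.~$\bar\sigma$. Being induced from a homomorphism via the universal property of the quotient, $\bar\sigma$ is automatically correctly defined and a homomorphism, and it is surjective because $\rho$ is.

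If one prefers to bypass quotients, the three required properties can be verified directly with the same ingredients: well-definedness is the implication ``$\widehat{x_{1}}|_{Y} = \widehat{x_{2}}|_{Y} \Rightarrow \widehat{x_{1}\sigma}|_{Y\sigma} = \widehat{x_{2}\sigma}|_{Y\sigma}$'', which follows by applying the centralizer computation above to $x_{1}x_{2}^{-1}$; the homomorphism property follows from $\widehat{x_{1}}|_{Y}\,\widehat{x_{2}}|_{Y} = \widehat{x_{1}x_{2}}|_{Y}$ together with $(x_{1}x_{2})\sigma = (x_{1}\sigma)(x_{2}\sigma)$; and surjectivity is as above. I do not expect a genuine obstacle here — the proposition is essentially bookkeeping, in accordance with the author's remark. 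The only point needing a moment's care is the well-definedness step, and specifically the observation that $\sigma$ carries $\mathcal{Z}_{X}(Y)$ into $\mathcal{Z}_{X\sigma}(Y\sigma)$; this rests on the elementary identity $[x,y]\sigma = [x\sigma, y\sigma]$ and on $Y\sigma$ being exactly the set of images $y\sigma$ with $y \in Y$. Everything else is immediate from $\sigma$ mapping onto $X\sigma$.
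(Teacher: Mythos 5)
Your proof is correct. The paper omits the proof of this proposition as ``quite simple,'' and your argument --- factoring the composite $x \mapsto \widehat{x\sigma}|_{Y\sigma}$ through $X/\mathcal{Z}_{X}(Y) \cong \operatorname{Aut}_{X}(Y)$ after checking that $\sigma$ carries $\mathcal{Z}_{X}(Y)$ into $\mathcal{Z}_{X\sigma}(Y\sigma)$ --- is exactly the intended routine verification, with all three required properties (well-definedness, multiplicativity, surjectivity) accounted for.
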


\begin{eproposition}\label{ep307}
Suppose that $\mathcal{C}$ is~a~class of~groups closed under taking quotient groups\textup{,} $X$~is~a~group\textup{,} and~$Y$ is~a~subgroup of~$X$. If~$X$ is~$\mathcal{C}$\nobreakdash-qua\-si-reg\-u\-lar with~respect to~$Y$ and~a~subgroup $M \in \mathcal{C}^{*}(Y)$ is~normal in~$X$\textup{,} then there exists a~subgroup $N \in \mathcal{C}^{*}(X)$ such that $N \cap Y = M$.
\end{eproposition}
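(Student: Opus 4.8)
The plan is to reduce the claim to the definition of $\mathcal{C}$-quasi-regularity together with a standard fact about intersections of normal subgroups. Since $X$ is $\mathcal{C}$-quasi-regular with respect to $Y$ and $M \in \mathcal{C}^{*}(Y)$, there exists a subgroup $N_{0} \in \mathcal{C}^{*}(X)$ with $N_{0} \cap Y \leqslant M$. The only defect of $N_{0}$ is that the inclusion $N_{0} \cap Y \leqslant M$ need not be an equality, so the plan is to enlarge $N_{0}$ to a subgroup $N \in \mathcal{C}^{*}(X)$ for which $N \cap Y = M$ exactly.

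First I would set $N = N_{0}M$. Since $M$ is normal in $X$ by hypothesis and $N_{0}$ is normal in $X$, the product $N = N_{0}M$ is a normal subgroup of $X$. Next I would check that $N \in \mathcal{C}^{*}(X)$, i.e.\ that $X/N \in \mathcal{C}$: indeed $X/N = X/(N_{0}M)$ is a quotient of $X/N_{0} \in \mathcal{C}$, and $\mathcal{C}$ is closed under taking quotient groups, so $X/N \in \mathcal{C}$. Then I would compute $N \cap Y$. Clearly $M \leqslant N$ and $M \leqslant Y$, so $M \leqslant N \cap Y$. For the reverse inclusion, take $x \in N \cap Y$ and write $x = n m$ with $n \in N_{0}$, $m \in M$; then $n = x m^{-1} \in Y$ because $x \in Y$ and $m \in M \leqslant Y$, so $n \in N_{0} \cap Y \leqslant M$, whence $x = nm \in M$. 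Thus $N \cap Y = M$, as required.

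The routine verifications above are the whole argument; there is no serious obstacle here. The only point deserving a moment's care is the use of the normality of $M$ in $X$ (not merely in $Y$): this is exactly what guarantees that the product $N_{0}M$ is a subgroup — in fact a normal subgroup — of $X$, which in turn is what makes the quotient $X/(N_{0}M)$ well defined and a quotient of $X/N_{0}$. Without that hypothesis one could not form $N$ in this way, and indeed the conclusion $N \cap Y = M$ with $N$ normal in $X$ would fail in general. So I would make sure to flag explicitly where the normality of $M$ in $X$ enters.
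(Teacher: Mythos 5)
Your proof is correct and is essentially identical to the paper's: both take the subgroup $T=N_{0}\in\mathcal{C}^{*}(X)$ supplied by $\mathcal{C}$\nobreakdash-quasi-regularity, enlarge it to $N=MT$, and verify normality, $X/N\in\mathcal{C}$ via the quotient of $X/T$, and $N\cap Y=M$. Your explicit check of the equality $N\cap Y=M$ just spells out what the paper dismisses as ``easy to see.''
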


\begin{proof}
Suppose that a~subgroup $M \in \mathcal{C}^{*}(Y)$ is~normal in~$X$. Since the~latter is~$\mathcal{C}$\nobreakdash-qua\-si-reg\-u\-lar with~respect to~$Y$, there exists a~subgroup $T \in \mathcal{C}^{*}(X)$ such that $T \cap Y \leqslant M$. Let $N = MT$. Then $N$ is~normal in~$X$ and~$N \cap Y = M$, as~is easy to~see. Since $\mathcal{C}$ is closed under taking quotient groups, it~follows from~the~relations $X/N \cong (X/T)/(N/T)$ and~$X/T \in \mathcal{C}$ that $X/N \in \mathcal{C}$. Thus, $N$ is~the~desired subgroup.
\end{proof}

\begin{eproposition}\label{ep308}
If $\mathcal{C}$ is~a~root class of~groups consisting only of~periodic groups\textup{,} then the~following statements hold.

\textup{1.\hspace{1ex}}Every $\mathcal{C}$-group is~of~finite exponent~\textup{\cite[Proposition~17]{SokolovTumanova2020IVM}}.

\textup{2.\hspace{1ex}}A~finite solvable group belongs to~$\mathcal{C}$ if and~only if its order is~a~$\mathfrak{P}(\mathcal{C})$\nobreakdash-num\-ber \textup{(}i.e.\textup{,}~each prime divisor of~this order lies in~$\mathfrak{P}(\mathcal{C})$\textup{)}~\textup{\cite[Proposition~8]{Tumanova2019SMJ}}.
\end{eproposition}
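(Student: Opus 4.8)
The plan is to treat the two statements separately, since each relies on a different one of the equivalent defining properties of a root class: Statement~1 on closure under unrestricted wreath products (property~2) of the definition), and Statement~2 on closure under extensions (property~3), together with Cauchy's theorem.

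For Statement~1, I would argue by contradiction. Assume $G \in \mathcal{C}$ has infinite exponent; in particular $G$ is non-trivial, since the trivial group has exponent~$1$. Because $\mathcal{C}$ is a root class, property~2) gives $W = G \wr G \in \mathcal{C}$, and by hypothesis every $\mathcal{C}$-group is periodic, so $W$ is periodic. Consider the base group $\prod_{g \in G} G$ of $W$: it is a subgroup of $W$, hence periodic, and it consists of all functions $G \to G$ under pointwise multiplication. The ``diagonal'' element $f$ defined by $f(g) = g$ for every $g \in G$ satisfies $f^{m}(g) = g^{m}$, so $f$ has finite order $m$ if and only if $g^{m} = 1$ for all $g \in G$. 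Since $f$ is periodic, such an $m$ exists, and then $G$ has exponent dividing $m$, contradicting the choice of $G$. Hence every $\mathcal{C}$-group has finite exponent.

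For Statement~2, one direction is immediate and uses neither solvability nor periodicity: if a finite group $S$ lies in $\mathcal{C}$ and a prime $p$ divides $|S|$, Cauchy's theorem produces an element of order $p$ in $S$, so $p \in \mathfrak{P}(\mathcal{C})$, and thus $|S|$ is a $\mathfrak{P}(\mathcal{C})$-number. For the converse, I would first record that $p \in \mathfrak{P}(\mathcal{C})$ implies that a cyclic group of order $p$ lies in $\mathcal{C}$: choose $Z \in \mathcal{C}$ containing an element $z$ whose order is divisible by $p$; since $\mathcal{C}$ consists of periodic groups, $z$ has finite order $n$, and then $\langle z^{n/p}\rangle$ is a subgroup of $Z$ of order $p$, so it belongs to $\mathcal{C}$ by closure under subgroups. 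Now let $S$ be finite solvable with $|S|$ a $\mathfrak{P}(\mathcal{C})$-number and take a composition series $1 = S_{0} \triangleleft S_{1} \triangleleft \dots \triangleleft S_{k} = S$; solvability forces each factor $S_{i}/S_{i-1}$ to be cyclic of prime order $p_{i}$, and $p_{i} \mid |S|$ gives $p_{i} \in \mathfrak{P}(\mathcal{C})$, hence $S_{i}/S_{i-1} \in \mathcal{C}$. Since a root class is closed under extensions (property~3), equivalent to~2) for root classes), an induction on $i$ along this series gives $S_{i} \in \mathcal{C}$, and in particular $S = S_{k} \in \mathcal{C}$.

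I do not expect a serious obstacle: both arguments are short once the appropriate defining property of a root class is invoked. The two points that need care are, in Statement~1, using the \emph{unrestricted} wreath product so that the diagonal function $f$ genuinely belongs to the $\mathcal{C}$-group $W$ (the restricted wreath product would not contain it when $G$ is infinite), and, in Statement~2, remembering that $\mathfrak{P}(\mathcal{C})$ is defined through element orders in $\mathcal{C}$-groups, so the step from ``$p$ divides the order of some element of some $Z \in \mathcal{C}$'' to ``a cyclic group of order $p$ lies in $\mathcal{C}$'' must pass through a suitable power of that element and uses periodicity.
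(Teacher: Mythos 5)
Your proof is correct. There is nothing in the paper to compare it against: Proposition~3.8 is not proved in the text but simply quoted from earlier work (Statement~1 from~[Proposition~17]\cite{SokolovTumanova2020IVM}, Statement~2 from~[Proposition~8]\cite{Tumanova2019SMJ}). Your two arguments are exactly the standard ones. For Statement~1, the diagonal element of the base group $\prod_{g \in G} G$ of the unrestricted wreath product $G \wr G$ (equivalently, of the unrestricted direct power supplied directly by property~3) of the root-class definition, which avoids mentioning wreath products at all) has order equal to the exponent of~$G$, and periodicity of that $\mathcal{C}$-group forces this order to be finite. For Statement~2, Cauchy's theorem gives the forward direction with no use of solvability, and for the converse you correctly pass from $p \in \mathfrak{P}(\mathcal{C})$ to ``the cyclic group of order~$p$ lies in~$\mathcal{C}$'' via a suitable power of a periodic element together with closure under subgroups, and then climb a composition series (whose factors are cyclic of prime order by solvability) using closure under extensions. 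The two points you flag as delicate --- that the \emph{unrestricted} product is needed so that the diagonal function actually lies in the group, and that extracting a subgroup of order~$p$ requires the element order to be finite --- are both handled properly.
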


In~what follows, the~expression
\[
\mathbb{P} = \langle A * B;\,H = K,\,\varphi \rangle
\]
means that $\mathbb{P}$ is~the~generalized free product of~groups $A$ and~$B$ with~subgroups $H \leqslant A$ and~$K \leqslant B$ amalgamated by~an~isomorphism $\varphi\colon H \to K$. According to~\cite{BaumslagG1963TAMS1}, subgroups $R \leqslant A$ and~$S \leqslant B$ are~said to~be~\emph{$(H,K,\varphi)$-com\-pat\-i\-ble} if $(R \cap H)\varphi = S \cap K$. Suppose that $R$ is~normal in~$A$, $S$~is~normal in~$B$, and~$\varphi_{R,S}\colon HR/R \to KS/S$ is~the~map taking an~element~$hR$, $h \in H$, to~the~element~$(h\varphi)S$. It~follows from~the~equality $(R \cap H)\varphi = S \cap K$ that $\varphi_{R,S}$ is~a~correctly defined isomorphism and,~therefore, we~can consider the~generalized free product
\[
\mathbb{P}_{R,S} = \langle A/R * B/S;\,HR/R = KS/S,\,\varphi_{R,S} \rangle.
\]
As~is~easy to~see, the~identity mapping of~the~generators of~$\mathbb{P}$ into~$\mathbb{P}_{R,S}$ defines a~surjective homomorphism $\rho_{R,S}\colon \mathbb{P} \to \mathbb{P}_{R,S}$, whose kernel coincides with~the~normal closure of~the~set $R \cup S$ in~$\mathbb{P}$. We~note also that, if~$H$ and~$K$ are~normal in~$A$ and~$B$, respectively, then $H$ is~normal in~$\mathbb{P}$ and,~therefore, the~group~$\operatorname{Aut}_{\mathbb{P}}(H)$ is~defined. Clearly, this group is~generated by~its subgroups~$\operatorname{Aut}_{A}(H)$ and~$\varphi\operatorname{Aut}_{B}(K)\varphi^{-1}$.

Suppose that $x \in \mathbb{P}$ and~$x = x_{1}x_{2} \ldots x_{n}$, where $n \geqslant 1$ and~$x_{1},\,x_{2},\,\ldots,\,x_{n} \in A \cup B$. This product is~called a~\emph{reduced form} of~$x$ if no two adjacent factors~$x_{i}$ and~$x_{i+1}$ lie simultaneously in~$A$ or~$B$. The~number~$n$ is~said to~be~the~\emph{length} of~this form. It~is~known that, if~an~element $x \in \mathbb{P}$ has at~least one reduced form of~length greater than~$1$, then it is~non-trivi\-al (see, e.g.,~\cite[Chapter~IV, Theorem~2.6]{LyndonSchupp1977}).

Similar assertions hold for~the~HNN-ex\-ten\-sion $\mathbb{E} = \langle B,t;\,t^{-1}Ht = K,\,\varphi \rangle$. A~subgroup~$Q$ of~$B$ is~said to~be~\emph{$(H,K,\varphi)$-com\-pat\-i\-ble} if $(Q \cap H)\varphi = Q \cap K$. When $Q$ is~normal in~$B$, this equality ensures that the~map $\varphi_{Q}\colon HQ/Q \to KQ/Q$ given by~the~rule $hQ \mapsto (h\varphi)Q$, $h \in H$, is~a~correctly defined isomorphism. Therefore, the~HNN-ex\-ten\-sion
\[
\mathbb{E}_{Q} = \langle B/Q,t;\,t^{-1}(HQ/Q)t = KQ/Q,\,\varphi_{Q} \rangle
\]
can be~considered. As~above, the~identity mapping of~the~generators of~$\mathbb{E}$ into~$\mathbb{E}_{Q}$ defines a~surjective homomorphism $\rho_{Q}\colon \mathbb{E} \to \mathbb{E}_{Q}$, whose kernel coincides with~the~normal closure of~$Q$ in~$\mathbb{E}$.

Obviously, each element $x\kern-1pt{} \in \mathbb{E}$ can be~represented as~a~product $x\kern-.5pt{} =\kern-.5pt{} x_{0}t^{\varepsilon_{1}}x_{1} \ldots x_{n-1}t^{\varepsilon_{n}}x_{n}$, where $n \geqslant 0$, $x_{0},\,x_{1},\,\ldots,\,x_{n} \in B$, and~$\varepsilon_{1},\,\ldots,\,\varepsilon_{n} \in \{1,-1\}$. This product is~said to~be~a~\emph{reduced form} of~$x$ of~\emph{length~$n$} if, for~each $i \in \{1,\,\ldots,\,n-1\}$, the~equalities $-\varepsilon_{i} = 1 = \varepsilon_{i+1}$ imply that $x_{i} \notin H$, while the~equalities $\varepsilon_{i} = 1 = -\varepsilon_{i+1}$ guarantee that $x_{i} \notin K$. Britton's lemma~\cite{Britton1963AM} states that, if~an~element $x \in \mathbb{E}$ has a~reduced form of~non-zero length, then it is~non-trivi\-al. The~next two propositions are~special cases of~Theorem~4 from~\cite{Cohen1974JAMS} and~Theorem~1 from~\cite{Sokolov2023LJM}.

\begin{eproposition}\label{ep309}
Let $\mathbb{E} = \langle B,t;\,t^{-1}Ht = K,\,\varphi \rangle$. If~$N$ is~a~normal subgroup of~$\mathbb{E}$ and $N \cap B = 1$\textup{,} then $N$ is~free.
\end{eproposition}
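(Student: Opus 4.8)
The plan is to exhibit $N$ as a group acting freely on a tree. Recall that $\mathbb{E}$ is the fundamental group of the graph of groups with a single vertex carrying~$B$ and a single loop edge carrying~$H$ (the two edge inclusions into $B$ being the identity embedding $H\hookrightarrow B$ and the embedding $h\mapsto h\varphi$, whose image is $K$). Bass--Serre theory therefore furnishes a tree~$T$ on which $\mathbb{E}$ acts without inversions, with vertex set identified with the cosets $\mathbb{E}/B$ and (geometric) edge set with $\mathbb{E}/H$; the stabilizer of the vertex $gB$ is the conjugate $gBg^{-1}$, and the stabilizer of an edge is a conjugate of~$H$ contained in the stabilizers of both of its endpoints. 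Thus every edge stabilizer is contained in a vertex stabilizer.

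First I would show that $N$ acts freely on~$T$. Let $g\in\mathbb{E}$. Since $N$ is normal in~$\mathbb{E}$ we have $g^{-1}Ng=N$, hence
\[
N\cap gBg^{-1}=g\bigl(g^{-1}Ng\cap B\bigr)g^{-1}=g\,(N\cap B)\,g^{-1}=1 .
\]
So $N$ meets every vertex stabilizer of~$T$ trivially, and consequently (edge stabilizers lying inside vertex stabilizers) it meets every edge stabilizer trivially as well. Since the $\mathbb{E}$-action, and a fortiori the $N$-action, is without inversions, no non-trivial element of~$N$ fixes a vertex or an edge of~$T$; that is, $N$ acts freely on~$T$.

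Finally, a group acting freely and without inversions on a tree is free, so $N$ is free, as required. The only points that require care are the identification of the vertex and edge stabilizers of~$T$ with the conjugates of~$B$ and of~$H$ --- which is precisely the Bass--Serre description of the HNN-ex\-ten\-sion --- and the remark that edge stabilizers sit inside vertex stabilizers; everything else is immediate. Alternatively, as noted in the text, the statement is the special case $N\cap B=1$ (which, by normality, forces $N\cap gBg^{-1}=1$ for all $g$) of Theorem~4 in~\cite{Cohen1974JAMS}; the tree-theoretic argument above merely makes that deduction self-contained. I expect the main (purely expository) obstacle to be writing down the Bass--Serre tree and its stabilizers cleanly enough that the normality computation can be invoked; there is no serious analytic difficulty.
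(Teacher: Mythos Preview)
Your argument is correct. The paper does not supply its own proof of this proposition; it simply records it as a special case of Theorem~4 in~\cite{Cohen1974JAMS}. Your Bass--Serre tree argument is the standard self-contained proof of that deduction and is entirely in line with the paper's citation.
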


\begin{eproposition}\label{ep310}
Let $\mathcal{C}$ be~a~root class of~groups. If~$\mathbb{E} = \langle B,t;\,t^{-1}Ht = K,\,\varphi \rangle$\textup{,} $B$~is residually a~$\mathcal{C}$\nobreakdash-group\textup{,} and~there exists a~homomorphism of~$\mathbb{E}$ onto~a~group from~$\mathcal{C}$ acting injectively on~$H$ and~$K$\textup{,} then $\mathbb{E}$ is~residually a~$\mathcal{C}$\nobreakdash-group.
\end{eproposition}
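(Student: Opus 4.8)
The plan is to analyze the kernel of the given homomorphism and then to separate the elements of~$\mathbb{E}$ by means of the standard wreath-product construction. Denote by~$\psi$ the hypothesized homomorphism of~$\mathbb{E}$ onto a group $C_{0} \in \mathcal{C}$ that acts injectively on~$H$ and on~$K$, and put $N = \ker\psi$. Since $N$ is normal in~$\mathbb{E}$ and $\psi|_{H}$, $\psi|_{K}$ are injective, for every $g \in \mathbb{E}$ we have $N \cap g^{-1}Hg = g^{-1}(N \cap H)g = 1$ and, likewise, $N \cap g^{-1}Kg = 1$. Hence every edge stabilizer of the natural action of~$N$ on the Bass--Serre tree of~$\mathbb{E}$ is trivial, while each vertex stabilizer is an $\mathbb{E}$-conjugate of $Q = N \cap B$. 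By Cohen's structure theorem (\cite[Theorem~4]{Cohen1974JAMS}; its case $Q = 1$ is Proposition~\ref{ep309}) it follows that $N$ is the free product of a family of isomorphic copies of~$Q$ and a free group. Moreover, $\mathbb{E}/N = \operatorname{Im}\psi$ is a subgroup of~$C_{0}$, so $\mathbb{E}/N \in \mathcal{C}$.

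Next I would verify that $N$ is residually a~$\mathcal{C}$-group. Indeed, $Q \leqslant B$, so $Q$ is residually a~$\mathcal{C}$-group by Proposition~\ref{ep302}; a free group is residually a~$\mathcal{C}$-group for any root class~$\mathcal{C}$ (the class~$\mathcal{C}$ contains a non-trivial cyclic group, so, being closed under taking subgroups and unrestricted wreath products, it contains either all finite $p$-groups for a suitable prime~$p$ or all free solvable groups, and every free group is residually finite-$p$ as well as residually solvable); and the free product of residually $\mathcal{C}$-groups is again residually a~$\mathcal{C}$-group by a classical theorem (see, e.g.,~\cite{Gruenberg1957PLMS}). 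Therefore $N$ is residually a~$\mathcal{C}$-group.

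Now take any $1 \ne g \in \mathbb{E}$. If $g \notin N$, then $g\psi \ne 1$ and $\psi$ itself is the desired homomorphism. If $g \in N$, then, using the previous step, choose a homomorphism~$\chi$ of~$N$ onto a group $D \in \mathcal{C}$ with $g\chi \ne 1$, fix a transversal $\{u_{p} : p \in \mathbb{E}/N\}$ of~$N$ in~$\mathbb{E}$ with $u_{1} = 1$, and define a map $\Phi\colon \mathbb{E} \to D \wr (\mathbb{E}/N)$ into the unrestricted wreath product by letting~$x\Phi$ be the pair whose second coordinate is the image~$\bar x$ of~$x$ in~$\mathbb{E}/N$ and whose first coordinate is the function $p \mapsto (u_{p}\,x\,u_{p\bar x}^{-1})\chi$. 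A routine check shows that $\Phi$ is a homomorphism; since $g \in N$ and $u_{1} = 1$, the value of the first coordinate of~$g\Phi$ at $p = 1$ equals $g\chi \ne 1$, so $g\Phi \ne 1$. As $D$ and~$\mathbb{E}/N$ lie in~$\mathcal{C}$ and $\mathcal{C}$ is closed under unrestricted wreath products, $D \wr (\mathbb{E}/N) \in \mathcal{C}$; thus $\Phi$ is the required homomorphism. This covers all cases, so $\mathbb{E}$ is residually a~$\mathcal{C}$-group.

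The crux of the argument --- and the place where the hypothesis is used in full strength --- is the identification of~$N = \ker\psi$: one needs injectivity of~$\psi$ on \emph{both}~$H$ and~$K$, together with normality of~$N$ in~$\mathbb{E}$, to force \emph{all} the conjugate intersections $N \cap g^{-1}Hg$ and $N \cap g^{-1}Kg$ to vanish, and one must also keep in mind the (standard but not entirely obvious) fact that free groups are residually $\mathcal{C}$-groups for an arbitrary root class. Everything else is routine bookkeeping with root classes; in effect, the argument above is just the specialization of~\cite[Theorem~1]{Sokolov2023LJM} to the situation under consideration.
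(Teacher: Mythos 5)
Your proof is correct. Note that the paper does not prove Proposition~\ref{ep310} at all: it is stated as a special case of Theorem~1 of~\cite{Sokolov2023LJM} (with Proposition~\ref{ep309} likewise deferred to~\cite{Cohen1974JAMS}), so there is no internal argument to compare against. What you have written is essentially a correct self-contained reconstruction of the proof of that cited result in the HNN case: normality of $N=\ker\psi$ kills all conjugate edge stabilizers, Bass--Serre/Cohen gives $N\cong(*_{i}Q_{i})*F$ with $Q_{i}\cong N\cap B$, Gruenberg's theorems supply the residual $\mathcal{C}$-ness of free groups and of free products of residually $\mathcal{C}$-groups for a root class, and the Kaloujnine--Krasner wreath-product embedding handles the extension of the residually $\mathcal{C}$-group $N$ by the $\mathcal{C}$-group $\mathbb{E}/N$ --- all steps check out, including the verification that $\Phi$ is a homomorphism and that $f_{g}(1)=g\chi\ne 1$.
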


In~what follows, if~$\mathbb{E} = \langle B,t;\,t^{-1}Ht = K,\,\varphi \rangle$, then the~expression
\[
\mathbb{B} = \langle B * B;\,H = K,\,\varphi \rangle
\]
means that $\mathbb{B}$ is~the~generalized free product of~two isomorphic copies of~$B$ with~the~subgroups~$H$ and~$K$ amalgamated by~the~same isomorphism $\varphi\colon H \to K$. Let $\zeta_{gen}$ be~the~map of~the~generators of~$\mathbb{B}$ into~$\mathbb{E}$ given by~the~rule: $x \mapsto t^{-1}xt$, $y \mapsto y$, where $x$ and~$y$ are~generators of~the~first and~second instances of~$B$, respectively. Clearly, when extended to~a~mapping of~words, $\zeta_{gen}$~takes all defining relations of~$\mathbb{B}$ to~the~equalities valid in~$\mathbb{E}$ and,~therefore, induces a~homomorphism $\zeta\colon \mathbb{B} \to \mathbb{E}$. It~is~also easy to~see that, if~$x_{1} \ldots x_{n}$ is~a~reduced form of~an~element $x \in \mathbb{B} \setminus \{1\}$, then the~product $x_{1}\zeta \ldots x_{n}\zeta$ is~a~reduced form of~the~element~$x\zeta$ and~$x\zeta \ne 1$. Hence, $\zeta$~is~injective. It~can also be~noted that, if~$\mathbb{E}$ satisfies~$(*)$, then $\mathfrak{U} = \operatorname{Aut}_{\mathbb{B}}(H)$.

\begin{eproposition}\label{ep311}
Suppose that the~group $\mathbb{E} = \langle B,t;\,t^{-1}Ht = K,\,\varphi \rangle$ satisfies~$(*)$ and~$\mathcal{C}$~is a~class of~groups closed under taking subgroups and~quotient groups. If~there exists a~homomorphism~$\sigma$ of~$\mathbb{E}$ onto~a~group from~$\mathcal{C}$ acting injectively on~$H$ and~$K$\textup{,} then $\mathfrak{U},\,\mathfrak{V} \in\nolinebreak \mathcal{C}$.
\end{eproposition}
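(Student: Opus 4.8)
The plan is to reduce the statement to an application of Proposition~\ref{ep304}, applied to suitable normal subgroups of the group $\mathbb{B} = \langle B * B;\,H = K,\,\varphi \rangle$ and of the amalgamated subgroup $L$-analogue inside a related generalized free product. The starting observation, already recorded in the excerpt, is that $\mathfrak{U} = \operatorname{Aut}_{\mathbb{B}}(H)$ and that $\zeta\colon \mathbb{B} \to \mathbb{E}$ is an embedding. So to prove $\mathfrak{U} \in \mathcal{C}$ it suffices, by Proposition~\ref{ep304}, to produce a homomorphism of $\mathbb{B}$ onto a $\mathcal{C}$-group acting injectively on $H$ (viewing $H$ as a normal subgroup of $\mathbb{B}$). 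The given homomorphism $\sigma\colon \mathbb{E} \to \mathbb{E}\sigma$ is injective on $H$ and on $K$; composing with $\zeta$ gives a homomorphism $\zeta\sigma\colon \mathbb{B} \to \mathbb{E}\sigma \in \mathcal{C}$. The point is that $\zeta\sigma$ is injective on the copy of $H$ sitting in the first factor of $\mathbb{B}$, because $\zeta$ sends that $H$ into $t^{-1}Ht = K$ and $\sigma$ is injective on $K$. Since $\mathcal{C}$ is closed under subgroups, $\operatorname{Im}(\zeta\sigma) \in \mathcal{C}$, and Proposition~\ref{ep304} yields $\operatorname{Aut}_{\mathbb{B}}(H) = \mathfrak{U} \in \mathcal{C}$.

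For $\mathfrak{V} \in \mathcal{C}$, I would run the parallel argument with $L = H \cap K$ in place of $H$. First I need the right ambient group: form the generalized free product $\mathbb{L} = \langle B * B;\,L = L,\,\varphi|_L \rangle$, amalgamating the copies of $L$ via the isomorphism $\varphi|_L \in \operatorname{Aut} L$ (which exists by condition~2 of $(*)$). One checks, exactly as for $\mathbb{B}$, that the analogous map $x \mapsto t^{-1}xt$, $y \mapsto y$ on generators induces an embedding $\eta\colon \mathbb{L} \hookrightarrow \mathbb{E}$, and that $L$ is normal in $\mathbb{L}$ with $\operatorname{Aut}_{\mathbb{L}}(L) = \mathfrak{V}$: indeed $\mathfrak{V} = \operatorname{sgp}\{\mathfrak{L}, \mathfrak{F}\}$ where $\mathfrak{L} = \operatorname{Aut}_B(L)$ comes from conjugation in the second factor and $\mathfrak{F} = \operatorname{sgp}\{\varphi|_L\}$ is exactly the extra automorphism introduced by the amalgamation, mirroring the identity $\mathfrak{U} = \operatorname{Aut}_{\mathbb{B}}(H)$. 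Then $\eta\sigma\colon \mathbb{L} \to \mathbb{E}\sigma \in \mathcal{C}$ is injective on the copy of $L$ in the first factor (because $\eta$ carries it into $t^{-1}Lt \leqslant t^{-1}Ht = K$ and $\sigma$ is injective on $K$, or alternatively into $t^{-1}Lt \leqslant K$ again), so Proposition~\ref{ep304} gives $\mathfrak{V} = \operatorname{Aut}_{\mathbb{L}}(L) \in \mathcal{C}$.

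The only real work is verifying the two structural claims about the auxiliary generalized free products: that $\eta$ (like $\zeta$) is injective, and that $\operatorname{Aut}_{\mathbb{L}}(L) = \mathfrak{V}$ (like $\operatorname{Aut}_{\mathbb{B}}(H) = \mathfrak{U}$). For injectivity of $\eta$ one uses the reduced-form argument: a reduced word in $\mathbb{L}$ maps under $\eta$ to an expression which, after rewriting $t^{-1}xt$ back in terms of the HNN-structure of $\mathbb{E}$, has a reduced form of the same length in $\mathbb{E}$, so it is non-trivial by Britton's lemma — this is verbatim the argument already given for $\zeta$ in the excerpt, and I would simply remark that it applies unchanged. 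For the automorphism identity, since $L \trianglelefteq \mathbb{L}$ and $\mathbb{L}$ is generated by the two copies of $B$ together with the amalgamation relations, $\operatorname{Aut}_{\mathbb{L}}(L)$ is generated by the inner automorphisms of $L$ induced by elements of each $B$-factor and these are both $\operatorname{Aut}_B(L) = \mathfrak{L}$; but conjugating across the amalgamation identifies the two copies of $L$ via $\varphi|_L$, which forces $\varphi|_L$ itself into $\operatorname{Aut}_{\mathbb{L}}(L)$, giving $\operatorname{Aut}_{\mathbb{L}}(L) = \operatorname{sgp}\{\mathfrak{L}, \mathfrak{F}\} = \mathfrak{V}$. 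I expect the mild subtlety here to be making precise that no automorphisms beyond those in $\operatorname{sgp}\{\mathfrak{L}, \mathfrak{F}\}$ arise — this follows because any element of $\mathbb{L}$ is a product of factors from the two copies of $B$, each contributing an element of $\mathfrak{L}$, interspersed with the amalgamation-induced identifications contributing powers of $\varphi|_L$.
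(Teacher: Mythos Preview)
Your argument for $\mathfrak{U} \in \mathcal{C}$ is correct and matches the paper exactly: compose the embedding $\zeta\colon \mathbb{B} \to \mathbb{E}$ with~$\sigma$, observe that the result is injective on~$H$ (since $H\zeta = K$ and~$\sigma$ is injective on~$K$), and apply Proposition~\ref{ep304}.

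Your argument for $\mathfrak{V} \in \mathcal{C}$, however, has a genuine gap. Neither of the two structural claims about~$\mathbb{L} = \langle B * B;\,L = L,\,\varphi|_{L} \rangle$ is true in general. First, the map~$\eta$ need not be injective: if, say, $B = \mathbb{Z}^{2} = \langle a,b \rangle$, $H = \langle a \rangle$, $K = \langle b \rangle$, $\varphi(a) = b$, then $L = 1$ and~$\mathbb{L} = B * B$, yet $\eta(a_{1}) = t^{-1}at = b = \eta(b_{2})$, so $\eta$ kills the non-trivial element~$a_{1}b_{2}^{-1}$. The reduced-form argument breaks down precisely because a syllable~$x_{i} \in B \setminus L$ may still lie in~$H$, so $t^{-1}x_{i}t$ collapses in~$\mathbb{E}$. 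Second, $\operatorname{Aut}_{\mathbb{L}}(L)$ is generated by~$\mathfrak{L}$ and~$\varphi|_{L}\mathfrak{L}\varphi|_{L}^{-1}$, not by~$\mathfrak{L}$ and~$\mathfrak{F}$: there is no element of~$\mathbb{L}$ whose conjugation action on~$L$ is~$\varphi|_{L}$ itself. For instance, if $B = L$ is abelian and~$\varphi|_{L} \ne 1$, then $\operatorname{Aut}_{\mathbb{L}}(L) = 1$ while $\mathfrak{V} = \mathfrak{F} \ne 1$.

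The fix is much simpler than your detour. Because $L$ is normal in~$B$ and $t^{-1}Lt = L\varphi = L$, the subgroup~$L$ is normal in~$\mathbb{E}$, and conjugation by~$t$ acts on~$L$ as~$\varphi|_{L}$; hence $\mathfrak{V} = \operatorname{sgp}\{\mathfrak{L},\mathfrak{F}\} = \operatorname{Aut}_{\mathbb{E}}(L)$. Since $\sigma$ is injective on~$H \supseteq L$, Proposition~\ref{ep304} applied directly to~$\mathbb{E}$ and its normal subgroup~$L$ gives $\mathfrak{V} \in \mathcal{C}$. This is exactly the paper's argument.
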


\begin{proof}
It~is~obvious that $\mathfrak{V} = \operatorname{Aut}_{\mathbb{E}}(L)$. Therefore, the~inclusion $\mathfrak{V} \in \mathcal{C}$ follows from~Proposition~\ref{ep304}. Let $\mathbb{B} = \langle B * B;\,H = K,\,\varphi \rangle$, and~let $\zeta\colon \mathbb{B} \to \mathbb{E}$ be~the~homomorphism defined above. Since $\sigma$ is~injective on~$K = K\zeta$ and~$\mathcal{C}$ is~closed under taking subgroups, $\mathbb{B}$~has a~homomorphism onto~a~group from~$\mathcal{C}$ acting injectively on~$H$ and~$K$. Hence, $\mathfrak{U} = \operatorname{Aut}_{\mathbb{B}}(H) \in \mathcal{C}$ by~the~same Proposition~\ref{ep304}.
\end{proof}

\begin{eproposition}\label{ep312}
Suppose that the~group $\mathbb{E} = \langle B,t;\,t^{-1}Ht = K,\,\varphi \rangle$ satisfies~$(*)$ and~$\mathcal{C}$~is a~root class of~groups closed under taking quotient groups. If~$\mathbb{E}$ is~residually a~$\mathcal{C}$\nobreakdash-group\textup{,} then $\mathfrak{U}$ and~$\mathfrak{V}$ are~also residually $\mathcal{C}$\nobreakdash-groups and\textup{,}~therefore\textup{,} the~groups~$\mathfrak{H}$\textup{,}~$\mathfrak{K}$\textup{,} $\mathfrak{L}$\textup{,} $\mathfrak{F}$\textup{,} $\mathfrak{U}$\textup{,} and~$\mathfrak{V}$ belong to~$\mathcal{C}$ when they are~finite.
\end{eproposition}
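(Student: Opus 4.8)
The plan is to reduce everything to the two embeddings already exhibited in the excerpt: $\mathfrak{V} = \operatorname{Aut}_{\mathbb{E}}(L)$ and $\mathfrak{U} = \operatorname{Aut}_{\mathbb{B}}(H)$, where $\mathbb{B} = \langle B * B;\,H = K,\,\varphi \rangle$ and the homomorphism $\zeta\colon \mathbb{B} \to \mathbb{E}$ is injective. First I would deal with $\mathfrak{V}$. Since $\mathbb{E}$ satisfies~$(*)$, the subgroup $L = H \cap K$ is normal in $\mathbb{E}$ (being the intersection of the normal subgroups $H$ and $K$ of $B$, which generate a normal subgroup of $\mathbb{E}$ together with the conjugation relation). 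Hence $\operatorname{Aut}_{\mathbb{E}}(L)$ is defined and equals $\mathfrak{V}$ by the remark before Proposition~\ref{ep311}. Now apply Proposition~\ref{ep305} with $X = \mathbb{E}$ and $Y = L$: since $\mathbb{E}$ is residually a $\mathcal{C}$\nobreakdash-group and $\mathcal{C}$ is closed under quotient groups, $\operatorname{Aut}_{\mathbb{E}}(L) = \mathfrak{V}$ is residually a $\mathcal{C}$\nobreakdash-group.

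Next I would handle $\mathfrak{U}$. The key point is that the embedding $\zeta\colon \mathbb{B} \hookrightarrow \mathbb{E}$ lets us transport residual $\mathcal{C}$\nobreakdash-ness: by Proposition~\ref{ep302} (with the class $\mathcal{C}$, which a root class is closed under taking subgroups), any subgroup of a residually $\mathcal{C}$\nobreakdash-group is again residually a $\mathcal{C}$\nobreakdash-group; since $\mathbb{B} \cong \mathbb{B}\zeta \leqslant \mathbb{E}$, the group $\mathbb{B}$ is residually a $\mathcal{C}$\nobreakdash-group. The subgroup $H$ is normal in $\mathbb{B}$ (as $H$ and $K$ are normal in the two copies of $B$, per the remark before Proposition~\ref{ep309}), so $\operatorname{Aut}_{\mathbb{B}}(H) = \mathfrak{U}$ is defined, and applying Proposition~\ref{ep305} once more — now with $X = \mathbb{B}$, $Y = H$ — yields that $\mathfrak{U}$ is residually a $\mathcal{C}$\nobreakdash-group.

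For the final clause, note $\mathfrak{H}, \mathfrak{K}$ are subgroups of $\mathfrak{U}$ and $\mathfrak{L}, \mathfrak{F}$ are subgroups of $\mathfrak{V}$, so all six of $\mathfrak{H}, \mathfrak{K}, \mathfrak{L}, \mathfrak{F}, \mathfrak{U}, \mathfrak{V}$ are residually $\mathcal{C}$\nobreakdash-groups by Proposition~\ref{ep302} again. A finite group that is residually a $\mathcal{C}$\nobreakdash-group lies in $\mathcal{C}$: indeed, by Proposition~\ref{ep303}(2) (using that a root class is closed under subgroups and finite direct products) a finite residually $\mathcal{C}$\nobreakdash-group $Y$ has some $N \in \mathcal{C}^{*}(Y)$ meeting $Y$ trivially, forcing $N = 1$ and $Y = Y/N \in \mathcal{C}$. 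Applying this to whichever of the six groups happen to be finite gives the assertion.

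The proof is essentially a bookkeeping exercise; the only mild subtlety — and the step I would be most careful about — is justifying that $L$ is normal in $\mathbb{E}$ and that $H$ is normal in $\mathbb{B}$, so that $\operatorname{Aut}_{\mathbb{E}}(L)$ and $\operatorname{Aut}_{\mathbb{B}}(H)$ are genuinely defined and coincide with $\mathfrak{V}$ and $\mathfrak{U}$; both facts are already recorded in the text preceding Propositions~\ref{ep309} and~\ref{ep311}, so no new argument is needed beyond citing them. No single step is a real obstacle.
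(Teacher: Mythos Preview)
Your proof is correct and follows essentially the same route as the paper: identify $\mathfrak{V} = \operatorname{Aut}_{\mathbb{E}}(L)$ and $\mathfrak{U} = \operatorname{Aut}_{\mathbb{B}}(H)$, use the embedding $\zeta$ together with Proposition~\ref{ep302} to get $\mathbb{B}$ residually a $\mathcal{C}$-group, apply Proposition~\ref{ep305} twice, and conclude with Proposition~\ref{ep303}. The only cosmetic difference is that you spell out the intermediate step that $\mathfrak{H},\mathfrak{K},\mathfrak{L},\mathfrak{F}$ inherit residual $\mathcal{C}$-ness as subgroups, whereas the paper leaves this implicit.
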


\begin{proof}
As~noted above, the~group $\mathbb{B} = \langle B * B;\,H = K,\,\varphi \rangle$ can be~embedded into~the~residually $\mathcal{C}$\nobreakdash-group~$\mathbb{E}$. Therefore, it~is~itself residually a~$\mathcal{C}$\nobreakdash-group by~Proposition~\ref{ep302}. Since $\mathfrak{U} = \operatorname{Aut}_{\mathbb{B}}(H)$ and~$\mathfrak{V} = \operatorname{Aut}_{\mathbb{E}}(L)$, Proposition~\ref{ep305} implies that $\mathfrak{U}$ and~$\mathfrak{V}$ are~also residually $\mathcal{C}$\nobreakdash-groups. The~inclusions $\mathfrak{H},\,\mathfrak{K},\,\mathfrak{L},\,\mathfrak{F},\,\mathfrak{U},\,\mathfrak{V} \in \mathcal{C}$ follows from~Proposition~\ref{ep303}.
\end{proof}

\section{Proof of~Theorems~\ref{et01}--\ref{et02} and~Corollary~\ref{ec01}}\label{es04}

\begin{eproposition}\label{ep401}
\textup{\cite[Theorem~1]{Tumanova2015IVM}}
If $\mathcal{C}$ is~a~root class of~groups\textup{,} $\mathbb{P} = \langle A * B;\,H = K,\,\varphi \rangle$\textup{,} $H$~is~normal in~$A$\textup{,} $K$~is~normal in~$B$\textup{,} and~$A,\,B,\,A/H,\,B/K,\,\operatorname{Aut}_{\mathbb{P}}(H) \in \mathcal{C}$\textup{,} then there exists a~homomorphism of~$\mathbb{P}$ onto~a~group from~$\mathcal{C}$ acting injectively on~$A$ and~$B$.
\end{eproposition}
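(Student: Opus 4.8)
The plan is to build the required homomorphism in two stages: first collapse the normal subgroup $H$ (hence also $K$, since $H=K$ inside $\mathbb{P}$) together with the Cartesian part of the resulting free product, which reduces the problem to a split extension $H\rtimes F$ with $F$ free; then reassemble the pieces through an unrestricted wreath product. So first I would record the elementary structural facts. Since $H\trianglelefteq A$, $K\trianglelefteq B$, the subgroups $A$ and $B$ generate $\mathbb{P}$, and $H=K$ as a subgroup of $\mathbb{P}$, the subgroup $H$ is normal in $\mathbb{P}$, the normal closure of $H$ in the amalgam coincides with $H$, and $\mathbb{P}/H\cong (A/H)*(B/K)$. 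Composing $\mathbb{P}\to\mathbb{P}/H$ with the canonical homomorphism $(A/H)*(B/K)\to (A/H)\times(B/K)$, I obtain a homomorphism $\pi\colon\mathbb{P}\to Q$ onto $Q=(A/H)\times(B/K)$, and $Q\in\mathcal{C}$ since a root class, being closed under extensions, is closed under finite direct products. Put $\mathbb{P}_1=\ker\pi$. For $a\in A$ one has $\pi(a)=(aH,1)$, whence $\mathbb{P}_1\cap A=H$, and symmetrically $\mathbb{P}_1\cap B=K=H$. Therefore it suffices to find a homomorphism of $\mathbb{P}_1$ onto a $\mathcal{C}$-group that is injective on $H$: from it the wreath-product machinery will produce the homomorphism required of $\mathbb{P}$.

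The subgroup $\mathbb{P}_1/H$ is precisely the kernel of $(A/H)*(B/K)\to(A/H)\times(B/K)$, i.e. the Cartesian subgroup of a free product; it acts freely on the Bass--Serre tree of that free product (all vertex stabilizers meet it trivially) and hence is a free group. An extension with free quotient splits, so $\mathbb{P}_1\cong H\rtimes F$ with $F\cong\mathbb{P}_1/H$ free. The conjugation action of $F$ on $H$ is a homomorphism $\theta\colon F\to\operatorname{Aut}H$ whose image consists of restrictions to $H$ of inner automorphisms of $\mathbb{P}$, hence lies in $\operatorname{Aut}_{\mathbb{P}}(H)$; in particular $\theta$ may be regarded as landing in $\operatorname{Aut}_{\mathbb{P}}(H)\le\operatorname{Aut}H$ acting naturally on $H$. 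Consequently the rule $(h,f)\mapsto(h,\theta(f))$ defines a homomorphism $\rho\colon\mathbb{P}_1=H\rtimes F\to H\rtimes\operatorname{Aut}_{\mathbb{P}}(H)$. Its codomain is an extension of $\operatorname{Aut}_{\mathbb{P}}(H)\in\mathcal{C}$ by $H\in\mathcal{C}$ (recall $H\le A\in\mathcal{C}$), so it belongs to $\mathcal{C}$; and $\rho$ is injective on $H$, so $\ker\rho\cap H=1$.

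To pass from $\rho$ back to $\mathbb{P}$, I would use the Kaloujnine--Krasner embedding $\mathbb{P}\hookrightarrow\mathbb{P}_1\wr Q$ for the extension $1\to\mathbb{P}_1\to\mathbb{P}\to Q\to 1$ and then apply $\rho$ coordinatewise to the base group, obtaining a homomorphism $\Psi\colon\mathbb{P}\to R\wr Q$ with $R=H\rtimes\operatorname{Aut}_{\mathbb{P}}(H)$. Here $R\wr Q\in\mathcal{C}$ because a root class is closed under unrestricted wreath products. A routine computation with a transversal of $\mathbb{P}_1$ in $\mathbb{P}$ shows that $\ker\Psi$ is the largest subgroup of $\ker\rho$ normal in $\mathbb{P}$; in particular $\ker\Psi\le\ker\rho\le\mathbb{P}_1$, so $\ker\Psi\cap A\le(\mathbb{P}_1\cap A)\cap\ker\rho=H\cap\ker\rho=1$ and likewise $\ker\Psi\cap B=K\cap\ker\rho=1$. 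Restricting the codomain to $\operatorname{Im}\Psi$, which lies in $\mathcal{C}$ as a subgroup of a $\mathcal{C}$-group, yields a homomorphism of $\mathbb{P}$ onto a group from $\mathcal{C}$ acting injectively on $A$ and $B$.

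I expect the crux to be the reduction to $\mathbb{P}_1$. Trying to separate $A$ and $B$ in $\mathbb{P}$ directly through the conjugation map $\mathbb{P}\to\operatorname{Aut}_{\mathbb{P}}(H)$ fails, because its kernel is $\mathcal{Z}_{\mathbb{P}}(H)$, which meets $A$ in $\mathcal{Z}_A(H)$, and the latter need not be trivial (it contains the centre of $H$). The splitting $\mathbb{P}_1\cong H\rtimes F$ with $F$ free is exactly what repairs this, since it turns the $F$-action on $H$ into an honest homomorphism of $\mathbb{P}_1$ that restricts to the identity on $H$. After that, the wreath-product ``coring'' step is standard for this circle of ideas, the only subtlety being that the wreath product must be taken unrestricted in order to remain inside $\mathcal{C}$ when $Q$ is infinite.
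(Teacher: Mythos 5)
Your argument is correct. The paper does not reproduce a proof of this proposition (it is cited from the 2015 paper of Tumanova), but your route --- pass to $Q=(A/H)\times(B/K)$, observe that the kernel $\mathbb{P}_1$ is an extension of $H$ by the free Cartesian subgroup and hence splits as $H\rtimes F$, and then use the map $hf\mapsto(h,\theta(f))$ into $H\rtimes\operatorname{Aut}_{\mathbb{P}}(H)$ --- is the same scheme the authors themselves use for the HNN analogue in Proposition~\ref{ep403}, where $V=\mathcal{Z}_{\mathbb{E}}(L)\cap F$ plays exactly the role of your $\ker\rho$. The only divergence is cosmetic: where the paper would apply Gruenberg's condition to the subnormal series $1\leqslant\ker\rho\leqslant\mathbb{P}_1\leqslant\mathbb{P}$ to produce a normal subgroup of $\mathbb{P}$ inside $\ker\rho$, you re-derive that step explicitly via the Kaloujnine--Krasner embedding and closure of $\mathcal{C}$ under unrestricted wreath products, which is precisely how conditions 1 and 2 of the root-class definition are shown to be equivalent.
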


Let the~group $\mathbb{E} = \langle B,t;\,t^{-1}Ht = K,\,\varphi \rangle$ satisfy~$(*)$ and~$\mathbb{B} = \langle B * B;\,H = K,\,\varphi \rangle$. Then the~subgroup~$K$ of~the~first free factor of~$\mathbb{B}$ and~the~subgroup~$H$ of~the~second one are~$(H,K,\varphi)$-com\-pat\-i\-ble. Therefore, the~generalized free product
\[
\mathbb{B}_{K,H} = \big\langle (B/K)*(B/H);\,HK/K = KH/H,\,\varphi_{K,H} \big\rangle
\]
and~the~group~$\operatorname{Aut}_{\mathbb{B}_{K,H}}(HK/K)$ are~defined.

\begin{eproposition}\label{ep402}
Suppose that the~group $\mathbb{E} = \langle B,t;\,t^{-1}Ht = K,\,\varphi \rangle$ satisfies~$(*)$\textup{,} $\mathcal{C}$~is a~root class of~groups\textup{,} and~$H \cap K = 1$. If~$B/H,\,B/K,\,B/HK,\,\operatorname{Aut}_{\mathbb{B}_{K,H}}(HK/K) \in \mathcal{C}$\textup{,} then there exists a~homomorphism of~$\mathbb{E}$ onto~a~group from~$\mathcal{C}$ acting injectively on~$B$.
\end{eproposition}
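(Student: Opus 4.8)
The plan is to apply Proposition~\ref{ep401} to the generalized free product $\mathbb{B}_{K,H}$ and then to fold the resulting homomorphism into a wreath product, using that $\mathcal{C}$ is closed under unrestricted wreath products. First I would check that Proposition~\ref{ep401} applies to
\[
\mathbb{B}_{K,H} = \big\langle (B/K)*(B/H);\,HK/K = KH/H,\,\varphi_{K,H} \big\rangle .
\]
Since $H$ and~$K$ are normal in~$B$ and~$H \cap K = 1$, the subgroup $HK = H \times K$ is normal in~$B$, hence $HK/K$ is normal in~$B/K$ and~$KH/H$ is normal in~$B/H$. By hypothesis $B/K,\,B/H \in \mathcal{C}$; moreover $(B/K)/(HK/K) \cong B/HK \cong (B/H)/(KH/H)$ lies in~$\mathcal{C}$, and~$\operatorname{Aut}_{\mathbb{B}_{K,H}}(HK/K) \in \mathcal{C}$. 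Thus Proposition~\ref{ep401} gives a homomorphism $\pi\colon \mathbb{B}_{K,H} \to G$ onto a group $G \in \mathcal{C}$ that is injective on~$B/K$ and on~$B/H$.

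Next I would pull $\pi$ back to~$B$: let $\psi_{1}\colon B \to G$ send $b$ to the $\pi$-image of~$bK$ regarded as an element of the first free factor $B/K$ of~$\mathbb{B}_{K,H}$, and let $\psi_{2}\colon B \to G$ send $b$ to the $\pi$-image of~$bH$ regarded in the second free factor $B/H$. As~$\pi$ is injective on the factors, $\ker\psi_{1} = K$ and~$\ker\psi_{2} = H$, so $\ker\psi_{1} \cap \ker\psi_{2} = H \cap K = 1$. Also $\psi_{1}|_{K}$ and~$\psi_{2}|_{H}$ are trivial (since $kK = 1$ in~$B/K$ and~$hH = 1$ in~$B/H$), while the amalgamation of~$\mathbb{B}_{K,H}$ identifies $hK$ with~$(h\varphi)H$, so that $h\psi_{1} = (h\varphi)\psi_{2}$ for every $h \in H$.

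Now I would fix a non-trivial cyclic group $D \in \mathcal{C}$ with a generator~$g$ — it exists because $\mathcal{C}$ contains a non-trivial group and is closed under subgroups — and let $W$ be the unrestricted wreath product of~$G$ with~$D$, so that $W \in \mathcal{C}$. For~$b \in B$, let $f_{b}\colon D \to G$ be the function taking the identity of~$D$ to~$b\psi_{2}$, the generator~$g$ to~$b\psi_{1}$, and every other element of~$D$ to~$1$. Define $\Phi$ on the generators of~$\mathbb{E}$ by sending $t$ to the element of~$W$ with trivial base component and top component~$g$, and each $b \in B$ to the element with base component~$f_{b}$ and trivial top component. The restriction of~$\Phi$ to~$B$ is a homomorphism because every coordinate $b \mapsto f_{b}(d)$ is one. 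To see that $\Phi$ respects the defining relation $t^{-1}ht = h\varphi$ ($h \in H$), note that conjugation by~$\Phi(t)$ translates the base component by one step, so it sends $f_{h}$ to the function $d \mapsto f_{h}(gd)$; for~$h \in H$ the triviality of~$\psi_{2}|_{H}$ makes $f_{h}$ supported only at~$g$, with value $h\psi_{1}$, hence $d \mapsto f_{h}(gd)$ supported only at the identity of~$D$, again with value $h\psi_{1}$, whereas the triviality of~$\psi_{1}|_{K}$ makes $f_{h\varphi}$ supported only at the identity, with value $(h\varphi)\psi_{2}$; these agree since $h\psi_{1} = (h\varphi)\psi_{2}$. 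Therefore $\Phi$ extends to a homomorphism $\mathbb{E} \to W$, and it is injective on~$B$: if $1 \ne b \in B$ then $b \notin \ker\psi_{1} \cap \ker\psi_{2}$, so $f_{b}$ is non-trivial and~$\Phi(b) \ne 1$. Replacing $W$ by~$\Phi(\mathbb{E})$, a subgroup of a $\mathcal{C}$-group and so a $\mathcal{C}$-group itself, gives the desired homomorphism of~$\mathbb{E}$ onto a $\mathcal{C}$-group acting injectively on~$B$.

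The one substantive step is the verification that $\Phi$ respects the HNN-relation; the construction is set up precisely so that it does, with~$\psi_{2}$ placed at the identity of~$D$ and~$\psi_{1}$ at the neighbouring index~$g$, so that a single shift moves the data of~$h$ through the lone transition governed by~$h\psi_{1} = (h\varphi)\psi_{2}$ while the triviality of~$\psi_{1}|_{K}$ and~$\psi_{2}|_{H}$ annihilates all other terms. This is exactly where $H \cap K = 1$ is essential: it forces $\ker\psi_{1} \cap \ker\psi_{2} = 1$, so that these two coordinates suffice to separate~$B$; without it the same map would only give $\ker\Phi \cap B = H \cap K$.
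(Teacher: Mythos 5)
Your proof is correct, but it takes a genuinely different route from the paper's. The paper assembles a chain of copies of~$B$ into a tree product (or, when $\mathcal{C}$ is periodic, a polygonal product on $n \geqslant 4$ vertices), invokes the embedding theorems of Karrass--Solitar and Allenby--Tang, forms the split extension of that product by the shift automorphism, applies Proposition~\ref{ep401} to each pair of adjacent factors, and finally descends to a $\mathcal{C}$\nobreakdash-quotient via Gruenberg's condition applied to a subnormal series. You instead apply Proposition~\ref{ep401} exactly once, to~$\mathbb{B}_{K,H}$ itself, and then write down an explicit homomorphism of~$\mathbb{E}$ into the unrestricted wreath product $G \wr D$, storing the two quotient maps $B \to B/K \to G$ and $B \to B/H \to G$ at two adjacent coordinates so that the shift by~$g$ implements~$\varphi$: the identification $hK = (h\varphi)H$ in~$\mathbb{B}_{K,H}$ gives precisely the compatibility $h\psi_{1} = (h\varphi)\psi_{2}$ needed for the HNN relation, and $H \cap K = 1$ gives $\ker\psi_{1} \cap \ker\psi_{2} = 1$ and hence injectivity on~$B$. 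This is shorter, avoids the normal-form and embedding machinery for tree and polygonal products (in particular the constraint $n \geqslant 4$ in the finite case --- any non-trivial cyclic $D \in \mathcal{C}$, even of order~$2$, works), and exploits the wreath-product closure of root classes directly rather than through Gruenberg's condition. The only point worth flagging is the wreath-product convention: your verification needs conjugation by~$\Phi(t)$ to move the value stored at the coordinate~$g$ to the coordinate~$1$; with the opposite convention one simply swaps the roles of $\psi_{1}$ and~$\psi_{2}$ or replaces $g$ by~$g^{-1}$, so nothing is lost.
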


\begin{proof}
If $\mathcal{C}$ contains non-pe\-ri\-od\-ic groups, we denote by~$\mathcal{I}$ the~additive group of~the~ring~$\mathbb{Z}$. Otherwise, let $\mathcal{I}$ be~the~additive group of~the~ring~$\mathbb{Z}_{n}$, where $n$ is~the~order of~some $\mathcal{C}$\nobreakdash-group and~$n \geqslant 4$. Since $\mathcal{C}$ is~closed under taking subgroups and~extensions, the~number~$n$ with~the~indicated properties exists and,~in~both cases, $\mathcal{I} \in \mathcal{C}$.

For each $i \in \mathcal{I}$, let $B_{i}$ denote an~isomorphic copy of~$B$. Let also $\beta_{i}\colon B \to B_{i}$ be~the~corresponding isomorphism, $H_{i} = H\beta_{i}$, and~$K_{i} = K\beta_{i}$. Consider the~group
\[
\mathfrak{P} = \langle B_{i};\,H_{i} = K_{i-1}\ (i \in \mathcal{I}) \rangle
\]
whose generators are~the~generators of~the~groups~$B_{i}$, $i \in \mathcal{I}$, and~whose defining relations are~those of~$B_{i}$, $i \in \mathcal{I}$, and~all possible relations of~the~form $h\varphi\beta_{i-1} = h\beta_{i}$, where $h \in H$ and~$i \in \mathcal{I}$. It~is~easy to~see that, if~$\mathcal{I}$ is~infinite, then $\mathfrak{P}$ is~the~tree product of~the~groups~$B_{i}$, $i \in \mathcal{I}$, that corresponds to~an~infinite chain. Otherwise, $\mathfrak{P}$~is~the~polygonal product of~the~same groups. Theorem~1 from~\cite{KarrasSolitar1970TAMS} says that, in~the~first case, the~identity mappings of~the~generators of~$B_{i}$, $i \in \mathcal{I}$, into~$\mathfrak{P}$ can be~extended to~injective homomorphisms. It~follows from~the~relations $H \cap K = 1$ and~$n \geqslant 4$ that the~same statement holds in~the~second case~\cite{AllenbyTang1989CMB}.

Let $\alpha_{gen}$ be~the~mapping of~the~generators of~$\mathfrak{P}$ acting as~the~isomorphisms~$\beta_{i}^{-1}\beta_{i+1}^{\vphantom{1}}$, $i \in \mathcal{I}$. Clearly, $\alpha_{gen}$~defines an~automorphism~$\alpha$ of~$\mathfrak{P}$, whose order is~equal to~the~order of~$\mathcal{I}$. Let $\mathfrak{Q}$ denote the~splitting extension of~$\mathfrak{P}$ by~the~cyclic group~$\langle \alpha \rangle$. Consider the~mapping~$\lambda_{gen}$ of~the~generators of~$\mathbb{E}$ into~$\mathfrak{Q}$ that acts on~the~generators of~$B$ as~$\beta_{0}$ and~takes~$t$ to~$\alpha$ (here and~below, we~identify the~groups~$B$, $B_{i}$, $i \in \mathcal{I}$, and~$\mathfrak{P}$ with~the~corresponding subgroups of~$\mathbb{E}$, $\mathfrak{P}$, and~$\mathfrak{Q}$, respectively). It~is~easy to~see that, when extended to~a~mapping of~words, $\lambda_{gen}$~takes all defining relations of~$\mathbb{E}$ to~the~equalities valid in~$\mathfrak{Q}$. Therefore, it~induces a~homomorphism $\lambda\colon \mathbb{E} \to \mathfrak{Q}$, which acts on~$B$ as~$\beta_{0}$. Since $\mathfrak{Q}$ is~obviously generated by~the~set $\{\alpha\} \cup \{b\beta_{0} \mid b \in B\}$, the~homomorphism~$\lambda$ is~surjective.

Let $i \in \mathcal{I}$. It~is~clear that $\beta_{i+1}$ and~$\beta_{i}$ induce an~isomorphism~$\gamma_{i}$ of~$\mathbb{B}_{K,H}$ onto~the~generalized free product
\[
\mathfrak{P}_{i} = \big\langle (B_{i+1}/K_{i+1})*(B_{i}/H_{i});\,H_{i+1}K_{i+1}/K_{i+1} = K_{i}H_{i}/H_{i},\,\varphi_{i} \big\rangle,
\]
where the~isomorphism $\varphi_{i}\colon H_{i+1}K_{i+1}/K_{i+1} \to K_{i}H_{i}/H_{i}$ is~given by~the~rule:
\[
(h\beta_{i+1})K_{i+1} \mapsto (h\varphi \beta_{i})H_{i}, \quad h \in H.
\]
The~relations
\[
(B/H)/(KH/H) \cong B/HK \cong (B/K)/(HK/K)
\]
and~$B/HK \in \mathcal{C}$ mean that all conditions of~Proposition~\ref{ep401} hold for~the~group~$\mathbb{B}_{K,H}$. Since $(B/K)\gamma_{i} = B_{i+1}/K_{i+1}$ and~$(B/H)\gamma_{i} = B_{i}/H_{i}$, it~follows that there exists a~homomorphism~$\eta_{i}$ of~$\mathfrak{P}_{i}$ onto~a~group from~$\mathcal{C}$ satisfying the~equalities
\[
\ker\eta_{i} \cap B_{i}/H_{i} = 1 = \ker\eta_{i} \cap B_{i+1}/K_{i+1}.
\]
Consider the~mapping of~the~generators of~$\mathfrak{P}$ into~$\mathfrak{P}_{i}$ which acts identically on~the~elements of~$B_{i}$ and~$B_{i+1}$, and~takes the~generators of~other free factors to~$1$. It~is~easy to~see that this mapping defines a~surjective homomorphism $\theta_{i}\colon \mathfrak{P} \to \mathfrak{P}_{i}$. Therefore, if~$M_{i}$ denotes the~subgroup $\ker\theta_{i}\eta_{i}$, we~have $M_{i} \in \mathcal{C}^{*}(\mathfrak{P})$, $M_{i} \cap B_{i} = H_{i}$, and~$M_{i} \cap B_{i+1} = K_{i+1}$.

Let $M = M_{0} \cap M_{-1}$. It~follows from~Proposition~\ref{ep303} and~the~above that $M \in \mathcal{C}^{*}(\mathfrak{P})$~and
\[
M \cap B_{0} = (M_{0} \cap B_{0}) \cap (M_{-1} \cap B_{0}) = H_{0} \cap K_{0} = (H \cap K)\beta_{0} = 1.
\]
Since $\langle \alpha \rangle \cong \mathcal{I} \in \mathcal{C}$, the~factors of~the~subnormal sequence $M \leqslant \mathfrak{P} \leqslant \mathfrak{Q}$ belong to~$\mathcal{C}$. Hence, by~Gruenberg's condition, there exists a~subgroup $N \in \mathcal{C}^{*}(\mathfrak{Q})$ lying in~$M$. It~now follows from~the~relations $B_{0} = B\lambda$ and~$N \cap B_{0} \leqslant M \cap B_{0} = 1$ that the~composition of~$\lambda$ and~the~natural homomorphism $\mathfrak{Q} \to \mathfrak{Q}/N$ is~the~desired mapping.
\end{proof}

\begin{eproposition}\label{ep403}
Suppose that the~group $\mathbb{E} = \langle B,t;\,t^{-1}Ht = K,\,\varphi \rangle$ satisfies~$(*)$ and~$\mathcal{C}$~is a~root class of~groups. If~$L,\,B/H,\,B/K,\,B/HK,\,\operatorname{Aut}_{\mathbb{E}}(L),\,\operatorname{Aut}_{\mathbb{B}_{K,H}}(HK/K) \in \mathcal{C}$\textup{,} then there exists a~homomorphism of~$\mathbb{E}$ onto~a~group from~$\mathcal{C}$ acting injectively on~$B$.
\end{eproposition}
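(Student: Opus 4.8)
The plan is to reduce to Proposition~\ref{ep402} by passing to the quotient $\mathbb{E}/L$ and then to carry the homomorphism so obtained back up across~$L$, using the closure of~$\mathcal{C}$ under extensions and under unrestricted wreath products. First, $L$ is normal in~$\mathbb{E}$: it is normal in~$B$ as the intersection of two normal subgroups, and $t^{-1}Lt = L\varphi = L = L\varphi^{-1} = tLt^{-1}$ since $L \leqslant H \cap K$ and $\varphi|_{L} \in \operatorname{Aut}L$. Hence $L$ is $(H,K,\varphi)$-compatible, because $(L \cap H)\varphi = L\varphi = L = L \cap K$; so the HNN-extension $\mathbb{E}_{L} = \langle B/L,t;\,t^{-1}(H/L)t = K/L,\,\varphi_{L}\rangle$ is defined, it equals $\mathbb{E}/L$ with $\ker\rho_{L} = L$, and its associated subgroups meet trivially because $(H/L) \cap (K/L) = (H \cap K)/L = 1$. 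A routine check identifies $(B/L)/(H/L)$, $(B/L)/(K/L)$, $(B/L)/(HK/L)$ with $B/H$, $B/K$, $B/HK$, and shows that the group $\mathbb{B}_{K,H}$ and its subgroup $HK/K$ formed from~$\mathbb{E}_{L}$ coincide with those formed from~$\mathbb{E}$. Thus Proposition~\ref{ep402} applies to~$\mathbb{E}_{L}$ and produces a homomorphism $\bar\tau$ of $\mathbb{E}_{L}$ onto a group $C \in \mathcal{C}$ acting injectively on~$B/L$. Set $\tau = \rho_{L}\bar\tau\colon \mathbb{E} \to C$ and $\mathbb{E}_{1} = \ker\tau$; then $\mathbb{E}_{1} \cap B = L$, $\mathbb{E}/\mathbb{E}_{1} \cong C$, and $\mathbb{E}_{1}/L \cong \ker\bar\tau$ is free by Proposition~\ref{ep309} applied to~$\mathbb{E}_{L}$.

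The core of the argument, and the main obstacle, is to obtain a homomorphism of~$\mathbb{E}$ onto a $\mathcal{C}$-group that is injective on all of~$L$ at once --- the naive conjugation homomorphism $\mathbb{E} \to \operatorname{Aut}_{\mathbb{E}}(L) = \mathfrak{V}$ only separates~$L$ modulo its centre. Here the hypotheses $L \in \mathcal{C}$ and $\mathfrak{V} \in \mathcal{C}$ come in. Since $\mathbb{E}_{1}/L$ is free, the extension $1 \to L \to \mathbb{E}_{1} \to \mathbb{E}_{1}/L \to 1$ splits; write $\mathbb{E}_{1} = L \rtimes F$ with $F$ free. Conjugation inside~$\mathbb{E}$ makes~$F$ act on~$L$ with image $\operatorname{Aut}_{F}(L) \leqslant \operatorname{Aut}_{\mathbb{E}}(L) = \mathfrak{V} \in \mathcal{C}$. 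Put $F_{0} = \mathcal{Z}_{F}(L)$: it is normal in~$F$ and, centralizing~$L$, is normalized by~$L$, hence normal in~$\mathbb{E}_{1}$; furthermore $\mathbb{E}_{1}/F_{0} \cong L \rtimes \operatorname{Aut}_{F}(L)$ lies in~$\mathcal{C}$ because $\mathcal{C}$ is closed under extensions, and $F_{0} \cap L \leqslant F \cap L = 1$. Therefore the projection $\psi\colon \mathbb{E}_{1} \to \mathbb{E}_{1}/F_{0}$ is a homomorphism onto a $\mathcal{C}$-group that is injective on~$L$.

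It remains to spread $\psi$ over the quotient $\mathbb{E}/\mathbb{E}_{1} \cong C$. For a transversal~$s$ of~$\mathbb{E}_{1}$ in~$\mathbb{E}$ with $s(1) = 1$, the standard construction of a homomorphism from an extension into a wreath product (generalizing the Kaloujnine--Krasner embedding) gives a homomorphism $\Psi\colon \mathbb{E} \to (\mathbb{E}_{1}/F_{0}) \wr C$ into the unrestricted wreath product --- which is a $\mathcal{C}$-group since $\mathcal{C}$ is closed under unrestricted wreath products --- such that the composition of~$\Psi$ with the projection onto~$C$ is the natural map $\mathbb{E} \to \mathbb{E}/\mathbb{E}_{1} = C$, and such that $\ker\Psi = \bigcap_{q \in C} s(q)(\ker\psi)s(q)^{-1} \leqslant \ker\psi$. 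Consequently $\ker\Psi \cap L \leqslant \ker\psi \cap L = 1$, so $\Psi$ is injective on~$L$; and if $b \in B$ with $\Psi(b) = 1$, then $b \in \mathbb{E}_{1} \cap B = L$, whence $b = 1$. Thus $\Psi$ maps~$\mathbb{E}$ onto the $\mathcal{C}$-group $\operatorname{Im}\Psi$ and acts injectively on~$B$, as required. Note that the splitting $\mathbb{E}_{1} = L \rtimes F$ is available only because $\ker\bar\tau$ is free, so Proposition~\ref{ep309} and the wreath-product closure of~$\mathcal{C}$ are both indispensable here.
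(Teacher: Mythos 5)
Your argument is correct and follows the paper's proof essentially step for step: the same reduction to Proposition~\ref{ep402} via the quotient~$\mathbb{E}_{L}$, the same appeal to Proposition~\ref{ep309} to split the kernel $U = \ker\rho_{L}\tau_{L}$ as $L \rtimes F$ with $F$ free, and the same key subgroup $F_{0} = F \cap \mathcal{Z}_{\mathbb{E}}(L)$ (the paper's~$V$), with $U/F_{0} \in \mathcal{C}$ for the same reasons. The only difference is cosmetic: where the paper applies Gruenberg's condition to the subnormal series $1 \leqslant V \leqslant U \leqslant \mathbb{E}$ to produce a subgroup $W \in \mathcal{C}^{*}(\mathbb{E})$ inside~$V$, you unwind that axiom into its equivalent wreath-product formulation via the explicit Kaloujnine--Krasner construction.
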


\begin{proof}
Since $L$ is~a~normal subgroup of~$B$ and~$(L \cap H)\varphi = L\varphi = L = L \cap K$, the~HNN-ex\-ten\-sion
\[
\mathbb{E}_{L} = \big\langle B/L,t;\,t^{-1}(H/L)t = K/L,\,\varphi_{L} \big\rangle
\]
is~defined. Consider the~following groups:
\begin{multline*}
\begin{aligned}
&\mathbb{B}_{K,H} = \big\langle (B/K)*(B/H);\,HK/K = KH/H,\,\varphi_{K,H} \big\rangle,\\
&\mathbb{B}_{K/L,H/L} = \big\langle (B/L)/(K/L) * (B/L)/(H/L);
\end{aligned}
\\
(H/L)(K/L)/(K/L) = (K/L)(H/L)/(H/L),\,\varphi_{K/L,H/L} \big\rangle.
\end{multline*}

It~is~clear that the~identity mapping of~the~generators of~$\mathbb{B}_{K/L,H/L}$ into~$\mathbb{B}_{K,H}$ defines an~isomorphism, which takes the~subgroup $(H/L)(K/L)/(K/L)$ onto~$HK/K$. Therefore,
\[
\operatorname{Aut}_{\mathbb{B}_{K/L,H/L}}\big((H/L)(K/L)/(K/L)\big) \cong \operatorname{Aut}_{\mathbb{B}_{K,H}}(HK/K) \in \mathcal{C}.
\]
Since
\begin{gather*}
(B/L)/(H/L) \cong B/H \in \mathcal{C}, \quad 
(B/L)/(K/L) \cong B/K \in \mathcal{C}, \\
(B/L)/(H/L)(K/L) \cong B/HK \in \mathcal{C}, \quad
H/L \cap K/L = 1,
\end{gather*}
the~HNN-ex\-ten\-sion $\mathbb{E}_{L}$ satisfies the~conditions of~Proposition~\ref{ep402}. Hence, there exists a~homomorphism $\tau_{L}$ of~$\mathbb{E}_{L}$ onto~a~group from~$\mathcal{C}$ which is~injective on~$B/L$. By~Proposition~\ref{ep309}, the~kernel of~$\tau_{L}$ is~a~free group.

Since $L$ is~normal in~$\mathbb{E}$, the~equality $L = \ker\rho_{L}$ holds. Therefore, the~subgroup $U = \ker\rho_{L}\tau_{L}$ is~an~extension of~$L$ by~a~free group. It~is~well known that such an~extension is~splittable, i.e.,~$U$ has a~free subgroup~$F$ satisfying the~relations $U = LF$ and~$L \cap F = 1$.

Let $\xi\colon \mathbb{E} \to \operatorname{Aut}L$ be~the~homomorphism taking an~element $x \in \mathbb{E}$ to~the~automorphism~$\widehat{x}|_{L}$. Its kernel obviously coincides with~the~centralizer~$\mathcal{Z}_{\mathbb{E}}(L)$ of~$L$ in~$\mathbb{E}$. Since $\mathcal{C}$ is~closed under taking subgroups and~extensions, it~follows from~this fact and~the~relations
\[
U = LF, \ \ L \cap F = 1, \ \ L \leqslant B, \ \ 
F\mathcal{Z}_{\mathbb{E}}(L)/\mathcal{Z}_{\mathbb{E}}(L) \leqslant \mathbb{E}/\mathcal{Z}_{\mathbb{E}}(L), \ \ 
\operatorname{Im}\xi = \operatorname{Aut}_{\mathbb{E}}(L) \in \mathcal{C}
\]
that the~subgroup $V = \mathcal{Z}_{\mathbb{E}}(L) \cap F$ is~normal in~$U$,
\begin{gather*}
F/V \cong F\mathcal{Z}_{\mathbb{E}}(L)/\mathcal{Z}_{\mathbb{E}}(L) \in \mathcal{C}, \quad LV/V \cong L/L \cap V \cong L \in \mathcal{C},\\
U/LV = LF/LV \cong F/V(L \cap F) = F/V \in \mathcal{C},
\end{gather*}
and~the~quotient group~$U/V$ belongs to~$\mathcal{C}$ as~an~extension of~$LV/V$ by~a~group isomorphic to~$U/LV$. In~addition, $U \in \mathcal{C}^{*}(\mathbb{E})$ due~to~the~definition of~$\tau_{L}$. Thus, we~can apply Gruenberg's condition to~the~subnormal series $1 \leqslant V \leqslant U \leqslant \mathbb{E}$ and~find a~subgroup $W \in \mathcal{C}^{*}(\mathbb{E})$ lying in~$V$.

Since $\tau_{L}$ acts injectively on~$B/L$, the~equality $U \cap B = L$ holds. It~now follows from~the~inclusions $W \leqslant V \leqslant F \leqslant U$ that $W \cap B \leqslant F \cap (U \cap B) = F \cap L = 1$. Hence, the~natural homomorphism $\mathbb{E} \to \mathbb{E}/W$ is~the~desired one.
\end{proof}

\begin{proof}[\textup{\textbf{Proof of~Theorem~\ref{et01}}}]
The~implication $1 \Rightarrow 2$ and~the~residual $\mathcal{C}$\nobreakdash-ness of~$\mathbb{E}$ follow from Propositions~\ref{ep311} and~\ref{ep310}, respectively. To~prove the~implication $2 \Rightarrow 1$, it~is~sufficient to~show that all conditions of~Proposition~\ref{ep403} hold if $\mathfrak{U},\,\mathfrak{V} \in \mathcal{C}$.

Indeed, since $\mathcal{C}$ is~closed under taking subgroups and~quotient groups, the~inclusions $L,\,B/H,\,B/K,\,B/HK \in \mathcal{C}$ follow from~the~condition $B \in \mathcal{C}$. Let $\mathbb{B} = \langle B * B;\,H = K,\,\varphi \rangle$. By~Proposition~\ref{ep306}, the~relations $\operatorname{Aut}_{\mathbb{B}}(H) = \mathfrak{U} \in \mathcal{C}$, $\mathbb{B}_{K,H} = \mathbb{B}\rho_{K,H}$, and~$HK/K = H\rho_{K,H}$ imply that $\operatorname{Aut}_{\mathbb{B}_{K,H}}(HK/K) \in \mathcal{C}$. It~remains to~note that $\operatorname{Aut}_{\mathbb{E}}(L) = \mathfrak{V} \in \mathcal{C}$.
\end{proof}

\begin{eproposition}\label{ep404}
Suppose that the~group $\mathbb{E} = \langle B,t;\,t^{-1}Ht = K,\,\varphi \rangle$ satisfies~$(*)$\textup{,} $Q$~is a~normal $(H,K,\varphi)$-com\-pat\-i\-ble subgroup of~$B$\textup{,}~and
\[
\mathbb{E}_{Q} = \langle B/Q,t;\,t^{-1}(HQ/Q)t = KQ/Q,\,\varphi_{Q} \rangle.
\]
Suppose also that the~symbols~$\mathfrak{H}_{Q}$\textup{,} $\mathfrak{K}_{Q}$\textup{,} and~$\mathfrak{L}_{Q}$ denote the~subgroups
\[
\operatorname{Aut}_{B/Q}^{\vphantom{1}}(HQ/Q), \quad 
\varphi_{Q}^{\vphantom{1}}\operatorname{Aut}_{B/Q}^{\vphantom{1}}(KQ/Q)\varphi_{Q}^{-1}, \quad 
\text{and} \quad \operatorname{Aut}_{B/Q}^{\vphantom{1}}(LQ/Q),
\]
respectively. Then the~following statements hold.

\textup{1.\hspace{1ex}}There exists a~homomorphism of~$\mathfrak{U}$ onto~the~group $\mathfrak{U}_{Q} = \operatorname{sgp}\{\mathfrak{H}_{Q},\,\mathfrak{K}_{Q}\}$ which maps the~subgroups~$\mathfrak{H}$ and~$\mathfrak{K}$ onto~$\mathfrak{H}_{Q}$ and~$\mathfrak{K}_{Q}$\textup{,} respectively.

\textup{2.\hspace{1ex}}The~subgroup~$LQ/Q$ is~$\varphi_{Q}$\nobreakdash-in\-var\-i\-ant and~there exists a~homomorphism of~$\mathfrak{V}$ onto the~group $\mathfrak{V}_{Q} = \operatorname{sgp}\{\mathfrak{L}_{Q},\,\varphi_{Q}|_{LQ/Q}\}$ which maps $\mathfrak{L}$ and~$\mathfrak{F}$ onto~the~subgroups~$\mathfrak{L}_{Q}$ and $\mathfrak{F}_{Q} = \langle \varphi_{Q}|_{LQ/Q} \rangle$\textup{,} respectively.
\end{eproposition}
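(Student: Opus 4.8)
The plan is to exploit Proposition~\ref{ep306} as the engine: every time we have a group $X$ with a normal subgroup $Y$ and a homomorphism $\sigma$, we obtain a surjection $\bar\sigma\colon \operatorname{Aut}_X(Y)\to \operatorname{Aut}_{X\sigma}(Y\sigma)$. Here the ambient group is $B$ and the homomorphism is the natural projection $\pi\colon B\to B/Q$. First I would record that $H$, $K$, and $L$ are all normal in $B$ (the first two by $(*)$, and $L=H\cap K$ as an intersection of normal subgroups), so that $\operatorname{Aut}_B(H)$, $\operatorname{Aut}_B(K)$, $\operatorname{Aut}_B(L)$ are all defined; likewise $HQ/Q$, $KQ/Q$, $LQ/Q$ are normal in $B/Q$, so the target groups $\mathfrak{H}_Q$, $\mathfrak{K}_Q$, $\mathfrak{L}_Q$ make sense. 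Applying Proposition~\ref{ep306} with $\sigma=\pi$ and $Y\in\{H,K,L\}$ yields three surjective homomorphisms
\[
\bar\pi_H\colon \operatorname{Aut}_B(H)\twoheadrightarrow \operatorname{Aut}_{B/Q}(HQ/Q),\quad
\bar\pi_K\colon \operatorname{Aut}_B(K)\twoheadrightarrow \operatorname{Aut}_{B/Q}(KQ/Q),\quad
\bar\pi_L\colon \operatorname{Aut}_B(L)\twoheadrightarrow \operatorname{Aut}_{B/Q}(LQ/Q),
\]
each sending $\widehat{b}|_Y$ to $\widehat{bQ}|_{YQ/Q}$.

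For statement~1, the subtlety is that $\mathfrak{H}$, $\mathfrak{K}$, and $\mathfrak{U}$ all live inside $\operatorname{Aut}H$, whereas $\mathfrak{H}_Q=\operatorname{Aut}_{B/Q}(HQ/Q)$ lives in $\operatorname{Aut}(HQ/Q)$ but $\mathfrak{K}_Q=\varphi_Q\operatorname{Aut}_{B/Q}(KQ/Q)\varphi_Q^{-1}$ has been conjugated into $\operatorname{Aut}(HQ/Q)$ as well. So the second surjection I actually want is $\varphi_Q\circ\bar\pi_K\circ(\text{identification of }\operatorname{Aut}_B(K)\text{ conjugated by }\varphi\text{ with its image }\mathfrak{K})$; concretely, $\mathfrak{K}=\varphi\operatorname{Aut}_B(K)\varphi^{-1}$, and I would check that the assignment $\varphi\widehat{b}|_K\varphi^{-1}\mapsto \varphi_Q\widehat{bQ}|_{KQ/Q}\varphi_Q^{-1}$ is well-defined and surjective onto $\mathfrak{K}_Q$ — this is immediate from $\bar\pi_K$ together with the compatibility $\varphi_Q(hQ)=(h\varphi)Q$ built into the definition of $\varphi_Q$. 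The key point then is that $\bar\pi_H$ and (the conjugated) $\bar\pi_K$ \emph{agree on the overlap} $\mathfrak{H}\cap\mathfrak{K}$, which will let me glue them into a single homomorphism on $\mathfrak{U}=\operatorname{sgp}\{\mathfrak{H},\mathfrak{K}\}$. I would argue this by the following device, which I expect to be the cleanest route and also handles statement~2 for free: use the embedding $\zeta\colon\mathbb{B}\hookrightarrow\mathbb{E}$ from the excerpt, under which $\mathfrak{U}=\operatorname{Aut}_{\mathbb{B}}(H)$ where $\mathbb{B}=\langle B*B;\,H=K,\varphi\rangle$; apply $\rho_Q$ (or rather the analogous $\rho_{K,H}$-type map) to pass to $\mathbb{B}_Q=\langle B/Q*B/Q;\,HQ/Q=KQ/Q,\varphi_Q\rangle$, note $\mathfrak{U}_Q=\operatorname{Aut}_{\mathbb{B}_Q}(HQ/Q)$, and invoke Proposition~\ref{ep306} \emph{once}, with ambient group $\mathbb{B}$, normal subgroup $H$, and homomorphism the projection $\mathbb{B}\to\mathbb{B}_Q$. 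This single application produces a surjection $\mathfrak{U}=\operatorname{Aut}_{\mathbb{B}}(H)\twoheadrightarrow\operatorname{Aut}_{\mathbb{B}_Q}(HQ/Q)=\mathfrak{U}_Q$, and since the image of $\operatorname{Aut}_B(H)$ (first free factor) is visibly $\mathfrak{H}_Q$ and the image of $\varphi\operatorname{Aut}_B(K)\varphi^{-1}$ (second free factor, conjugated) is $\mathfrak{K}_Q$, the restriction statements fall out automatically.

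For statement~2, I first need the $\varphi_Q$-invariance of $LQ/Q$: since $\varphi_Q(hQ)=(h\varphi)Q$ for $h\in H$, and $L$ is $\varphi$-invariant with $L\varphi=L$, we get $\varphi_Q(LQ/Q)=L\varphi\cdot Q/Q=LQ/Q$, so $\varphi_Q|_{LQ/Q}\in\operatorname{Aut}(LQ/Q)$ and $\mathfrak{V}_Q=\operatorname{sgp}\{\mathfrak{L}_Q,\varphi_Q|_{LQ/Q}\}$ is defined. Then the same $\mathbb{B}$-trick applies with $L$ in place of $H$: in $\mathbb{E}$ one has $\mathfrak{V}=\operatorname{Aut}_{\mathbb{E}}(L)$ (noted in the proof of Proposition~\ref{ep311}), and $\mathfrak{L}=\operatorname{Aut}_B(L)$ sits inside while $\mathfrak{F}=\langle\varphi|_L\rangle$ is realized by the stable letter $t$, which acts on $L\le B$ exactly as $\varphi|_L$; applying Proposition~\ref{ep306} with ambient group $\mathbb{E}$, normal subgroup $L$, and homomorphism $\rho_Q\colon\mathbb{E}\to\mathbb{E}_Q$ gives a surjection $\mathfrak{V}=\operatorname{Aut}_{\mathbb{E}}(L)\twoheadrightarrow\operatorname{Aut}_{\mathbb{E}_Q}(LQ/Q)=\mathfrak{V}_Q$ carrying $\widehat{b}|_L\mapsto\widehat{bQ}|_{LQ/Q}$ (hence $\mathfrak{L}\twoheadrightarrow\mathfrak{L}_Q$) and $\widehat{t}|_L\mapsto\widehat{t}|_{LQ/Q}$, i.e.\ $\varphi|_L\mapsto\varphi_Q|_{LQ/Q}$ (hence $\mathfrak{F}\twoheadrightarrow\mathfrak{F}_Q$). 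The main obstacle is purely bookkeeping: verifying that under the embedding $\zeta$ and the projection $\rho_Q$, the subgroup $\operatorname{Aut}_{\mathbb{B}}(H)$ really is generated by the images of $\operatorname{Aut}_B(H)$ and $\varphi\operatorname{Aut}_B(K)\varphi^{-1}$ and that these map correctly — but this is already essentially asserted in the paragraph of the excerpt defining $\zeta$, where it is noted that $\mathfrak{U}=\operatorname{Aut}_{\mathbb{B}}(H)$, together with the remark preceding Proposition~\ref{ep401} that $\operatorname{Aut}_{\mathbb{P}}(H)$ is generated by $\operatorname{Aut}_A(H)$ and $\varphi\operatorname{Aut}_B(K)\varphi^{-1}$. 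So no genuinely hard step arises; the whole proposition is two invocations of Proposition~\ref{ep306} dressed up with the standard identifications of $\mathfrak{U}$ and $\mathfrak{V}$ as automizer groups inside $\mathbb{B}$ and $\mathbb{E}$.
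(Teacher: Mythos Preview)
Your proposal is correct and ultimately follows the same route as the paper: one application of Proposition~\ref{ep306} to the projection $\mathbb{B}\to\mathbb{B}_{Q,Q}$ (your $\mathbb{B}_Q$) for Statement~1, and one application to $\rho_Q\colon\mathbb{E}\to\mathbb{E}_Q$ for Statement~2, using the identifications $\mathfrak{U}=\operatorname{Aut}_{\mathbb{B}}(H)$ and $\mathfrak{V}=\operatorname{Aut}_{\mathbb{E}}(L)$. Your preliminary idea of applying Proposition~\ref{ep306} three times to $\pi\colon B\to B/Q$ and then gluing on $\mathfrak{H}\cap\mathfrak{K}$ is an unnecessary detour that you rightly abandon; the paper goes straight to the $\mathbb{B}$- and $\mathbb{E}$-level argument.
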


\begin{proof}
1.\hspace{1ex}Let $\mathbb{B} = \langle B * B;\,H = K,\,\varphi \rangle$. Since $Q$ is~$(H,K,\varphi)$-com\-pat\-i\-ble, the~generalized free product
\[
\mathbb{B}_{Q,Q} = \langle B/Q * B/Q;\,HQ/Q = KQ/Q,\,\varphi_{Q,Q} \rangle
\]
is defined. It~follows from~Proposition~\ref{ep306} that the~map $\overline{\rho_{Q,Q}}\colon\! \operatorname{Aut}_{\mathbb{B}}(H) \to \operatorname{Aut}_{\mathbb{B}_{Q,Q}}(H\rho_{Q,Q})$ given by~the~rule $\widehat{x}|_{H} \mapsto \widehat{x\rho_{Q,Q}}|_{H\rho_{Q,Q}}$, $x \in \mathbb{B}$, is~a~surjective homomorphism. Clearly, $\mathfrak{U}_{Q} = \operatorname{Aut}_{\mathbb{B}_{Q,Q}}(HQ/Q)$, $\mathfrak{U} = \operatorname{Aut}_{\mathbb{B}}(H)$, $H\rho_{Q,Q} = HQ/Q$, $\mathfrak{H}\overline{\rho_{Q,Q}} = \mathfrak{H}_{Q}$, and~$\mathfrak{K}\overline{\rho_{Q,Q}} = \mathfrak{K}_{Q}$. Therefore, the~homomorphism $\overline{\rho_{Q,Q}}$ is~desired.

2.\hspace{1ex}The~equality $L\varphi = L$ and~the~definition of~$\varphi_{Q}$ imply that $(LQ/Q)\varphi_{Q} = LQ/Q$. Since $\varphi_{Q}$ is~an~isomorphism, this relation means that $\varphi_{Q}|_{LQ/Q} \in \operatorname{Aut}LQ/Q$. The~existence of~the~desired homomorphism is~ensured by~Proposition~\ref{ep306} due~to~the~equalities $\mathfrak{V}_{Q} =\nolinebreak \operatorname{Aut}_{\mathbb{E}_{Q}}(LQ/Q)$, $\mathfrak{V} = \operatorname{Aut}_{\mathbb{E}}(L)$, and~$LQ/Q = L\rho_{Q}$.
\end{proof}

\begin{proof}[\textup{\textbf{Proof of~Theorem~\ref{et02}}}]
Statement~2 follows from~Statement~1 and~Proposition~\ref{ep310}. Let us prove Statement~1. If~there exists a~homomorphism of~$\mathbb{E}$ onto~a~group from~$\mathcal{C}$ that extends~$\sigma$, then $\mathfrak{U},\,\mathfrak{V} \in \mathcal{C}$ by~Proposition~\ref{ep311}. It~remains to~show that the~converse also holds.

Let $Q = \ker\sigma$. Then $B/Q \in \mathcal{C}$ and~it follows from~the~equality $Q \cap HK = 1$ that $Q \cap H = 1 = Q \cap K$ and~$HQ/Q \cap KQ/Q = LQ/Q$. Hence, the~groups~$\mathbb{E}_{Q}$, $\mathfrak{U}_{Q}$ and~$\mathfrak{V}_{Q}$ can be~defined as~in~Proposition~\ref{ep404}. By~this proposition, the~subgroup~$LQ/Q$ is~$\varphi_{Q}$\nobreakdash-in\-var\-i\-ant. Since $\mathfrak{U},\,\mathfrak{V} \in \mathcal{C}$ and~$\mathcal{C}$ is~closed under taking quotient groups, Proposition~\ref{ep404} also implies that $\mathfrak{U}_{Q},\,\mathfrak{V}_{Q} \in \mathcal{C}$. By~Theorem~\ref{et01}, it~follows from~these relations and~the~equality $HQ/Q \cap\nolinebreak KQ/Q = LQ/Q$ that there exists a~homomorphism~$\tau$ of~the~group~$\mathbb{E}_{Q}$ which satisfies the~conditions $\ker\tau \cap B/Q = 1$ and~$\operatorname{Im}\tau \in \mathcal{C}$. Since $\rho_{Q}$ extends~$\sigma$, the~composition~$\rho_{Q}\tau$ is~the~desired mapping.
\end{proof}

\begin{proof}[\textup{\textbf{Proof of~Corollary~\ref{ec01}}}]
\textit{Necessity.}
Proposition~\ref{ep303} states that there exists a~homomorphism of~$\mathbb{E}$ onto~a~group from~$\mathcal{C}$ acting injectively on~the~finite subgroup~$HK$. Hence, $\mathfrak{U},\,\mathfrak{V} \in \mathcal{C}$ by~Proposition~\ref{ep311}. The~residual $\mathcal{C}$\nobreakdash-ness of~$B$ is~ensured by~Proposition~\ref{ep302}.

\textit{Sufficiency.}
As~above, if~$B$ is~residually a~$\mathcal{C}$\nobreakdash-group, it~has a~homomorphism onto~a~group from~$\mathcal{C}$ acting injectively on~$HK$. Therefore, we~can use Statement~2 of~Theorem~\ref{et02} to~prove the~residual $\mathcal{C}$\nobreakdash-ness of~$\mathbb{E}$.
\end{proof}

\section{Proof of~Theorem~\ref{et03}}\label{es05}

\begin{eproposition}\label{ep501}
\textup{\cite[Proposition~9]{SokolovTumanova2020IVM}}
Let $\mathbb{P} = \langle A * B;\,H = K,\,\varphi \rangle$. Suppose also that $H$ is~normal in~$A$\textup{,} $K$ is~normal in~$B$\textup{,} and~the~group~$\operatorname{Aut}_{\mathbb{P}}(H)$ coincides with~one of~its subgroups~$\operatorname{Aut}_{A}(H)$ and~$\varphi\operatorname{Aut}_{B}(H)\varphi^{-1}$. If~$\mathcal{C}$ is~a~class of~groups closed under taking subgroups and~$\mathbb{P}$ is~residually a~$\mathcal{C}$\nobreakdash-group\textup{,} then the~following statements hold.

\enlargethispage{10pt}

\textup{1.\hspace{1ex}}If $K \ne B$\textup{,} then $H$ is~$\mathcal{C}$\nobreakdash-sep\-a\-ra\-ble in~$A$.

\textup{2.\hspace{1ex}}If $H \ne A$\textup{,} then $K$ is~$\mathcal{C}$\nobreakdash-sep\-a\-ra\-ble in~$B$.
\end{eproposition}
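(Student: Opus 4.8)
The two conclusions are interchanged by the symmetry $A\leftrightarrow B$, $H\leftrightarrow K$, $\varphi\leftrightarrow\varphi^{-1}$, under which the hypothesis is invariant: it swaps the two candidate subgroups $\operatorname{Aut}_A(H)$ and $\varphi\operatorname{Aut}_B(K)\varphi^{-1}$ of $\operatorname{Aut}H$ while fixing $\operatorname{Aut}_\mathbb{P}(H)$. Hence it is enough to prove Statement~1. I would run the argument in the case $\operatorname{Aut}_\mathbb{P}(H)=\operatorname{Aut}_A(H)$; the case $\operatorname{Aut}_\mathbb{P}(H)=\varphi\operatorname{Aut}_B(K)\varphi^{-1}$ is entirely analogous, with $B$ playing the role of $A$ in the auxiliary steps. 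Since $H\trianglelefteq A$ and $K\trianglelefteq B$, the subgroup $H$ is normal in $\mathbb{P}$; put $Z=\mathcal{Z}_\mathbb{P}(H)$. The conjugation homomorphism $\mathbb{P}\to\operatorname{Aut}H$ has kernel $Z$ and image $\operatorname{Aut}_\mathbb{P}(H)$, while its restriction to $A$ has image $\operatorname{Aut}_A(H)$; thus the hypothesis $\operatorname{Aut}_\mathbb{P}(H)=\operatorname{Aut}_A(H)$ says exactly that $\mathbb{P}=AZ$.

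The crux is the purely group-theoretic claim $\mathcal{Z}_A(Z)=H$; the inclusion $H\subseteq\mathcal{Z}_A(Z)$ is automatic because $Z$ centralizes $H$. This is where $K\ne B$ enters. Fix $b\in B\setminus K$ and an element $a_b\in A$ inducing on $H$ the same automorphism as $b$ (such $a_b$ exists since conjugation by $b$ induces on $H$ an automorphism lying in $\operatorname{Aut}_\mathbb{P}(H)=\operatorname{Aut}_A(H)$); then $z_b=ba_b^{-1}$ centralizes $H$ and, since $A\cap B=H$ in the amalgamated product, $z_b\notin A$. Using such elements $z_b$ — after possibly adjusting $a_b$ modulo $\mathcal{Z}_A(H)$ and allowing conjugates and products — one shows that no $a\in A\setminus H$ centralizes all of $Z$: if $a\notin H$, then a normal-form computation (the length function on reduced words of $\mathbb{P}$) shows that conjugating a suitable cyclically reduced element of $Z$ of positive length by $a$ yields a strictly longer word, so that element is not fixed by $a$. (If $H=A$ then $\mathbb{P}=B$ and Statement~1 is trivial, so one may assume $H\ne A$.) This reduced-form bookkeeping is the step I expect to be the main obstacle.

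Granting the claim, take $a\in A\setminus H$ and choose $z\in Z$ with $[a,z]\ne1$. Since $\mathbb{P}$ is residually a $\mathcal{C}$-group, there is a homomorphism of $\mathbb{P}$ onto a $\mathcal{C}$-group whose kernel $N$ does not contain $[a,z]$; because $\mathcal{C}$ is closed under taking subgroups, $N\in\mathcal{C}^*(\mathbb{P})$. If $a\in H(N\cap A)$, write $a=hn$ with $h\in H$ and $n\in N\cap A$; computing modulo $N$ and using that $z$ centralizes $h$ and that $N$ is normal in $\mathbb{P}$, one gets $[a,z]=[hn,z]\equiv[h,z]=1\pmod N$, contradicting $[a,z]\notin N$. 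Hence $a\notin H(N\cap A)$. By Proposition~\ref{ep302}, $N\cap A\in\mathcal{C}^*(A)$, so the natural homomorphism $A\to A/(N\cap A)$ is onto a $\mathcal{C}$-group and carries $a$ outside the image of $H$. Thus $H$ is $\mathcal{C}$-separable in $A$, which is Statement~1; Statement~2 follows by the symmetry noted at the start.
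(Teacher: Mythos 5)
The paper itself gives no proof of this proposition --- it is imported verbatim from \cite{SokolovTumanova2020IVM} --- so your argument can only be checked on its own merits. Your skeleton is the right one: the symmetry reduction to Statement~1 is valid, the elements $z_{b}=ba_{b}^{-1}$ are exactly the right tool, and the closing step (namely $[hn,z]\equiv[n,z]\pmod N$ because $z$ centralizes $H$ and $N$ is normal, whence $a\notin H(N\cap A)$ and $N\cap A\in\mathcal{C}^{*}(A)$ by Proposition~\ref{ep302}) is correct. Moreover, the step you flag as the main obstacle is in fact immediate in the case $\operatorname{Aut}_{\mathbb{P}}(H)=\operatorname{Aut}_{A}(H)$: for $a\in A\setminus H$ and $b\in B\setminus K$ the identity $az_{b}=z_{b}a$ is equivalent to $b^{-1}ab=a_{b}^{-1}aa_{b}$, whose right-hand side lies in $A$, while the left-hand side is a reduced word of length~$3$ and therefore, by the reduced form theorem, lies outside $A$. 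No adjustment of $a_{b}$, no conjugates and no products are needed, and one does not even need the full claim $\mathcal{Z}_{A}(Z)=H$.

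The genuine gap is the assertion that the case $\operatorname{Aut}_{\mathbb{P}}(H)=\varphi\operatorname{Aut}_{B}(K)\varphi^{-1}$ is ``entirely analogous.'' There the auxiliary elements are $w_{a}=ab_{a}^{-1}$ with $b_{a}\in B$ inducing $\widehat{a}|_{H}$, and $a$ commutes with $w_{a}$ if and only if $a$ commutes with $b_{a}$. When $b_{a}\notin K$ the same length argument applies, but nothing rules out $b_{a}\in K$; in that subcase $w_{a}$ lies in $\mathcal{Z}_{A}(H)\setminus H\subseteq A$, every element of $Z$ your recipe manufactures from $a$ has length at most~$1$, and the conjugation--length trick yields nothing. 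The subcase is still elementary but requires a different computation: set $c=ab_{a}^{-1}\in Z\cap(A\setminus H)$, take $b\in B\setminus K$ (this is where the hypothesis $K\ne B$ re-enters), and note that $w=b^{-1}cb\in Z$ because $Z=\mathcal{Z}_{\mathbb{P}}(H)$ is normal in $\mathbb{P}$; then $[c,w]=c^{-1}b^{-1}c^{-1}bcb^{-1}cb$ is a reduced word of length~$8$, hence non-trivial, and $[a,w]=b_{a}^{-1}[c,w]b_{a}\ne1$ because $w$ centralizes $b_{a}\in H$. With this subcase supplied (and its mirror image for Statement~2, obtained by your symmetry), the proof closes.
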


\begin{eproposition}\label{ep502}
\textup{\cite[Theorem~1]{KuvaevSokolov2017IVM}}
Let $\mathbb{E} = \langle B,t;\,t^{-1}Ht = K,\,\varphi \rangle$. Suppose also that $\mathcal{C}$ is~a~class of~groups\textup{,} the~symbols~$\overline{H}$ and~$\overline{K}$ denote the~subgroups $\bigcap_{N \in \mathcal{C}^{*}(\mathbb{E})}H(N \cap B)$ 
and~$\bigcap_{N \in \mathcal{C}^{*}(\mathbb{E})}K(N \cap B)$\textup{,} respectively\textup{,} and~at~least one of~the~following statements holds\textup{:}
\begin{list}{}{\topsep=0pt\itemsep=0pt\labelsep=1ex\labelwidth=3ex\leftmargin=\parindent\addtolength{\leftmargin}{\labelsep}\addtolength{\leftmargin}{\labelwidth}}
\item[\textup{(\makebox[1.25ex]{$a$})}]the~subgroups~$H$ and~$K$ coincide and~satisfy a~non-trivi\-al identity\textup{;}
\item[\textup{(\makebox[1.25ex]{$b$})}]the~subgroups~$H$ and~$K$ are~properly contained in~a~subgroup~$D$ of~$B$ satisfying a~non-trivi\-al identity.
\end{list}
If $\mathbb{E}$ is~residually a~$\mathcal{C}$\nobreakdash-group\textup{,} then $\overline{H} = H$ and~$\overline{K} = K$.
\end{eproposition}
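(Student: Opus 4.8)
The plan is to prove the contrapositive, separability form of the statement. Since $H \leqslant H(N \cap B)$ for every $N$, the inclusion $H \leqslant \overline{H}$ is automatic, so it suffices to show $\overline{H} \leqslant H$. Rewriting the definition, for $b \in B$ one has $b \in \overline{H}$ if and only if $b \in HN$ in $\mathbb{E}$ for every $N \in \mathcal{C}^{*}(\mathbb{E})$ (because $H(N\cap B) = HN \cap B$); equivalently, the image of $b$ under every homomorphism of $\mathbb{E}$ onto a $\mathcal{C}$-group lies in the image of $H$. I would therefore assume, seeking a contradiction, that there is an element $b \in B \setminus H$ with this property and produce a single non-trivial element $W \in \mathbb{E}$ lying in $\bigcap_{N \in \mathcal{C}^{*}(\mathbb{E})} N$. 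Since $\mathbb{E}$ is residually a $\mathcal{C}$-group, this intersection is trivial, and the contradiction gives $\overline{H} = H$; the equality $\overline{K} = K$ then follows by the symmetric argument applied to the presentation $\mathbb{E} = \langle B, s;\, s^{-1} K s = H,\, \varphi^{-1}\rangle$ with $s = t^{-1}$, which interchanges the roles of $H$ and $K$.

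The key is a congruence uniform in $N$. Fix $N \in \mathcal{C}^{*}(\mathbb{E})$; by assumption $b \equiv h \pmod{N}$ for some $h \in H$ depending on $N$, and hence $t^{-1} b t \equiv t^{-1} h t = h\varphi \in K \pmod{N}$. Thus both $b$ and $t^{-1}bt$ map into the image of $D$ in $\mathbb{E}/N$ under case~(b); under case~(a), where $H = K$ and $\varphi \in \operatorname{Aut}H$, an easy induction shows that \emph{every} conjugate $t^{-j} b t^{j}$ maps into the image of $H = D$. Now let $w(x_1, \ldots, x_m) = 1$ be the non-trivial identity satisfied by $D$ (respectively $H$), and set $W = w(g_1, \ldots, g_m)$, where each $g_i$ is drawn from the fixed list of elements congruent modulo every $N$ to an element of $D$. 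Reducing modulo any $N$ replaces each $g_i$ by an element of the image of $D$, which is a homomorphic image of $D$ and hence still satisfies $w$; therefore $W \equiv 1 \pmod{N}$. As $W$ is one fixed element independent of $N$, this places $W$ in every $N \in \mathcal{C}^{*}(\mathbb{E})$. I should stress that no closure property of $\mathcal{C}$ is used here, matching the minimal hypotheses of the statement.

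It remains to choose the $g_i$ so that $W \neq 1$, and this is the step I expect to be the main obstacle. A useful first remark is that the admissible choices are abundant: the set of elements congruent to a $D$-element modulo every $N$ is closed under multiplication, so each $g_i$ may be taken to be an arbitrary word in $b$, in $t^{-1}bt$ (and in the further $t$-conjugates in case~(a)), and in constants from $D$. Writing $W$ as a word in $t$ and $B$, the $t$-syllables are separated by $B$-syllables, and by Britton's lemma $W$ is non-trivial once this word is reduced, that is, once no $B$-syllable flanked by opposite-signed $t$'s lies in $H$ (at a $t^{-1}(\cdot)t$ place) or in $K$ (at a $t(\cdot)t^{-1}$ place). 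The hypotheses are exactly what rule out such pinches: in case~(a) the infinitely many conjugates $t^{-j}bt^{j}$ allow widely separated exponents and, if necessary, replacement of $b$ by a suitable power so that the surviving syllables avoid $H$, using that $\langle b\rangle \cap H$ has controlled form; in case~(b) the proper inclusions $H, K \subsetneq D$ furnish elements of $D\setminus H$ and $D\setminus K$ that can be inserted as blockers. The delicate point is that the exponents occurring in $W$ are inherited from the arbitrary identity $w$, so some power $b^{e}$ might accidentally fall into $H$; resolving this uniformly — by distributing the occurrences of $b$ among several conjugates and adjusting by constants from $D$ — is the technical heart of the proof. Once a reduced $W$ is secured, Britton's lemma yields $W \neq 1$, contradicting $W \in \bigcap_{N} N = 1$ and completing the argument.
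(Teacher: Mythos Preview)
The paper does not prove this proposition at all: it is quoted verbatim from \cite[Theorem~1]{KuvaevSokolov2017IVM} and used as a black box, so there is no in-paper argument to compare your attempt against. Your overall strategy---manufacture one fixed element $W\in\mathbb{E}$ that is congruent to $1$ modulo every $N\in\mathcal{C}^{*}(\mathbb{E})$ because its inputs reduce to elements of the image of $D$, yet is nontrivial by Britton's lemma---is the standard one and is exactly how such results are proved (and how the present paper argues in the closely related proof of Theorem~\ref{et03}).

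The genuine gap is precisely where you flag it, and your suggested patch (``distribute the occurrences of $b$ among several conjugates and adjust by constants from $D$'') is not enough on its own: with an arbitrary identity $w$ the exponents on the variables are uncontrolled, and a syllable such as $b^{e}$ or $\omega_{i}(x_{1},x_{2})$ may still collapse into $H$ or $K$, destroying reducedness. The standard remedy, invoked in this very paper in the proof of Theorem~\ref{et03} as Lemma~2 of~\cite{Shirvani1985AM}, is to first replace $w$ by an identity in the normalized shape
\[
\omega(y,x_{1},x_{2})=\omega_{0}(x_{1},x_{2})\,y^{\varepsilon_{1}}\,\omega_{1}(x_{1},x_{2})\cdots y^{\varepsilon_{n}}\,\omega_{n}(x_{1},x_{2}),
\qquad n\geqslant 1,\ \varepsilon_{i}=\pm 1,\ \omega_{i}\in\{x_{1}^{\pm 1},x_{2}^{\pm 1},(x_{1}x_{2}^{-1})^{\pm 1}\}.
\]
With all exponents $\pm 1$ your ``$b^{e}\in H$'' worry disappears, and one can then make concrete substitutions---in case~(b), say $y=t^{-1}bt$ and $x_{1},x_{2}$ fixed elements of $D$ chosen (using the \emph{proper} inclusions $H,K\subsetneq D$) so that $x_{1},x_{2},x_{1}x_{2}^{-1}\notin K$; in case~(a), placing $b$ and its $t$-conjugates at distinct $t$-levels---to obtain a word that is Britton-reduced of positive $t$-length. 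Even with the Shirvani normal form a small case analysis (e.g.\ when the relevant index is $2$) is needed, as you can see mirrored in the four cases of the proof of Theorem~\ref{et03}; but once you insert that lemma your outline becomes a complete proof along the lines of the cited source.
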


\begin{proof}[\textup{\textbf{Proof of~Theorem~\ref{et03}}}]
Suppose that the~subgroups~$\overline{H}$ and~$\overline{K}$ are~defined as~in~Proposition~\ref{ep502}. Since $(H \cap K)\varphi = H \cap K$, the~relations $H \leqslant K$, $H = K$, and~$H \geqslant K$ are~equivalent. Therefore, only two cases are~possible: $H = K$ and~$H \ne HK \ne K$. If~$H = B = K$, then the~residual $\mathcal{C}$\nobreakdash-ness of~$B/H$ and~$B/K$ is~obvious. Hence, we~can further assume that $H \ne B \ne K$.

Due to~Proposition~\ref{ep301}, to~end the~proof it suffices to~show that $H$ and~$K$ are~$\mathcal{C}$\nobreakdash-sep\-a\-ra\-ble in~$B$. If~$\mathfrak{U} = \mathfrak{H}$ or~$\mathfrak{U} = \mathfrak{K}$, then the~group $\mathbb{B} = \langle B * B;\,H = K,\,\varphi \rangle$ satisfies all conditions of~Proposition~\ref{ep501}, which ensures the~required separability.

Suppose that $H$ and~$K$ satisfy a~non-trivi\-al identity. Then the~group~$HK$ has the~same property since it is~an~extension of~$K$ by~a~group isomorphic to~$HK/K$ and~$HK/K \cong H/H \cap K$. Thus, the~conditions of~Proposition~\ref{ep502} hold, and~we get the~equalities $\overline{H} = H$ and~$\overline{K} = K$. It~remains to~note that, if~$N \in \mathcal{C}^{*}(\mathbb{E})$, then $N \cap B \in \mathcal{C}^{*}(B)$ by~Proposition~\ref{ep302}, and~therefore these equalities imply the~$\mathcal{C}$\nobreakdash-sep\-a\-ra\-bil\-ity of~$H$ and~$K$ in~$B$.

Now suppose that $\mathfrak{H}$ satisfies a~non-trivi\-al identity. Then, by~Lemma~2 from~\cite{Shirvani1985AM}, it~satisfies a~non-trivi\-al identity of~the~form
\[
\omega(y,x_{1},x_{2}) = \omega_{0}(x_{1},x_{2})y^{\varepsilon_{1}}\omega_{1}(x_{1},x_{2}) \ldots y^{\varepsilon_{n}}\omega_{n}(x_{1},x_{2}),
\]
where $n \geqslant 1$, $\varepsilon_{1}^{\vphantom{1}},\,\ldots,\,\varepsilon_{n}^{\vphantom{1}} = \pm 1$, and~$\omega_{0}^{\vphantom{1}}(x_{1}^{\vphantom{1}},x_{2}^{\vphantom{1}}),\,\ldots,\,\omega_{n}^{\vphantom{1}}(x_{1}^{\vphantom{1}},x_{2}^{\vphantom{1}}) \in \{x_{1}^{\pm 1},\,x_{2}^{\pm 1},\,(x_{1}^{\vphantom{1}}x_{2}^{-1})^{\pm 1}_{\vphantom{1}}\}$. Let~us assume that there exist elements $u_{1},\,u_{2},\,v \in \mathbb{E}$ with~the~following properties:
\begin{list}{}{\topsep=0pt\itemsep=0pt\labelsep=1ex\labelwidth=3ex\leftmargin=\parindent\addtolength{\leftmargin}{\labelsep}\addtolength{\leftmargin}{\labelwidth}}
\item[$(\mathfrak{i})$\phantom{$\mathfrak{i}$}]the commutator $[\omega(v,u_{1},u_{2}),\,v]$ has a~reduced form of~non-zero length;
\item[$(\mathfrak{ii})$]for each subgroup $N \in \mathcal{C}^{*}(\mathbb{E})$, the~inclusions $u_{1},\,u_{2} \in BN$ and~$v \in HN$ hold.
\end{list}
Then $\mathbb{E}$ is~not residually a~$\mathcal{C}$\nobreakdash-group, contrary to~the~condition of~the~theorem.

Indeed, since the~element $g = [\omega(v,u_{1},u_{2}),\,v]$ has a~reduced form of~non-zero length, it~is~not equal to~$1$. At~the~same time, if~$N \in \mathcal{C}^{*}(\mathbb{E})$, then $g \equiv [\omega(h,b_{1},b_{2}),\,h] \pmod N$ for~some $b_{1},\,b_{2} \in B$ and~$h \in H$. The~restriction to~$H$ of~the~conjugation by~$\omega(h,b_{1},b_{2})$ coincides with~the~element $\omega(\widehat{h},\,\widehat{b_{1}}|_{H},\,\widehat{b_{2}}|_{H})$ of~$\mathfrak{H}$, which is~the~identity mapping because $\mathfrak{H}$ satisfies~$\omega$. Therefore, $[\omega(h,b_{1},b_{2}),\,h] = 1$, $g \equiv 1 \pmod N$, and~$\mathbb{E}$ is~not residually a~$\mathcal{C}$\nobreakdash-group since $N$ is~chosen arbitrarily.

As~above, to~prove the~$\mathcal{C}$\nobreakdash-sep\-a\-ra\-bil\-ity of~$H$ and~$K$ in~$B$, it~suffices to~show that $\overline{H} = H$ and~$\overline{K} = K$. Arguing by~contradiction, we~consider four cases and,~in~each of~them, find elements $u_{1},\,u_{2},\,v \in \mathbb{E}$ satisfying~$(\mathfrak{i})$ and~$(\mathfrak{ii})$.

\smallskip

\textit{Case~1.} $\overline{H} \ne H$ and~$[B:H] \geqslant 3$.

\smallskip

Let $b_{1} \in \overline{H} \setminus H$. Since $[B:H] \geqslant 3$ and~$K \ne B$, there exist elements $b_{2},\,c \in B$ such that $b_{2}^{\vphantom{1}},\,b_{1}^{\vphantom{1}}b_{2}^{-1} \notin H$ and~$c \notin K$. Let~us put $u_{1} = b_{1}$, $u_{2} = b_{2}$, and~$v = tc^{-1}t^{-1}b_{1}tct^{-1}$.

Since $b_{1}^{\pm 1},\,b_{2}^{\pm 1},\,(b_{1}^{\vphantom{1}}b_{2}^{-1})^{\pm 1}_{\vphantom{1}} \notin H$ and~$c \notin K$, the~element $[\omega(v,u_{1},u_{2}),\,v]$ has a~reduced form of~length~$8(n+1)$. At~the~same time, if~$N \in \mathcal{C}^{*}(\mathbb{E})$, then it follows from~the~inclusion $b_{1} \in \overline{H}$ that $b_{1} \in HN$, $t^{-1}b_{1}t \in KN$, $c^{-1}t^{-1}b_{1}tc \in KN$ (because $K$ is~normal in~$B$), and~$tc^{-1}t^{-1}b_{1}tct^{-1} \in HN$. Thus, the~elements~$u_{1}$,~$u_{2}$, and~$v$ satisfy~$(\mathfrak{i})$ and~$(\mathfrak{ii})$.

\smallskip

\textit{Case~2.} $\overline{H} \ne H$ and~$[B:H] = 2$.

\smallskip

Let~us fix some elements $b \in \overline{H} \setminus H$ and~$c \in B \setminus K$, and~put $u_{1} = t^{-1}bt$, $u_{2} = t^{-2}bt^{2}$, and~$v = c$. Then $u_{1}^{\vphantom{1}}u_{2}^{-1} = t^{-1}_{\vphantom{1}}bt^{-1}_{\vphantom{1}}b^{-1}_{\vphantom{1}}t^{2}_{\vphantom{1}}$ and~the~element $[\omega(v,u_{1},u_{2}),\,v]$ has a~reduced form of~length not~less than~$4(n+1)$. The~relations $[B:H] = 2$ and~$\overline{H} \ne H$ mean that $B = \overline{H}$. Therefore, $b \in HN$ and~$BN = HN$ for~any $N \in \mathcal{C}^{*}(\mathbb{E})$. It~follows that
\[
u_{1} = t^{-1}bt \in KN \leqslant BN = HN, \quad 
u_{2} = t^{-2}bt^{2} = t^{-1}u_{1}t \in KN,
\]
and~$v = c \in BN = HN$, as~required.

\smallskip

\textit{Case~3.} $\overline{K} \ne K$ and~$[B:H] \geqslant 3$.

\smallskip

Suppose that $c \in \overline{K} \setminus K$ and~elements $b_{1}^{\vphantom{1}},\,b_{2}^{\vphantom{1}} \in B \setminus H$ are~such that $b_{1}^{\vphantom{1}}b_{2}^{-1} \notin H$. Let~us put $u_{1} = b_{1}$, $u_{2} = b_{2}$, and~$v = tct^{-1}$. Then the~element $[\omega(v,u_{1},u_{2}),\,v]$ has a~reduced form of~length~$4(n+1)$ and,~for~each subgroup $N \in \mathcal{C}^{*}(\mathbb{E})$, the~inclusions $b_{1},\,b_{2} \in BN$, $c \in KN$, and~$tct^{-1} \in HN$ hold.

\smallskip

\textit{Case~4.} $\overline{K} \ne K$ and~$[B:H] = 2$.

\smallskip

It~follows from~the~relations $K/L = K/H \cap K \cong KH/H \leqslant B/H$ that $[K:L] \leqslant 2$. If~$K = L$, then $K \leqslant H$ and,~hence, $K = H$, as~noted at~the~beginning of~the~proof. The~last equality means that $\overline{H} = \overline{K}$ and~$\overline{H} \ne H$, which is~impossible due to~Cases~1 and~2 considered above. Thus, $[K:L] = 2$. Let~us put $\overline{L} = \bigcap_{N \in \mathcal{C}^{*}(\mathbb{E})}L(N \cap B)$ and~show that $\overline{L} \ne L$.

Suppose, on~the~contrary, that $\overline{L} = L$, and~fix some elements $c \in \overline{K} \setminus K$ and~$k \in K \setminus L$. Since $[K:L] = 2$ and~$c \notin K$, the~relations $K = L \cup kL$ and~$c,\,k^{-1}c \notin L = \overline{L}$ hold. Hence, $c \notin L(N_{1} \cap B)$ and~$k^{-1}c \notin L(N_{2} \cap B)$ for~suitable subgroups $N_{1},\,N_{2} \in \mathcal{C}^{*}(\mathbb{E})$. Let~$N = N_{1} \cap N_{2}$. Then $N \in \mathcal{C}^{*}(\mathbb{E})$ by~Proposition~\ref{ep303} and~$c,\,k^{-1}c \notin L(N \cap B)$. It~follows that
\[
c \notin L(N \cap B) \cup kL(N \cap B) = K(N \cap B)
\]
and,~therefore, $c \notin \overline{K}$ despite the~choice of~$c$.

So, $\overline{L} \ne L$. Let~us fix some elements $b \in \overline{L} \setminus L$ and~$d \in B \setminus H$, and~put $u_{1} = tbt^{-1}$, $u_{2} = t^{2}bt^{-2}$, and~$v = dtbt^{-1}d^{-1}$. Then $u_{1}^{\vphantom{1}}u_{2}^{-1} = tbtb^{-1}_{\vphantom{1}}t^{-2}_{\vphantom{1}}$. Since $L \leqslant H$, the~relations $\overline{L} \leqslant \overline{H} = H$ and~$b \in H$ hold. Therefore, $b \notin K$ and~the~element $[\omega(v,u_{1},u_{2}),\,v]$ has a~reduced form of~length not~less than~$8(n+1)$. At~the~same time, since $b \in \overline{L}$ and~$L$ is~normal in~$\mathbb{E}$, the~inclusions $b \in LN$, $tbt^{-1} \in LN$, $t^{2}bt^{-2} \in LN$, and~$dtbt^{-1}d^{-1} \in LN$ hold for~any $N \in \mathcal{C}^{*}(\mathbb{E})$.

\smallskip

Thus, $\overline{H} = H$ and~$\overline{K} = K$. When $\mathfrak{K}$ satisfies a~non-trivi\-al identity, the~proof is~similar.
\end{proof}

\section{Proof of~Theorems~\ref{et04}--\ref{et06} and~Corollary~\ref{ec02}}\label{es06}

If $\mathbb{E} = \langle B,t;\,t^{-1}Ht = K,\,\varphi \rangle$, $Q$~is~a~normal $(H,K,\varphi)$-com\-pat\-i\-ble subgroup of~$B$, and~$\mathcal{C}$ is~a~class of~groups, then~we say that $Q$~is

a)\hspace{1ex}\emph{$\mathcal{C}$\nobreakdash-ad\-mis\-si\-ble} if there exists a~homomorphism of~the~group
\[
\mathbb{E}_{Q} = \langle B/Q,t;\,t^{-1}(HQ/Q)t = KQ/Q,\,\varphi_{Q} \rangle
\]
onto a~group from~$\mathcal{C}$ acting injectively on~$B/Q$;

b)\hspace{1ex}\emph{pre-$\mathcal{C}$-ad\-mis\-si\-ble} if $B/Q \in \mathcal{C}$ and~$HQ/Q \cap KQ/Q = LQ/Q$.

The~next proposition follows from~Theorems~1 and~3 of~\cite{Sokolov2021SMJ1}.

\begin{eproposition}\label{ep601}
Let $\mathbb{E} = \langle B,t;\,t^{-1}Ht = K,\,\varphi \rangle$\textup{,} and~let $\mathcal{C}$ be~a~root class of~groups. Suppose also that $H$ and~$K$ are~$\mathcal{C}$\nobreakdash-sep\-a\-ra\-ble in~$B$ and~each subgroup of~$\mathcal{C}^{*}(B)$ contains a~$\mathcal{C}$\nobreakdash-ad\-mis\-si\-ble subgroup. Then $\mathbb{E}$ is~residually a~$\mathcal{C}$\nobreakdash-group if and~only if $B$ has the~same property.
\end{eproposition}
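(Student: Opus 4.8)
The plan is to split the equivalence into its two implications and reduce the substantial one to Theorems~1 and~3 of~\cite{Sokolov2021SMJ1}. The implication ``$\mathbb{E}$ residually $\mathcal{C}$ $\Longrightarrow$ $B$ residually $\mathcal{C}$'' is immediate and uses none of the standing hypotheses: the base group $B$ is a subgroup of $\mathbb{E}$, and since $\mathcal{C}$, being a root class, is closed under taking subgroups, Proposition~\ref{ep302} transfers residual $\mathcal{C}$-ness from $\mathbb{E}$ to $B$. All the content is in the reverse implication, which is where the hypotheses on $H$, $K$, and $\mathcal{C}^{*}(B)$ come into play.

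For that implication I would invoke the filtration machinery of~\cite{Sokolov2021SMJ1}: Theorem~1 there characterizes the residual $\mathcal{C}$-ness of the fundamental group of a graph of groups through the existence of a suitable $\mathcal{C}$-filtration, while Theorem~3 gives a sufficient condition for such a filtration in terms of $\mathcal{C}$-separability of the edge groups in the vertex groups together with an admissibility condition on the associated quotient constructions. The work then reduces to a dictionary check: specializing that graph to the one underlying $\mathbb{E}$ — one vertex carrying $B$ and one loop carrying $H\cong K$ — one must verify that the hypotheses of those theorems become exactly (i)~$B$ is residually a $\mathcal{C}$-group, (ii)~$H$ and $K$ are $\mathcal{C}$-separable in $B$, and (iii)~every subgroup of $\mathcal{C}^{*}(B)$ contains a $\mathcal{C}$-admissible subgroup in the sense fixed before the statement. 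Conditions (i) and (ii) match verbatim, and (iii) is precisely the specialization to the loop of the admissibility hypothesis of Theorem~3, once one notes that ``$\mathcal{C}$-admissible'' was defined to mean exactly ``the quotient HNN-extension $\mathbb{E}_{Q}$ admits a homomorphism onto a $\mathcal{C}$-group injective on the image of the base group''.

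If instead one wanted a self-contained argument, one would unwind this as follows. For $g\in B\setminus\{1\}$, choose $N\in\mathcal{C}^{*}(B)$ with $g\notin N$ (possible by (i)), pick a $\mathcal{C}$-admissible $Q\leqslant N$ by (iii), and observe that the homomorphism of $\mathbb{E}_{Q}$ onto a $\mathcal{C}$-group injective on $B/Q$, precomposed with $\rho_{Q}$, separates $g$ from~$1$. For $g$ with a reduced form $x_{0}t^{\varepsilon_{1}}x_{1}\cdots t^{\varepsilon_{n}}x_{n}$ of length $n\geqslant 1$, use the $\mathcal{C}$-separability of $H$ and $K$ to pick, for each index $i$ at which a Britton cancellation could appear in a quotient, a subgroup $N_{i}\in\mathcal{C}^{*}(B)$ with $x_{i}\notin HN_{i}$ or $x_{i}\notin KN_{i}$ as appropriate; the intersection $P$ of these finitely many $N_{i}$ again lies in $\mathcal{C}^{*}(B)$ by Proposition~\ref{ep303}, and by (iii) it contains a $\mathcal{C}$-admissible subgroup $Q$, necessarily normal and $(H,K,\varphi)$-compatible in $B$, so $\mathbb{E}_{Q}$ is defined. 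The inclusion $Q\leqslant P$ forces $g\rho_{Q}$ to retain a reduced form of length $n$, hence $g\rho_{Q}\neq 1$ by Britton's lemma; and since $B/Q$ embeds via the $\mathcal{C}$-admissible homomorphism into a $\mathcal{C}$-group, we get $B/Q\in\mathcal{C}$ and that homomorphism is injective on $HQ/Q$ and $KQ/Q$, so Proposition~\ref{ep310} shows $\mathbb{E}_{Q}$ is residually a $\mathcal{C}$-group and therefore separates $g\rho_{Q}$ from~$1$. The main obstacle, in either route, is the same — producing one normal $(H,K,\varphi)$-compatible subgroup $Q$ of $B$ that simultaneously defeats every pinch point of a given reduced form while remaining $\mathcal{C}$-admissible; this is exactly what the cofinality hypothesis (iii) is designed to supply, and it is why the conclusion cannot be drawn from the $\mathcal{C}$-separability of $H$ and $K$ alone. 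In the citation route this obstacle is absorbed into the proof of Theorem~3 of~\cite{Sokolov2021SMJ1}, so the remaining difficulty is only to confirm that the general admissibility hypothesis there is faithfully captured by condition (iii) in the HNN setting.
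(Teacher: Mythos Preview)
Your proposal is correct and matches the paper's own treatment: the paper states, without further argument, that Proposition~\ref{ep601} follows from Theorems~1 and~3 of~\cite{Sokolov2021SMJ1}, which is exactly your primary route. Your additional self-contained argument via Britton's lemma and Proposition~\ref{ep310} is a valid unwinding of that citation and goes beyond what the paper records.
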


\begin{eproposition}\label{ep602}
Suppose that the~group $\mathbb{E} = \langle B,t;\,t^{-1}Ht = K,\,\varphi \rangle$ satisfies~$(*)$ and~$\mathcal{C}$~is a~root class of~groups closed under taking quotient groups. Suppose also that $\mathfrak{F} \in \mathcal{C}$ and the~following conditions hold\textup{:}
\begin{list}{}{\topsep=0pt\itemsep=0pt\labelsep=1ex\labelwidth=2.85ex\leftmargin=\parindent\addtolength{\leftmargin}{\labelsep}\addtolength{\leftmargin}{\labelwidth}}
\item[\textup{(\makebox[1.1ex]{\dag})}]at~least one of~the~subgroups~$\mathfrak{H}$ and~$\mathfrak{K}$ is~normal in~$\mathfrak{U}$ or~$\mathfrak{U} \in \mathcal{C}$\textup{;}
\item[\textup{(\makebox[1.1ex]{\ddag})}]at~least one of~the~subgroups~$\mathfrak{L}$ and~$\mathfrak{F}$ is~normal in~$\mathfrak{V}$ or~$\mathfrak{V} \in \mathcal{C}$.
\end{list}
If $B/H$ and~$B/K$ are~residually $\mathcal{C}$\nobreakdash-groups and~each subgroup of~$\mathcal{C}^{*}(B)$ contains a~pre-$\mathcal{C}$-admis\-si\-ble subgroup\textup{,} then $\mathbb{E}$ and~$B$ are~residually $\mathcal{C}$\nobreakdash-groups simultaneously.
\end{eproposition}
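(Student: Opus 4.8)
The plan is to derive the proposition from Proposition~\ref{ep601}. Since the base group~$B$ embeds into~$\mathbb{E}$ and $\mathcal{C}$ is closed under taking subgroups, Proposition~\ref{ep302} already gives one implication (if $\mathbb{E}$ is residually a~$\mathcal{C}$\nobreakdash-group, so is~$B$); the point is to obtain the converse, and Proposition~\ref{ep601} will yield both at once once its two hypotheses are checked: that $H$ and~$K$ are $\mathcal{C}$\nobreakdash-separable in~$B$, and that every subgroup of~$\mathcal{C}^{*}(B)$ contains a $\mathcal{C}$\nobreakdash-admissible subgroup. The first is immediate from Proposition~\ref{ep301}, since $H$ and~$K$ are normal in~$B$ and $B/H$, $B/K$ are residually $\mathcal{C}$\nobreakdash-groups by assumption.

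For the second hypothesis, fix $N \in \mathcal{C}^{*}(B)$ and choose, by the hypothesis of the proposition, a pre\nobreakdash-$\mathcal{C}$\nobreakdash-admissible subgroup $Q \leqslant N$; I claim $Q$ itself is $\mathcal{C}$\nobreakdash-admissible. Being pre\nobreakdash-$\mathcal{C}$\nobreakdash-admissible, $Q$ is a normal $(H,K,\varphi)$-com\-pat\-i\-ble subgroup of~$B$, so the HNN\nobreakdash-extension~$\mathbb{E}_{Q}$ is defined and, by Proposition~\ref{ep404}, so are $\mathfrak{H}_{Q}$, $\mathfrak{K}_{Q}$, $\mathfrak{L}_{Q}$, $\mathfrak{U}_{Q}$, $\mathfrak{F}_{Q}$, $\mathfrak{V}_{Q}$; moreover $LQ/Q$ is $\varphi_{Q}$\nobreakdash-in\-var\-i\-ant, and pre\nobreakdash-$\mathcal{C}$\nobreakdash-admissibility gives $HQ/Q \cap KQ/Q = LQ/Q$, so $\mathbb{E}_{Q}$ satisfies~$(*)$ and the subgroups it attaches as its ``$\mathfrak{U}$'' and ``$\mathfrak{V}$'' are exactly $\mathfrak{U}_{Q}$ and $\mathfrak{V}_{Q}$. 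Since also $B/Q \in \mathcal{C}$, Theorem~\ref{et01} reduces the claim to showing $\mathfrak{U}_{Q} \in \mathcal{C}$ and $\mathfrak{V}_{Q} \in \mathcal{C}$.

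The key observation is that, because $B/Q \in \mathcal{C}$, the identity map of~$B/Q$ is a homomorphism onto a $\mathcal{C}$\nobreakdash-group acting injectively on each of $HQ/Q$, $KQ/Q$, $LQ/Q$; hence Proposition~\ref{ep304} gives $\mathfrak{H}_{Q} = \operatorname{Aut}_{B/Q}(HQ/Q) \in \mathcal{C}$, and likewise $\mathfrak{K}_{Q} \cong \operatorname{Aut}_{B/Q}(KQ/Q) \in \mathcal{C}$ and $\mathfrak{L}_{Q} \in \mathcal{C}$; also $\mathfrak{F}_{Q}$ is a quotient of $\mathfrak{F} \in \mathcal{C}$ by Proposition~\ref{ep404}, so $\mathfrak{F}_{Q} \in \mathcal{C}$. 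Now invoke hypothesis~(\dag). If $\mathfrak{U} \in \mathcal{C}$, then $\mathfrak{U}_{Q}$, being a quotient of~$\mathfrak{U}$ (Proposition~\ref{ep404}), lies in~$\mathcal{C}$. If instead $\mathfrak{H}$ is normal in~$\mathfrak{U}$, then the surjection $\mathfrak{U} \to \mathfrak{U}_{Q}$ of Proposition~\ref{ep404} carries~$\mathfrak{H}$ onto~$\mathfrak{H}_{Q}$, so $\mathfrak{H}_{Q}$ is normal in $\mathfrak{U}_{Q} = \mathfrak{H}_{Q}\mathfrak{K}_{Q}$ and $\mathfrak{U}_{Q}/\mathfrak{H}_{Q}$ is a quotient of $\mathfrak{K}_{Q} \in \mathcal{C}$; since a root class is closed under taking extensions, $\mathfrak{U}_{Q} \in \mathcal{C}$. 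The case ``$\mathfrak{K}$ normal in~$\mathfrak{U}$'' is symmetric. The same three-case argument applied to hypothesis~(\ddag), with $\mathfrak{L}_{Q}$ and~$\mathfrak{F}_{Q}$ in place of $\mathfrak{H}_{Q}$ and~$\mathfrak{K}_{Q}$, gives $\mathfrak{V}_{Q} \in \mathcal{C}$. Thus Theorem~\ref{et01} supplies a homomorphism of~$\mathbb{E}_{Q}$ onto a $\mathcal{C}$\nobreakdash-group acting injectively on~$B/Q$, i.e.\ $Q$ is $\mathcal{C}$\nobreakdash-admissible, and Proposition~\ref{ep601} finishes the proof.

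I do not expect a genuinely hard step: the argument is a reduction to Proposition~\ref{ep601} through Theorem~\ref{et01} and Proposition~\ref{ep404}. The substantive part, and the place where hypotheses~(\dag), (\ddag) and the inclusion $\mathfrak{F} \in \mathcal{C}$ do their work, is the passage from $\mathfrak{H}_{Q},\mathfrak{K}_{Q},\mathfrak{L}_{Q},\mathfrak{F}_{Q} \in \mathcal{C}$ to $\mathfrak{U}_{Q},\mathfrak{V}_{Q} \in \mathcal{C}$; everything else is bookkeeping, the one thing needing care being the verification that $\mathbb{E}_{Q}$ really satisfies~$(*)$ and that its $\mathfrak{U}$, $\mathfrak{V}$ coincide with the $\mathfrak{U}_{Q}$, $\mathfrak{V}_{Q}$ of Proposition~\ref{ep404} (and, along the way, that $\mathfrak{H}_{Q}\mathfrak{K}_{Q}$ and $\mathfrak{L}_{Q}\mathfrak{F}_{Q}$ are subgroups, which holds because the first factor is normal in each case).
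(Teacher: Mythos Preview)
Your proof is correct and follows the same approach as the paper's: reduce to Proposition~\ref{ep601} via Proposition~\ref{ep301}, then show every pre-$\mathcal{C}$-admissible $Q$ is $\mathcal{C}$-admissible by using Proposition~\ref{ep404}, Proposition~\ref{ep304}, the extension argument for~(\dag) and~(\ddag), and Theorem~\ref{et01}. You are slightly more explicit than the paper in checking that $\mathbb{E}_{Q}$ satisfies~$(*)$ and that its attached $\mathfrak{U}$, $\mathfrak{V}$ agree with $\mathfrak{U}_{Q}$, $\mathfrak{V}_{Q}$, but otherwise the arguments coincide.
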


\begin{proof}
In~view of~Propositions~\ref{ep301} and~\ref{ep601}, it~suffices to~show that every pre-$\mathcal{C}$-ad\-mis\-si\-ble subgroup~$Q$ is~$\mathcal{C}$\nobreakdash-ad\-mis\-si\-ble.

Indeed, let the~groups~$\mathbb{E}_{Q}$, $\mathfrak{H}_{Q}$, $\mathfrak{K}_{Q}$, $\mathfrak{L}_{Q}$, $\mathfrak{F}_{Q}$, $\mathfrak{U}_{Q}$, and~$\mathfrak{V}_{Q}$ be~defined as~in~Proposition~\ref{ep404}. By~the~latter, there exists a~homomorphism of~$\mathfrak{U}$ onto~the~group~$\mathfrak{U}_{Q}$ mapping~$\mathfrak{H}$ and~$\mathfrak{K}$ onto~$\mathfrak{H}_{Q}$ and~$\mathfrak{K}_{Q}$, respectively. Therefore, if~$\mathfrak{H}$ is~normal in~$\mathfrak{U}$, then $\mathfrak{H}_{Q}$ is~normal in~$\mathfrak{U}_{Q}$ and,~hence, the~latter is~an~extension of~$\mathfrak{H}_{Q}$ by~the~group $\mathfrak{U}_{Q}/\mathfrak{H}_{Q} = \mathfrak{K}_{Q}\mathfrak{H}_{Q}/\mathfrak{H}_{Q} \cong \mathfrak{K}_{Q}/\mathfrak{H}_{Q} \cap \mathfrak{K}_{Q}$. Since $B/Q \in \mathcal{C}$, it~follows from~Proposition~\ref{ep304} that $\operatorname{Aut}_{B/Q}(HQ/Q) \in \mathcal{C}$ and~$\operatorname{Aut}_{B/Q}(KQ/Q) \in \mathcal{C}$. But~$\operatorname{Aut}_{B/Q}(KQ/Q)$ is~isomorphic to~$\mathfrak{K}_{Q}$. Hence, $\mathfrak{H}_{Q},\,\mathfrak{K}_{Q} \in \mathcal{C}$ and~$\mathfrak{U}_{Q} \in \mathcal{C}$ because $\mathcal{C}$ is~closed under taking quotient groups and~extensions. If~$\mathfrak{K}$ is~normal in~$\mathfrak{U}$, then the~relation $\mathfrak{U}_{Q} \in \mathcal{C}$ is~proved in~exactly the~same way, while if $\mathfrak{U} \in \mathcal{C}$, this relation follows from~the~fact that $\mathcal{C}$ is~closed under taking quotient groups.

The~inclusion $\mathfrak{V}_{Q} \in \mathcal{C}$ can be~verified in~a~similar way, the~only difference is~that the~relation $\mathfrak{F}_{Q} \in \mathcal{C}$ is~ensured by~the~condition $\mathfrak{F} \in \mathcal{C}$. Hence, it~follows that $Q$ is~$\mathcal{C}$\nobreakdash-ad\-mis\-si\-ble due~to~Theorem~\ref{et01}.
\end{proof}

\begin{eproposition}\label{ep603}
\textup{\cite[Propositions~5.2,~6.1,~and~6.3, Theorem~2.2]{Sokolov2023JGT}}
If $\mathcal{C}$ is~a~root class of~groups consisting only of~periodic groups\textup{,} then the~following statements hold.

\textup{1.\hspace{1ex}}The~class of~$\mathcal{C}$\nobreakdash-bound\-ed nilpotent groups is~closed under taking subgroups and~quotient groups.

\textup{2.\hspace{1ex}}If the~exponent of~a~$\mathcal{C}$\nobreakdash-bound\-ed nilpotent group is~finite and~is~a~$\mathfrak{P}(\mathcal{C})$\nobreakdash-num\-ber\textup{,} then this group belongs to~$\mathcal{C}$.

\textup{3.\hspace{1ex}}Every $\mathcal{C}$\nobreakdash-bound\-ed nilpotent group is~$\mathcal{C}$\nobreakdash-qua\-si-reg\-u\-lar with~respect to~any of~its subgroups.

\textup{4.\hspace{1ex}}A~subgroup of~a~$\mathcal{C}$\nobreakdash-bound\-ed nilpotent group~$X$ is~$\mathcal{C}$\nobreakdash-sep\-a\-ra\-ble in~this group if and~only if it is~$\mathfrak{P}(\mathcal{C})^{\prime}$\nobreakdash-iso\-lat\-ed in~$X$.
\end{eproposition}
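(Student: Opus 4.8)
The plan is to deduce all four statements by induction on the length of a $\mathcal{C}$\nobreakdash-bounded central series, the whole technical weight falling on the case of a $\mathcal{C}$\nobreakdash-bounded abelian group; throughout put $\pi = \mathfrak{P}(\mathcal{C})$ and recall from Proposition~\ref{ep308} that every $\mathcal{C}$\nobreakdash-group has finite exponent and that a finite abelian group lies in $\mathcal{C}$ precisely when its order is a $\pi$\nobreakdash-number. Statement~1 in the abelian case is routine: a quotient of a subgroup $B \leqslant A$ has the form $B/D$ with $D \leqslant B$, and embeds into the quotient $A/D$ of $A$, so the condition controlling the $p$\nobreakdash-torsion of quotients passes to subgroups, while closure under quotients is immediate since a quotient of a quotient of $A$ is a quotient of $A$; in the nilpotent case one intersects a $\mathcal{C}$\nobreakdash-bounded central series with a subgroup (respectively projects it along a normal subgroup), the new factors being subgroups (respectively quotients) of the old ones. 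For Statement~2 it suffices to treat an abelian $A$ of finite exponent $e$ that is a $\pi$\nobreakdash-number: $A$ is the direct sum of its $p$\nobreakdash-components for $p \mid e$, each a bounded abelian $p$\nobreakdash-group, hence by Pr\"ufer's theorem a direct sum of cyclic groups $\mathbb{Z}/p^{j}$ with $j$ bounded; each such cyclic group lies in $\mathcal{C}$ by Proposition~\ref{ep308}, the number of summands of a given type does not exceed the cardinality of some $\mathcal{C}$\nobreakdash-group (here $\mathcal{C}$\nobreakdash-boundedness is used), so the corresponding unrestricted direct power, and then the finite direct product of these powers over $j$ and $p$, lies in $\mathcal{C}$ by the wreath\nobreakdash-product axiom of a root class, and $A$ embeds into the result. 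The nilpotent case follows because every factor of a $\mathcal{C}$\nobreakdash-bounded central series of $X$ has exponent dividing $e$, hence belongs to $\mathcal{C}$, and $X$ is built from them by finitely many extensions.

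The heart of the matter is Statement~3. I would first prove an abelian version: if $A$ is $\mathcal{C}$\nobreakdash-bounded abelian, $B \leqslant A$, and $B/M \in \mathcal{C}$ has exponent $m$ (a $\pi$\nobreakdash-number), then there is a $\pi$\nobreakdash-number $m'$ divisible by $m$ with $m'A \cap B \leqslant mB \leqslant M$. To see this, pass to $A/mB$, which is again $\mathcal{C}$\nobreakdash-bounded abelian, and take $m' = m \cdot \prod_{p \mid m} p^{e_{p}}$, where $p^{e_{p}}$ is the (finite) exponent of the $p$\nobreakdash-torsion of $A/mB$; a short computation with primary components then shows that any element of $B$ lying in $m'A$ maps to $0$ in $A/mB$. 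Since $A/m'A$ is a $\mathcal{C}$\nobreakdash-bounded abelian group of exponent dividing $m'$, it lies in $\mathcal{C}$ by Statement~2, so $N = m'A$ works.

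To lift Statement~3 to a $\mathcal{C}$\nobreakdash-bounded nilpotent group $X$ I would induct on the length of a $\mathcal{C}$\nobreakdash-bounded central series $1 = X_{0} \leqslant X_{1} \leqslant \dots \leqslant X_{n} = X$, with $X_{1}$ central. Given $Y \leqslant X$ and $M \in \mathcal{C}^{*}(Y)$, the inductive hypothesis applied to $X/X_{1}$ and the image of $Y$ yields $N_{1} \in \mathcal{C}^{*}(X)$ containing $X_{1}$ with $N_{1} \cap Y \leqslant M(Y \cap X_{1})$; the abelian version applied to the central subgroup $X_{1}$ and its subgroup $Y \cap X_{1}$ (whose quotient by $M \cap X_{1}$ lies in $\mathcal{C}$) handles the residual error inside $X_{1}$; and a verbal subgroup $X^{m''}$ — which lies in $\mathcal{C}^{*}(X)$ by Statement~2, being of exponent dividing $m''$ — is used to transport this central correction up to a genuine member of $\mathcal{C}^{*}(X)$. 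The step I expect to be hardest is exactly this reconciliation: $N_{1}$ only controls $N_{1} \cap Y$ modulo the centre, and the central correction obtained inside $X_{1}$ must be matched against the ambient pro\nobreakdash-$\mathcal{C}$ structure of $X$ so that the final intersection $N$ still satisfies $N \cap Y \leqslant M$ — this is where the $\pi$\nobreakdash-number exponents and $\mathcal{C}$\nobreakdash-boundedness have to be handled simultaneously.

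For Statement~4 the ``only if'' direction needs no hypothesis on $X$: if $Y$ is $\mathcal{C}$\nobreakdash-separable, $x^{q} \in Y$ for a prime $q \notin \pi$, and $x \notin Y$, pick a homomorphism $\sigma$ of $X$ onto a $\mathcal{C}$\nobreakdash-group with $x\sigma \notin Y\sigma$; then $x\sigma$ has finite order, which is a $\pi$\nobreakdash-number and hence coprime to $q$, so $x\sigma$ is a power of $(x\sigma)^{q} \in Y\sigma$ and therefore lies in $Y\sigma$ — a contradiction. For the ``if'' direction I would again induct on nilpotency class, the abelian base being that a $\pi'$\nobreakdash-isolated subgroup $Y$ of a $\mathcal{C}$\nobreakdash-bounded abelian group $A$ satisfies $\bigcap (Y + m'A) = Y$, the intersection over $\pi$\nobreakdash-numbers $m'$, so that for $x \notin Y$ a verbal quotient $A/m'A \in \mathcal{C}$ separates $x$ from $Y$; this rests on showing that $\mathcal{C}$\nobreakdash-boundedness forces the intersections $\bigcap_{k} p^{k}G$ to vanish in a torsion\nobreakdash-free $\mathcal{C}$\nobreakdash-bounded abelian $G$, together with the fact that the torsion of $A/Y$ is a $\pi$\nobreakdash-group with bounded primary components because $Y$ is $\pi'$\nobreakdash-isolated and $A$ is $\mathcal{C}$\nobreakdash-bounded. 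The passage to the nilpotent case parallels that for Statement~3, using that (by Statement~3) the pro\nobreakdash-$\mathcal{C}$ topology of $X$ induces on any subgroup its own pro\nobreakdash-$\mathcal{C}$ topology, and describing closures as $\mathfrak{P}(\mathcal{C})'$\nobreakdash-isolators; the hardest points here are again the vanishing of $\bigcap_{k} p^{k}G$ under mere $\mathcal{C}$\nobreakdash-boundedness and propagating the isolation hypothesis through the central extensions.
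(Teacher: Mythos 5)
First, a point of reference: the paper does not prove Proposition~\ref{ep603} at all --- it is quoted verbatim from \cite{Sokolov2023JGT} (Propositions~5.2, 6.1, 6.3 and Theorem~2.2 there), so there is no in-paper argument to compare yours against; your proposal has to stand on its own. The parts you actually carry out are sound: Statement~1 by intersecting/projecting a $\mathcal{C}$-bounded central series; Statement~2 by decomposing a bounded abelian $\pi$-group into cyclic summands and embedding it into a finite product of unrestricted direct powers indexed by $\mathcal{C}$-groups (this is exactly where the cardinality clause of $\mathcal{C}$-boundedness and the direct-power axiom of a root class are needed); the abelian case of Statement~3 via $N=m'A$ with $m'=m\prod_{p\mid m}p^{e_p}$ (your primary-component computation in $A/mB$ is correct); the ``only if'' of Statement~4 by a B\'ezout argument on orders; and the abelian ``if'' of Statement~4, including the observation that $\bigcap_k p^kG=0$ for torsion-free $\mathcal{C}$-bounded $G$ and $p\in\mathfrak{P}(\mathcal{C})$ (otherwise $G/\mathbb{Z}g$ has unbounded $p$-torsion).

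The genuine gap is the one you yourself flag: the lift of Statements~3 and~4 from the abelian to the nilpotent case, and it is not a bookkeeping issue but the actual theorem of the cited reference. In your scheme you obtain $N_1\in\mathcal{C}^{*}(X)$ with $N_1\cap Y\leqslant M(Y\cap X_1)$ and, separately, control of $N\cap Y\cap X_1$ inside $M$; but for $g\in N\cap Y$ you only get $g=mz$ with $m\in M$ and $z\in Y\cap X_1$, and nothing forces $z$ into $N$ (or into $M$), so $N\cap Y\leqslant M$ does not follow from the two partial statements. The standard way to close this is to reduce to verbal subgroups --- since $Y/M$ has finite $\mathfrak{P}(\mathcal{C})$-exponent $m$, it suffices to find a $\mathfrak{P}(\mathcal{C})$-number $m'$ with $X^{m'}\cap Y\leqslant Y^{m}$, after which $N=X^{m'}$ works by your Statement~2 --- and this power-intersection lemma for nilpotent groups (a relative of Mal'cev--Gruenberg potency results) is precisely the nontrivial content of \cite{Sokolov2023JGT}; it requires commutator estimates in each layer of the central series, not just the abelian case applied to $X_1$. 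The same lemma is what makes the nilpotent ``if'' direction of Statement~4 go through. So your proposal is a correct and well-organized reduction to the right abelian facts, but the inductive step you defer is exactly the missing proof, and as sketched it would not assemble into a valid argument.
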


\begin{eproposition}\label{ep604}
If the~group $\mathbb{E} = \langle B,t;\,t^{-1}Ht = K,\,\varphi \rangle$ satisfies~$(*)$ and~$\mathcal{C}$ is~a~root class of~groups\textup{,} then the~following statements hold.

\textup{1.\hspace{1ex}}If $\mathfrak{V} \in \mathcal{C}$\textup{,} then\textup{,} for~each subgroup $R \in \mathcal{C}^{*}(L)$\textup{,} there exists a~subgroup $S \in \mathcal{C}^{*}(L)$ lying in~$R$ and~such that $S\mathfrak{v} = S$ for~any automorphism $\mathfrak{v} \in \mathfrak{V}$.

\textup{2.\hspace{1ex}}If $\mathfrak{U} \in \mathcal{C}$\textup{,} then\textup{,} for~each subgroup $R \in \mathcal{C}^{*}(H)$\textup{,} there exists a~subgroup $S \in \mathcal{C}^{*}(H)$ lying in~$R$ and~such that $S\mathfrak{u} = S$ for~any automorphism $\mathfrak{u} \in \mathfrak{U}$.

\textup{3.\hspace{1ex}}Suppose that $\mathcal{C}$ consists only of~periodic groups and~$N$ is~a~subgroup of~$B$ that is~locally cyclic or~$\mathcal{C}$\nobreakdash-bound\-ed nilpotent. Then\textup{,} for~each subgroup $R \in \mathcal{C}^{*}(N)$\textup{,} there exists a~subgroup $S \in \mathcal{C}^{*}(N)$ lying in~$R$ and~such that $S\mathfrak{a} = S$ for~any automorphism $\mathfrak{a} \in \operatorname{Aut}N$.
\end{eproposition}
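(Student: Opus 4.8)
The plan is to construct $S$, in each of the three statements, as an explicit automorphism-stable ``core'' of $R$. For Statements~1 and~2 one and the same construction works, so I would treat them together: for Statement~1 put $S=\bigcap_{\mathfrak{v}\in\mathfrak{V}}R\mathfrak{v}$. Since $R$ is normal in $L$ and each $\mathfrak{v}$ is an automorphism of $L$, every $R\mathfrak{v}$ is normal in $L$, hence so is $S$; taking $\mathfrak{v}=1$ gives $S\leqslant R$; and $S$ is $\mathfrak{V}$\nobreakdash-invariant because for $\mathfrak{w}\in\mathfrak{V}$ the reindexing $\mathfrak{v}\mapsto\mathfrak{v}\mathfrak{w}$ permutes $\mathfrak{V}$, so $S\mathfrak{w}=\bigcap_{\mathfrak{v}}R(\mathfrak{v}\mathfrak{w})=S$. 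The substantive point is $L/S\in\mathcal{C}$: the canonical map embeds $L/S$ into the unrestricted direct product $\prod_{\mathfrak{v}\in\mathfrak{V}}L/R\mathfrak{v}$, and since $\mathfrak{v}$ carries $R$ onto $R\mathfrak{v}$ it induces an isomorphism $L/R\cong L/R\mathfrak{v}$; thus this product is the unrestricted direct product of copies of the $\mathcal{C}$\nobreakdash-group $L/R$ indexed by the $\mathcal{C}$\nobreakdash-group $\mathfrak{V}$, so it lies in $\mathcal{C}$ because $\mathcal{C}$ is a root class, and then $L/S\in\mathcal{C}$ because $\mathcal{C}$ is closed under taking subgroups. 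Note that closure of $\mathcal{C}$ under quotient groups is \emph{not} needed, which is exactly why the argument is phrased as an embedding into a product rather than as a quotient manipulation. Statement~2 is the same proof with $(L,\mathfrak{V})$ replaced by $(H,\mathfrak{U})$.

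For Statement~3 I would instead take $S$ to be the verbal subgroup $S=N^{m}=\langle\,x^{m}\mid x\in N\,\rangle$, where $m$ is the exponent of $N/R$; this exponent is finite by Proposition~\ref{ep308}, since $\mathcal{C}$ consists only of periodic groups. Being verbal, $S$ is fully invariant in $N$, in particular $\operatorname{Aut}N$\nobreakdash-invariant, and $S\leqslant R$ because $x^{m}\in R$ for every $x\in N$. By construction $N/S$ has exponent dividing $m$, and every prime divisor of $m$ divides the order of some element of the $\mathcal{C}$\nobreakdash-group $N/R$ and hence lies in $\mathfrak{P}(\mathcal{C})$, so the exponent of $N/S$ is a $\mathfrak{P}(\mathcal{C})$\nobreakdash-num\-ber. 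If $N$ is $\mathcal{C}$\nobreakdash-bound\-ed nilpotent, then $N/S$ is $\mathcal{C}$\nobreakdash-bound\-ed nilpotent by the first statement of Proposition~\ref{ep603} and so belongs to $\mathcal{C}$ by the second statement of that proposition; if $N$ is locally cyclic, then $N/S$ is a locally cyclic group of finite exponent, hence finite cyclic of order a $\mathfrak{P}(\mathcal{C})$\nobreakdash-num\-ber, and so belongs to $\mathcal{C}$ by the second statement of Proposition~\ref{ep308}. In both cases $S$ is the desired subgroup.

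The remaining checks --- normality, the isomorphisms $L/R\mathfrak{v}\cong L/R$, full invariance of $N^{m}$, and the reduction of a locally cyclic group of finite exponent to a finite cyclic one --- are routine. The only step that I expect to require genuine (if modest) care is the arithmetic bookkeeping in Statement~3: one must confirm that the exponent of $N/N^{m}$ really is a $\mathfrak{P}(\mathcal{C})$\nobreakdash-num\-ber and then invoke the correct $\mathcal{C}$\nobreakdash-membership criterion for each of the two admissible shapes of $N$, which is precisely where the hypothesis that $\mathcal{C}$ consists only of periodic groups is used.
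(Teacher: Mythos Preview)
Your proposal is correct and follows essentially the same approach as the paper: for Statements~1 and~2 the paper also takes $S=\bigcap_{\mathfrak{v}\in\mathfrak{V}}R\mathfrak{v}$ and invokes Remak's theorem together with the root-class axioms, and for Statement~3 it likewise uses the verbal subgroup $S=\operatorname{sgp}\{x^{q}\mid x\in N\}$ with $q$ the exponent of $N/R$, appealing to Propositions~\ref{ep308} and~\ref{ep603} exactly as you do. Your write-up is in fact slightly more careful in a couple of places (the reindexing argument for $\mathfrak{V}$-invariance, and saying the exponent of $N/S$ \emph{divides} $m$ rather than equals it).
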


\begin{proof}
1.\hspace{1ex}Let $S = \bigcap_{\mathfrak{v} \in \mathfrak{V}}R\mathfrak{v}$. By~Remak's theorem (see., e.g.,~\cite[Theorem~4.3.9]{KargapolovMerzlyakov1982}) the~quotient group~$L/S$ can be~embedded to~the~unrestricted direct product of~the~groups $L/R\mathfrak{v}$, $\mathfrak{v} \in \mathfrak{V}$, each of~which is~isomorphic to~the~$\mathcal{C}$\nobreakdash-group~$L/R$. Therefore, it~follows from~the~condition $\mathfrak{V} \in \mathcal{C}$ and~the~root class definition that $L/S \in \mathcal{C}$. It~is~also clear that $S\mathfrak{v} = S$ for~any $\mathfrak{v} \in \mathfrak{V}$.

2.\hspace{1ex}Let $S = \bigcap_{\mathfrak{u} \in \mathfrak{U}}R\mathfrak{u}$. As~in~the~proof of~Statement~1, it~follows from~the~inclusions $R \in \mathcal{C}^{*}(H)$ and~$\mathfrak{U} \in \mathcal{C}$ that $S \in \mathcal{C}^{*}(H)$. The~equality $S\mathfrak{u} = S$ is~obvious for~any $\mathfrak{u} \in \mathfrak{U}$.

3.\hspace{1ex}By~Proposition~\ref{ep308}, the~exponent~$q$ of~the~$\mathcal{C}$\nobreakdash-group~$N/R$ is~finite. Consider the~subgroup $S = \operatorname{sgp}\{x^{q} \mid x \in N\}$. Clearly, $S \leqslant R$ and~$S\mathfrak{a} = S$ for~any automorphism $\mathfrak{a} \in \operatorname{Aut}N$. It~is~also obvious that $q$ is~a~$\mathfrak{P}(\mathcal{C})$\nobreakdash-num\-ber and~is equal to~the~exponent of~$N/S$. Therefore, if~$N$ is~a~locally cyclic group, then $N/S$ is~a~finite cyclic group, which belongs to~$\mathcal{C}$ by~Proposition~\ref{ep308}. When $N$ is~a~$\mathcal{C}$\nobreakdash-bound\-ed nilpotent group, $N/S$~is~also a~$\mathcal{C}$\nobreakdash-bound\-ed nilpotent group and~$N/S \in \mathcal{C}$ due~to~Statements~1 and~2 of~Proposition~\ref{ep603}.
\end{proof}

\begin{eproposition}\label{ep605}
Suppose that the~group $\mathbb{E} = \langle B,t;\,t^{-1}Ht = K,\,\varphi \rangle$ satisfies~$(*)$\textup{,} $\mathcal{C}$~is a~root class of~groups closed under taking quotient groups\textup{,} $H/L \in \mathcal{C}$\textup{,} and~$\mathfrak{F} \in \mathcal{C}$. Suppose also that at~least one of~the~following statements holds\textup{:}
\begin{list}{}{\topsep=0pt\itemsep=0pt\labelsep=1ex\labelwidth=3ex\leftmargin=\parindent\addtolength{\leftmargin}{\labelsep}\addtolength{\leftmargin}{\labelwidth}}
\item[\textup{(\makebox[1.25ex]{$a$})}]the~group~$B$ is~$\mathcal{C}$\nobreakdash-qua\-si-reg\-u\-lar with~respect to~$HK$\textup{,} \textup{(\makebox[1.1ex]{\dag})}~holds\textup{,} and~$\mathfrak{V} \in \mathcal{C}$\textup{;}

\item[\textup{(\makebox[1.25ex]{$b$})}]the~class~$\mathcal{C}$ consists only of~periodic groups\textup{,} $H$~and~$K$ are~locally cyclic subgroups\textup{,} and~the~group~$B$ is~$\mathcal{C}$\nobreakdash-qua\-si-reg\-u\-lar with~respect to~$HK$\textup{;}

\item[\textup{(\makebox[1.25ex]{$c$})}]the~class~$\mathcal{C}$ consists only of~periodic groups\textup{,} \textup{(\makebox[1.1ex]{\dag})}~and~\textup{(\makebox[1.1ex]{\ddag})} hold\textup{,} and~$B$ is~a~$\mathcal{C}$\nobreakdash-bound\-ed nilpotent group.
\end{list}
If $B/H$ and~$B/K$ are~residually $\mathcal{C}$\nobreakdash-groups\textup{,} then $\mathbb{E}$ and~$B$ are~residually $\mathcal{C}$\nobreakdash-groups simultaneously.
\end{eproposition}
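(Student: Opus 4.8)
The plan is to reduce the statement to Proposition~\ref{ep602}, whose hypotheses are almost exactly what we want: if $B/H$ and $B/K$ are residually $\mathcal{C}$-groups, if $\mathfrak{F} \in \mathcal{C}$, if conditions (\dag) and (\ddag) hold, and if each subgroup of $\mathcal{C}^{*}(B)$ contains a pre-$\mathcal{C}$-admissible subgroup, then $\mathbb{E}$ and $B$ are residually $\mathcal{C}$-groups simultaneously. So the work splits into two independent pieces: (i) verifying, in each of the three cases (a), (b), (c), that (\dag) and (\ddag) hold; and (ii) verifying, again in each case, that every $N \in \mathcal{C}^{*}(B)$ contains a pre-$\mathcal{C}$-admissible subgroup $Q$, i.e. a normal $(H,K,\varphi)$-compatible $Q \leqslant N$ with $B/Q \in \mathcal{C}$ and $HQ/Q \cap KQ/Q = LQ/Q$.

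For piece (i): in Case~(a) we are literally handed (\dag) and $\mathfrak{V} \in \mathcal{C}$, and $\mathfrak{V} \in \mathcal{C}$ trivially implies (\ddag). In Case~(c) we are handed both (\dag) and (\ddag) directly. In Case~(b), $H$ and $K$ locally cyclic forces $\operatorname{Aut}H$ to be abelian, so $\mathfrak{H}$ is normal in $\mathfrak{U}$ (giving (\dag)) and $\mathfrak{L}$ is normal in $\mathfrak{V}$ (giving (\ddag)); moreover $L \leqslant H$ is locally cyclic, so $\mathfrak{F} = \operatorname{sgp}\{\varphi|_L\}$ is a subgroup of the abelian group $\operatorname{Aut}L$ — but we still need $\mathfrak{F} \in \mathcal{C}$, which in Case~(b) is not assumed; here I expect to invoke that $\mathcal{C}$ consists of periodic groups together with Statement~1 of Proposition~\ref{et06} (or a direct argument): actually the cleanest route is that in Case~(b) we do not separately need $\mathfrak{F}\in\mathcal{C}$ as an extra hypothesis because $H/L \in \mathcal{C}$ is periodic and locally cyclic, hence finite, so $L$ has finite index in the locally cyclic group $H$; combined with $H$ locally cyclic one deduces $\mathfrak{F}$ is finite and, being a finite cyclic $\mathfrak{P}(\mathcal{C})$-group (its order divides data controlled by $H/L\in\mathcal C$), lies in $\mathcal{C}$ by Proposition~\ref{ep308}.2. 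This last deduction — pinning down why $\varphi|_L$ has $\mathcal{C}$-order in Case~(b) — is the one place I expect to have to be careful.

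For piece (ii): fix $N \in \mathcal{C}^{*}(B)$. We must shrink $N$ to a normal $(H,K,\varphi)$-compatible subgroup while keeping the quotient in $\mathcal{C}$ and achieving $HQ/Q \cap KQ/Q = LQ/Q$. The natural candidate is to build $Q$ out of $N \cap L$, $N \cap H$, $N \cap K$ using the quasi-regularity hypotheses. In Cases~(a) and (b), $B$ is $\mathcal{C}$-quasi-regular with respect to $HK$; apply this to $(N\cap HK) \in \mathcal{C}^{*}(HK)$ — actually, more precisely, one wants a subgroup $M$ of $HK$ with $M \in \mathcal{C}^{*}(HK)$, $M$ normal in $B$, and $M \cap H$, $M \cap K$ arranged so that $M\cap H$ and $M\cap K$ have the right intersection ($M\cap L$). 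Here I would use Proposition~\ref{ep604}: since $\mathfrak{V}\in\mathcal C$ (Cases (a),(c)) resp.\ since $L$ is locally cyclic periodic (Case (b)), one can find $S \in \mathcal{C}^{*}(L)$ inside $N\cap L$ that is invariant under all of $\mathfrak{V}$, hence under $\varphi|_L$ and under all $\widehat{b}|_L$, $b\in B$; the $\varphi$-invariance of $S$ is what makes $S$ (or its normal closure) $(H,K,\varphi)$-compatible. Similarly shrink $N\cap H$ to a $\mathfrak{U}$-invariant subgroup using the $H/L\in\mathcal C$ hypothesis and (\dag). Then set $Q$ to be the normal closure in $B$ of an appropriate combination, and check (using the compatibility of the pieces and Proposition~\ref{ep303} for finite intersections) that $Q$ is $(H,K,\varphi)$-compatible, $Q\leqslant N$, $B/Q\in\mathcal C$, and $HQ/Q\cap KQ/Q = LQ/Q$. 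In Case~(c) one replaces the quasi-regularity-with-respect-to-$HK$ hypothesis by Statement~3 of Proposition~\ref{ep603} ($\mathcal{C}$-bounded nilpotent groups are $\mathcal{C}$-quasi-regular with respect to any subgroup) plus Statement~3 of Proposition~\ref{ep604}, and runs the same construction.

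The main obstacle is the bookkeeping in piece (ii): producing a single normal subgroup $Q$ of $B$ that simultaneously (a) sits below the given $N$, (b) is $\varphi$-compatible on the nose, (c) realizes the intersection condition $HQ/Q \cap KQ/Q = LQ/Q$ rather than something larger, and (d) has $\mathcal{C}$-quotient. Steps (b) and (c) pull against step (a): compatibility and the sharp intersection condition typically force $Q$ to be larger (e.g.\ a normal closure), while (a) demands it stay small, and one must verify that enlarging $N\cap L$ etc.\ to invariant subgroups via Proposition~\ref{ep604} and then taking normal closures does not destroy the containment in $N$ — which works precisely because $L$, $H$, $K$ are normal in $B$ so the relevant normal closures stay inside products of conjugate-invariant subgroups of $N$. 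I expect the proof to proceed: (1) dispatch (\dag),(\ddag),$\mathfrak F\in\mathcal C$ case by case; (2) fix $N\in\mathcal C^*(B)$, extract $\mathfrak{V}$-invariant $S\leqslant N\cap L$ and $\mathfrak{U}$-invariant $R\leqslant N\cap H$ via Proposition~\ref{ep604}, and use quasi-regularity to pull these up to a normal-in-$B$ subgroup of $HK$ with $\mathcal C$-quotient and controlled intersections; (3) assemble $Q$, verify the four properties, conclude $Q$ is pre-$\mathcal{C}$-admissible; (4) invoke Proposition~\ref{ep602}.
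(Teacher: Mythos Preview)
Your overall strategy---reduce to Proposition~\ref{ep602} by verifying (\dag), (\ddag) in each case and showing every $M\in\mathcal C^{*}(B)$ contains a pre-$\mathcal C$-admissible subgroup, using Proposition~\ref{ep604} to produce invariant subgroups of $L$ and $\mathcal C$-quasi-regularity to lift---is exactly the paper's approach, and it works. Two corrections streamline your sketch considerably.

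First, $\mathfrak F\in\mathcal C$ is a \emph{top-level hypothesis} of the proposition (read the first sentence again), so your whole detour in Case~(b) trying to deduce it from $H/L\in\mathcal C$ and periodicity is unnecessary.

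Second, the construction in piece~(ii) is simpler than you fear: you need \emph{only} the $\mathfrak V$-invariant $S\leqslant M\cap L$ from Proposition~\ref{ep604} (Statement~1 in Case~(a), Statement~3 in Cases~(b),(c) since $L$ is then locally cyclic resp.\ $\mathcal C$-bounded nilpotent). No $\mathfrak U$-invariant subgroup of $H$ and no normal closures are required. Because $\mathfrak V=\operatorname{Aut}_{\mathbb E}(L)$, the $\mathfrak V$-invariance of $S$ already makes $S$ normal in $\mathbb E$ and $\varphi$-invariant. Then $HK/S\in\mathcal C$ by a two-step extension argument using $H/L\in\mathcal C$ (and $K/L\cong H/L$); since $S$ is normal in $B$, Proposition~\ref{ep307} upgrades quasi-regularity to give $N\in\mathcal C^{*}(B)$ with $N\cap HK=S$ \emph{exactly}. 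Set $Q=M\cap N$: then $Q\cap HK=S$, whence $(Q\cap H)\varphi=S\varphi=S=Q\cap K$ and $HQ/Q\cap KQ/Q=LQ/Q$ follow by the short coset computation in the paper. The ``bookkeeping obstacle'' you anticipate dissolves once you see that $S$ is automatically normal in $B$ and that Proposition~\ref{ep307} gives an equality, not just a containment.
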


\begin{proof}
First of~all, let~us note that, by~Proposition~\ref{ep603}, a~$\mathcal{C}$\nobreakdash-bound\-ed nilpotent group is~$\mathcal{C}$\nobreakdash-qua\-si-reg\-u\-lar with~respect to~any of~its subgroups. It~is~also known that the~automorphism group of~a~locally cyclic group is~abelian (see, e.g.,~\cite[\S~113, Exercise~4]{Fuchs21973}). Therefore, \textup{(\makebox[1.1ex]{\dag})},~\textup{(\makebox[1.1ex]{\ddag})}, and~the~$\mathcal{C}$\nobreakdash-qua\-si-reg\-u\-lar\-i\-ty of~$B$ with~respect to~$HK$ hold under any of~Statements~\textup{(\makebox[1.25ex]{$a$})}\nobreakdash--\textup{(\makebox[1.25ex]{$c$})}. This fact and~Proposition~\ref{ep602} imply that, to~end the~proof, it~suffices to~fix a~subgroup $M \in \mathcal{C}^{*}(B)$ and~show that it contains a~pre-$\mathcal{C}$-ad\-mis\-si\-ble subgroup.

Let $R = M \cap L$. Then $R \in \mathcal{C}^{*}(L)$ by~Proposition~\ref{ep302}. If~$H$ and~$K$ are~locally cyclic groups, then $L$ is~also locally cyclic. By~Proposition~\ref{ep603}, if~$B$ is~a~$\mathcal{C}$\nobreakdash-bound\-ed nilpotent group, then $L$ has the~same property. Therefore, it~follows from~Statements~1 and~3 of~Proposition~\ref{ep604} that there exists a~subgroup $S \in \mathcal{C}^{*}(L)$ lying in~$R$ and~satisfying the~equality $S\mathfrak{v} = S$ for~any automorphism $\mathfrak{v} \in \mathfrak{V}$. Since $\mathfrak{V} = \operatorname{Aut}_{\mathbb{E}}(L)$, the~subgroup~$S$ turns~out to~be~normal in~$\mathbb{E}$ and,~in~particular, is~$\varphi$\nobreakdash-in\-var\-i\-ant. The~quotient group~$HK/S$ is~an~extension of~the~$\mathcal{C}$\nobreakdash-group~$L/S$ by~a~group isomorphic to~$HK/L$. The~latter, in~turn, is~an~extension of~the~$\mathcal{C}$\nobreakdash-group~$H/L$ by~a~group isomorphic to~$HK/H$. The~equalities $H\varphi =\nolinebreak K$ and~$L\varphi = L$ imply that $K/L \cong H/L$. Since $HK/H \cong K/H \cap K = K/L$ and~the~class~$\mathcal{C}$ is~closed under taking extensions, it~follows that $HK/S \in \mathcal{C}$. The~$\mathcal{C}$\nobreakdash-qua\-si-reg\-u\-lar\-i\-ty of~$B$ with~respect to~$HK$ and~Proposition~\ref{ep307} guarantee the~existence of~a~subgroup $N \in \mathcal{C}^{*}(B)$ such that $N \cap HK = S$. Let~us show that the~subgroup $Q = M \cap N$ is~pre-$\mathcal{C}$-ad\-mis\-si\-ble and,~therefore, is~the~desired one.

Indeed, it~follows from~the~inclusions $M,N\kern-1.5pt{} \in \mathcal{C}^{*}(B)$ and~Proposition~\ref{ep303} that $Q\kern-1pt{} \in\nolinebreak \mathcal{C}^{*}(B)$. The~relations
\[
S \leqslant R = L \cap M = H \cap K \cap M
\]
and~$S\varphi = S$ imply that $Q \cap HK = M \cap (N \cap HK) = S$ and,~therefore,
\[
(Q \cap H)\varphi = (Q \cap H \cap HK)\varphi = S\varphi = S = Q \cap K \cap HK = Q \cap K.
\]
If~$x \in HQ/Q \cap KQ/Q$ and~$x = hQ = kQ$ for~some $h \in H$ and~$k \in K$, then
\[
h^{-1}k \in Q \cap HK = S \leqslant H \cap K.
\]
Hence, $h,\,k \in H \cap K = L$ and~$x \in LQ/Q$. Thus, $HQ/Q \cap KQ/Q \leqslant LQ/Q$ and,~because the~opposite inclusion is~obvious, the~subgroup~$Q$ is~pre-$\mathcal{C}$-ad\-mis\-si\-ble.
\end{proof}

\begin{eproposition}\label{ep606}
Suppose that the~group $\mathbb{E} = \langle B,t;\,t^{-1}Ht = K,\,\varphi \rangle$ satisfies~$(*)$\textup{,} $\mathcal{C}$~is a~root class of~groups closed under taking quotient groups\textup{,} $\mathfrak{F} \in \mathcal{C}$\textup{,} and~there exists a~homomorphism~$\sigma$ of~$B$ onto~a~group from~$\mathcal{C}$ acting injectively on~$L$. Suppose also that at~least one of~the~following statements holds\textup{:}
\begin{list}{}{\topsep=0pt\itemsep=0pt\labelsep=1ex\labelwidth=3ex\leftmargin=\parindent\addtolength{\leftmargin}{\labelsep}\addtolength{\leftmargin}{\labelwidth}}
\item[\textup{(\makebox[1.25ex]{$a$})}]the~group~$B$ is~$\mathcal{C}$\nobreakdash-qua\-si-reg\-u\-lar with~respect to~$HK$\textup{,} \textup{(\makebox[1.1ex]{\ddag})}~holds\textup{,} and~$\mathfrak{U} \in \mathcal{C}$\textup{;}

\item[\textup{(\makebox[1.25ex]{$b$})}]the~class~$\mathcal{C}$ consists only of~periodic groups\textup{,} $H$~and~$K$ are~locally cyclic subgroups\textup{,} and~the~group~$B$ is~$\mathcal{C}$\nobreakdash-qua\-si-reg\-u\-lar with~respect to~$HK$\textup{;}

\item[\textup{(\makebox[1.25ex]{$c$})}]the~class~$\mathcal{C}$ consists only of~periodic groups\textup{,} \textup{(\makebox[1.1ex]{\dag})}~and~\textup{(\makebox[1.1ex]{\ddag})} hold\textup{,} and~$B$ is~a~$\mathcal{C}$\nobreakdash-bound\-ed nilpotent group\textup{;}

\item[\textup{(\makebox[1.25ex]{$d$})}]the~group~$B$ is~$\mathcal{C}$\nobreakdash-qua\-si-reg\-u\-lar with~respect to~$HK$\textup{,} \textup{(\makebox[1.1ex]{\ddag})}~holds\textup{,} and~the~group~$\mathfrak{U}$ coincides with~one of~its subgroups~$\mathfrak{H}$ and~$\mathfrak{K}$.
\end{list}
If $B/H$ and~$B/K$ are~residually $\mathcal{C}$\nobreakdash-groups\textup{,} then $\mathbb{E}$ and~$B$ are~residually $\mathcal{C}$\nobreakdash-groups simultaneously.
\end{eproposition}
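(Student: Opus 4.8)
The plan is to argue along the lines of~Proposition~\ref{ep605}, using the~homomorphism~$\sigma$ in~place of~the~hypothesis $H/L \in \mathcal{C}$ used there. First I~would check that under each of~Statements~(a)--(d) the~conditions~(\dag) and~(\ddag) hold, the~group~$B$ is~$\mathcal{C}$-quasi-regular with~respect to~$HK$ \textup{(}in~case~(c) this comes from~Proposition~\ref{ep603}\textup{)}, and~$L \in \mathcal{C}$ \textup{(}because $\sigma$ embeds~$L$ into~a~$\mathcal{C}$-group\textup{)}. Since $\mathfrak{F} \in \mathcal{C}$ as~well, Proposition~\ref{ep602} then reduces the~statement to~the~claim that \emph{every subgroup $M \in \mathcal{C}^{*}(B)$ contains a~pre-$\mathcal{C}$-admissible subgroup}.

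I~would first isolate a~sufficient condition for~the~latter: a~subgroup~$Q$ normal in~$B$ with~$B/Q \in \mathcal{C}$ is~pre-$\mathcal{C}$-admissible as~soon as $(Q \cap H)\varphi = Q \cap K$ and~$Q \cap HK = (Q \cap H)(Q \cap K)$. Indeed, if~$hQ = kQ$ with~$h \in H$, $k \in K$, then $h^{-1}k \in Q \cap HK$ can be~written $s_{H}s_{K}$ with~$s_{H} \in Q \cap H$, $s_{K} \in Q \cap K$; hence $hs_{H} = ks_{K}^{-1} \in H \cap K = L$, so $h \in L(Q \cap H) \leqslant LQ$ and~$hQ \in LQ/Q$, whereas $LQ/Q \leqslant HQ/Q \cap KQ/Q$ always.

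For~the~construction, fix $M \in \mathcal{C}^{*}(B)$ and~set $H' = M \cap \ker\sigma \cap H$, $K' = M \cap \ker\sigma \cap K$; these are~normal in~$B$, lie in~$M$, have $H/H', K/K' \in \mathcal{C}$ \textup{(}being embeddable into $B/M \times \operatorname{Im}\sigma$\textup{)}, and~meet~$L$ trivially since $\ker\sigma \cap L = 1$. Inside $R := H' \cap \varphi^{-1}(K') \in \mathcal{C}^{*}(H)$ I~would pick a~subgroup $S_{H} \in \mathcal{C}^{*}(H)$ with~$S_{H} \leqslant R$ that is~invariant under~$\mathfrak{U}$: this is~possible by~Statement~2 of~Proposition~\ref{ep604} when $\mathfrak{U} \in \mathcal{C}$, and~by~Statement~3 of~Proposition~\ref{ep604} when $H$ is~locally cyclic or~$\mathcal{C}$-bounded nilpotent \textup{(}the latter proposition even yields $\operatorname{Aut}H$-invariance\textup{)}. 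Put $S_{K} = S_{H}\varphi$ and~$S = S_{H}S_{K}$. As~$S_{H}$ is~invariant under~$\mathfrak{H}$ and~$\mathfrak{K}$, both~$S_{H}$ and~$S_{K}$, and~hence~$S$, are~normal in~$B$; moreover $S \leqslant H'K' \leqslant M$, $S \cap H = S_{H}$, $S \cap K = S_{K}$, $(S \cap H)\varphi = S \cap K$, and~$S \cap L = 1$. Combining $L \in \mathcal{C}$, the~isomorphism $HK/L \cong H/L \times K/L$, and~$H/S_{H}, K/S_{K} \in \mathcal{C}$, one gets that $HK/S$ is~an~extension of~$L$ by~$H/LS_{H} \times K/LS_{K} \in \mathcal{C}$, so $HK/S \in \mathcal{C}$. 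Proposition~\ref{ep307}, applied via the~$\mathcal{C}$-quasi-regularity of~$B$ with~respect to~$HK$, now provides $N \in \mathcal{C}^{*}(B)$ with~$N \cap HK = S$, and~$Q = M \cap N$ satisfies $Q \leqslant M$, $B/Q \in \mathcal{C}$, $Q \cap HK = S = (Q \cap H)(Q \cap K)$, and~$(Q \cap H)\varphi = Q \cap K$; by~the~sufficient condition above, $Q$ is~the~required subgroup. In~case~(d), where $\mathfrak{U}$ is~$\mathfrak{H}$ or~$\mathfrak{K}$, Proposition~\ref{ep404} shows $\mathfrak{U}_{Q} = \mathfrak{H}_{Q}$ \textup{(}or~$\mathfrak{K}_{Q}$\textup{)} for~every normal $(H,K,\varphi)$-compatible~$Q$, so $\mathfrak{U}_{Q} \in \mathcal{C}$ automatically when $B/Q \in \mathcal{C}$; the~same construction then works with~the~requirement of~$\mathfrak{U}$-invariance on~$S_{H}$ dropped.

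The~step I~expect to~be~the~main obstacle is~the~choice of~$S_{H}$: it~has to~be~normal in~$B$, its $\varphi$-image $S_{H}\varphi$ has to~be~normal in~$B$ as~well \textup{(}which fails for~a~typical $B$-normal subgroup of~$H$, since $\varphi$ is~conjugation by~the~stable letter and~need not commute with~conjugation by~elements of~$B$; it~is~exactly invariance of~$S_{H}$ under~all of~$\mathfrak{U} = \operatorname{sgp}\{\mathfrak{H},\mathfrak{K}\}$ that repairs this\textup{)}, it~has to~be~of~$\mathcal{C}$-index in~$H$, and~yet small enough that $S_{H}$ and~$S_{H}\varphi$ both lie in~$M$. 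Conditions~(a)--(d) are~precisely what make these requirements compatible --- through Proposition~\ref{ep604} in~cases~(a)--(c) and~through Proposition~\ref{ep404} in~case~(d). Everything else --- the~reduction of~the~first step, the~sufficient condition, and~the~extension computation for~$HK/S$ --- should be~routine, resting only on~Propositions~\ref{ep301}--\ref{ep604}.
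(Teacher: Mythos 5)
Your handling of cases (a)--(c) is essentially the paper's own argument: the reduction via Proposition~\ref{ep602} to producing a pre-$\mathcal{C}$-admissible subgroup inside each $M \in \mathcal{C}^{*}(B)$, the subgroup $R = (M \cap \ker\sigma \cap H) \cap (M \cap \ker\sigma \cap K)\varphi^{-1}$, the $\mathfrak{U}$-invariant $S_{H} \in \mathcal{C}^{*}(H)$ inside $R$ supplied by Statements~2 and~3 of Proposition~\ref{ep604}, the normal subgroup $N$ with $N \cap HK = S_{H} \cdot S_{H}\varphi$ obtained from Proposition~\ref{ep307}, and the verification that $Q = M \cap N$ is pre-$\mathcal{C}$-admissible all match the paper, and your computations there are correct.

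Case (d), however, contains a genuine gap. You rightly identify in your closing paragraph that everything hinges on finding $S_{H} \in \mathcal{C}^{*}(H)$ with $S_{H} \leqslant R$ such that both $S_{H}$ and $S_{H}\varphi$ are normal in $B$, but you then assert that in case (d) the $\mathfrak{U}$-invariance requirement can simply be dropped and that Proposition~\ref{ep404} takes care of this case. Proposition~\ref{ep404} only relates $\mathfrak{U}$ to $\mathfrak{U}_{Q}$; it constructs no subgroup of $H$, and the issue it would address ($\mathfrak{U}_{Q} \in \mathcal{C}$) is already absorbed into Proposition~\ref{ep602} via condition~(\dag), which holds in case (d) since $\mathfrak{U}$ equals $\mathfrak{H}$ or $\mathfrak{K}$. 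What is actually missing is the existence of $S_{H}$ itself: Proposition~\ref{ep604} is unavailable here because neither $\mathfrak{U} \in \mathcal{C}$ nor local cyclicity or $\mathcal{C}$-bounded nilpotency of $H$ is assumed, and $R$ itself need not be normal in $B$ --- its factor $(M \cap \ker\sigma \cap K)\varphi^{-1}$ is the $\varphi^{-1}$-image of a $B$-normal subgroup of $K$, and the inclusion $\mathfrak{K} \leqslant \mathfrak{H}$ runs in the wrong direction to make that image $B$-normal. The paper closes this hole by invoking the $\mathcal{C}$-quasi-regularity of $B$ with respect to $HK$ a second time: since $HK/RK \cong (H/R)/(RL/R) \in \mathcal{C}$, there is $T \in \mathcal{C}^{*}(B)$ with $T \cap HK \leqslant RK$, and $S_{H} = M \cap \ker\sigma \cap T \cap H$ is then $B$-normal (an intersection of $B$-normal subgroups), lies in $\mathcal{C}^{*}(H)$, and is contained in $R$ because writing $s = rk$ with $r \in R$, $k \in K$ forces $k \in \ker\sigma \cap L = 1$. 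Once $S_{H}$ is $B$-normal, its $\mathfrak{U}$-invariance is automatic from $\mathfrak{U} = \mathfrak{H}$, and the rest of your construction goes through unchanged. Without some such argument, case (d) is not proved.
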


\begin{proof}
Replacing, if~necessary, $H$~with~$K$, $\varphi$~with~$\varphi^{-1}$, and~$t$ with~$t^{-1}$, we~can further assume that, if~Statement~\textup{(\makebox[1.25ex]{$d$})} holds, then $\mathfrak{U} = \mathfrak{H}$. As~in~the~proof of~Proposition~\ref{ep605}, \textup{(\makebox[1.1ex]{\dag})},~\textup{(\makebox[1.1ex]{\ddag})},~and~the~$\mathcal{C}$\nobreakdash-qua\-si-reg\-u\-lar\-i\-ty of~$B$ with~respect to~$HK$ hold under any of~Statements~\textup{(\makebox[1.25ex]{$a$})}\nobreakdash--\textup{(\makebox[1.25ex]{$d$})}. Therefore, it~suffices to~show that each subgroup $M \in \mathcal{C}^{*}(B)$ contains some pre-$\mathcal{C}$-ad\-mis\-si\-ble subgroup.

Let $P = M \cap \ker\sigma$ and~$R = (P \cap H) \cap (P \cap K)\varphi^{-1}$. It~follows from~the~inclusions $M,\,\ker\sigma \in \mathcal{C}^{*}(B)$ and~Propositions~\ref{ep302} and~\ref{ep303} that
\[
P \in \mathcal{C}^{*}(B), \quad 
P \cap H \in \mathcal{C}^{*}(H), \quad 
P \cap K \in \mathcal{C}^{*}(K), \quad 
(P \cap K)\varphi^{-1} \in \mathcal{C}^{*}(H),
\]
and~$R \in \mathcal{C}^{*}(H)$. Let~us show that there exists a~subgroup $S \in \mathcal{C}^{*}(H)$ lying in~$R$ and~satisfying the~equality $S\mathfrak{u} = S$ for~any automorphism $\mathfrak{u} \in \mathfrak{U}$.

If $B$ is~a~$\mathcal{C}$\nobreakdash-bound\-ed nilpotent group, then $H$ has the~same property by~Proposition~\ref{ep603}. Therefore, when one of~Statements~\textup{(\makebox[1.25ex]{$a$})}\nobreakdash--\textup{(\makebox[1.25ex]{$c$})} holds, the~desired subgroup~$S$ exists due~to~Statements~2 and~3 of~Proposition~\ref{ep604}. Let Statement~\textup{(\makebox[1.25ex]{$d$})} hold. Since the~class~$\mathcal{C}$ is~closed under taking quotient groups, it~follows from~the~relations
\[
HK/RK \cong H/R(H \cap K) \cong (H/R)/(RL/R)
\]
that $RK \in \mathcal{C}^{*}(HK)$. The~$\mathcal{C}$\nobreakdash-qua\-si-reg\-u\-lar\-i\-ty of~$B$ with~respect to~$HK$ guarantees the~existence of~a~subgroup $T \in \mathcal{C}^{*}(B)$ such that $T \cap HK \leqslant RK$. Let $S = P \cap T \cap H$. Then $S \leqslant RK$ and~$S \in \mathcal{C}^{*}(H)$ due~to~Propositions~\ref{ep302} and~\ref{ep303}. If~$s \in S$, then $s = rk$ for~suitable $r \in R$ and~$k \in K$, and~it follows from~the~inclusions $R,\,S \leqslant P \cap H$ that $k \in P \cap H \cap K \leqslant \ker\sigma \cap L = 1$. Hence, $S \leqslant R$. Since $P$, $T$, and~$H$ are~normal in~$B$, the~subgroup~$S$ has the~same property. Therefore, $S\mathfrak{h} = S$ for~each $\mathfrak{h} \in \mathfrak{H}$, and~$S\mathfrak{u} = S$ for~each $\mathfrak{u} \in \mathfrak{U}$ because $\mathfrak{U} = \mathfrak{H}$.

Thus, a~subgroup~$S$ with~the~required properties always exists. Since $\operatorname{Aut}_{B}(H) = \mathfrak{H} \leqslant \mathfrak{U}$ and~$\varphi\operatorname{Aut}_{B}(K)\varphi^{-1} = \mathfrak{K} \leqslant \mathfrak{U}$, the~equalities $S\mathfrak{h} = S$, $S\varphi \mathfrak{k}\varphi^{-1} = S$, and~$(S\varphi)\mathfrak{k} = S\varphi$ hold for~all $\mathfrak{h} \in \operatorname{Aut}_{B}(H)$ and~$\mathfrak{k} \in \operatorname{Aut}_{B}(K)$. Hence, $S$~and~$S\varphi$ are~normal in~$B$. It~follows from~the~relations
\[
S \leqslant R \leqslant P \cap H, \quad 
S\varphi \leqslant R\varphi \leqslant P \cap K, \quad 
P \cap H \cap K \leqslant \ker\sigma \cap L = 1
\]
that $S \cap K = 1 = S\varphi \cap H$. Therefore, $S {\cdot} S\varphi \cap H = S$ and~$S {\cdot} S\varphi \cap K = S\varphi$.

The~group $HK/S {\cdot} S\varphi$ is~an~extension of~$SK/S {\cdot} S\varphi$ by~a~group isomorphic to~$HK/SK$, and~the~class~$\mathcal{C}$ is~closed under taking extensions and~quotient groups. Therefore, it~follows from~the~relations
\begin{gather*}
SK/S {\cdot} S\varphi \cong K/S\varphi(K \cap S) \cong (K/S\varphi)/(S\varphi(K \cap S)/S\varphi),\\ 
HK/SK \cong H/S(H \cap K) \cong (H/S)/(SL/S),
\end{gather*}
and $K/S\varphi \cong H/S \in \mathcal{C}$ that $HK/S {\cdot} S\varphi \in \mathcal{C}$. Since $S {\cdot} S\varphi$ is~normal in~$B$ and~the~latter is~$\mathcal{C}$\nobreakdash-qua\-si-reg\-u\-lar with~respect to~$HK$, Proposition~\ref{ep307} implies the~existence of~a~subgroup $N \in \mathcal{C}^{*}(B)$ such that $N \cap HK = S {\cdot} S\varphi$. Let $Q = P \cap N$. Then $Q \leqslant P \leqslant M$, and~it follows from~Proposition~\ref{ep303} and~the~inclusions $P,\,N \in \mathcal{C}^{*}(B)$ that $Q \in \mathcal{C}^{*}(B)$. Let~us show that $Q$ is~pre-$\mathcal{C}$-ad\-mis\-si\-ble.

The~relations $S \leqslant P \cap H$ and~$S\varphi \leqslant P \cap K$ imply that
\[
Q \cap HK = P \cap (N \cap HK) = S {\cdot} S\varphi.
\]
Since $S {\cdot} S\varphi \cap H = S$ and~$S {\cdot} S\varphi \cap K = S\varphi$, as~proven above, the~equalities
\[
(Q \cap H)\varphi = (Q \cap H \cap HK)\varphi = S\varphi = Q \cap K \cap HK = Q \cap K
\]
hold. If~$x \in HQ/Q \cap KQ/Q$ and~$x = hQ = kQ$ for~some $h \in H$ and~$k \in K$, then
\[
h^{-1}k \in Q \cap HK = S {\cdot} S\varphi
\]
and~$h^{-1}k = ss^{\prime}$ for~suitable $s \in S$ and~$s^{\prime} \in S\varphi$. Therefore,
\[
hs = k(s^{\prime})^{-1} \in H \cap K = L,
\]
$h \in LS \leqslant LQ$, and~$x \in LQ/Q$. Thus, $HQ/Q \cap KQ/Q = LQ/Q$ and,~hence, $Q$ is~pre-$\mathcal{C}$-admis\-si\-ble.
\end{proof}

Obviously, if~$\mathcal{C}$ is~a~root class of~groups, then the~inclusion $\mathfrak{F} \in \mathcal{C}$ is~guaranteed by~the~condition $\mathfrak{V} \in \mathcal{C}$. Therefore, \textbf{Theorem~\ref{et04}} is~a~special case of~Proposition~\ref{ep607} below, which, in~turn, follows from~Propositions~\ref{ep605} and~\ref{ep606}. To~anticipate possible questions from~the~reader, we~note that Propositions~\ref{ep607} and~\ref{ep608} use Statements~$(\alpha)$ and~$(\beta)$ from~Theorem~\ref{et04}.

\begin{eproposition}\label{ep607}
Suppose that the~group $\mathbb{E} = \langle B,t;\,t^{-1}Ht = K,\,\varphi \rangle$ satisfies~$(*)$ and~$\mathcal{C}$~is a~root class of~groups closed under taking quotient groups. Suppose also that $B$ is~$\mathcal{C}$\nobreakdash-quasi-reg\-u\-lar with~respect to~$HK$ and~at~least one of~the~following statements holds\textup{:}
\begin{list}{}{\topsep=0pt\itemsep=0pt\labelsep=1ex\labelwidth=3ex\leftmargin=\parindent\addtolength{\leftmargin}{\labelsep}\addtolength{\leftmargin}{\labelwidth}}
\item[\textup{(\makebox[1.25ex]{$a$})}]$\mathfrak{V} \in \mathcal{C}$ and~$(\alpha)$ and~\textup{(\makebox[1.1ex]{\dag})} hold\textup{;}
\item[\textup{(\makebox[1.25ex]{$b$})}]$\mathfrak{U},\,\mathfrak{F} \in \mathcal{C}$ and~$(\beta)$ and~\textup{(\makebox[1.1ex]{\ddag})} hold.
\end{list}
If $B/H$ and~$B/K$ are~residually $\mathcal{C}$\nobreakdash-groups\textup{,} then~$\mathbb{E}$ and~$B$ are~residually $\mathcal{C}$\nobreakdash-groups simultaneously.
\end{eproposition}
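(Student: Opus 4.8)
The plan is to obtain Proposition~\ref{ep607} as an immediate consequence of Propositions~\ref{ep605} and~\ref{ep606}: alternative~(a) will be routed through Proposition~\ref{ep605} and alternative~(b) through Proposition~\ref{ep606}, so the whole argument reduces to checking that the hypotheses line up. The one remark worth isolating beforehand is the one stated just above the proposition: since $\mathcal{C}$ is a root class it is closed under taking subgroups, and $\mathfrak{F} = \operatorname{sgp}\{\varphi|_{L}\} \leqslant \mathfrak{V}$, so $\mathfrak{V} \in \mathcal{C}$ forces $\mathfrak{F} \in \mathcal{C}$.

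First I would assume that alternative~(a) holds, so $\mathfrak{V} \in \mathcal{C}$ and both $(\alpha)$ and~(\dag) are in force. Then $H/L \in \mathcal{C}$ by $(\alpha)$, and $\mathfrak{F} \in \mathcal{C}$ by the remark above. Combined with the standing hypotheses that $\mathbb{E}$ satisfies~$(*)$, that $\mathcal{C}$ is a root class closed under taking quotient groups, and that $B$ is $\mathcal{C}$-quasi-regular with respect to~$HK$, together with~(\dag) and $\mathfrak{V} \in \mathcal{C}$, this is exactly the hypothesis set of Proposition~\ref{ep605} under its first alternative~(a). Since $B/H$ and~$B/K$ are residually $\mathcal{C}$-groups, that proposition yields that $\mathbb{E}$ and~$B$ are residually $\mathcal{C}$-groups simultaneously.

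Next I would assume that alternative~(b) holds, so $\mathfrak{U},\mathfrak{F} \in \mathcal{C}$ and both $(\beta)$ and~(\ddag) hold. Statement~$(\beta)$ supplies a homomorphism $\sigma$ of~$B$ onto a group from~$\mathcal{C}$ acting injectively on~$L$, and $\mathfrak{F} \in \mathcal{C}$ is assumed outright. Together with the $\mathcal{C}$-quasi-regularity of~$B$ with respect to~$HK$, the condition~(\ddag), and $\mathfrak{U} \in \mathcal{C}$, this is precisely the hypothesis set of Proposition~\ref{ep606} under its first alternative~(a); applying that proposition, and once more using that $B/H$ and~$B/K$ are residually $\mathcal{C}$-groups, gives the conclusion.

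I do not expect any real obstacle here: every object involved ($L$, $\mathfrak{H}$, $\mathfrak{K}$, $\mathfrak{U}$, $\mathfrak{L}$, $\mathfrak{F}$, $\mathfrak{V}$) is defined because $\mathbb{E}$ satisfies~$(*)$, and the substantive work has already been carried out inside Propositions~\ref{ep605} and~\ref{ep606}. The only step that is not a pure tautology is the implication $\mathfrak{V} \in \mathcal{C} \Rightarrow \mathfrak{F} \in \mathcal{C}$ invoked in the first case; everything else is bookkeeping and a direct citation of the two preceding propositions.
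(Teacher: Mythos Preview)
Your proposal is correct and follows exactly the paper's own approach: the paper states that Proposition~\ref{ep607} ``follows from Propositions~\ref{ep605} and~\ref{ep606}'' and singles out the same implication $\mathfrak{V} \in \mathcal{C} \Rightarrow \mathfrak{F} \in \mathcal{C}$ (via closure of root classes under subgroups) that you isolate. Your routing of alternative~(a) through Proposition~\ref{ep605}(a) and alternative~(b) through Proposition~\ref{ep606}(a) is precisely the intended argument, just spelled out in more detail than the paper gives.
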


\noindent
\textbf{Theorem~\ref{et05}} follows from~Propositions~\ref{ep302},~\ref{ep605},~\ref{ep606} and~Theorem~\ref{et03}.

\begin{proof}[\textup{\textbf{Proof of~Theorem~\ref{et06}}}]
1.\hspace{1ex}Since $B/H$ and~$B/K$ are~residually $\mathcal{C}$\nobreakdash-groups, the~subgroups~$H$ and~$K$ are~$\mathcal{C}$\nobreakdash-sep\-a\-ra\-ble in~$B$ by~Proposition~\ref{ep301}. It~easily follows that $L$ is~also $\mathcal{C}$\nobreakdash-sep\-a\-ra\-ble in~$B$ and,~again by~Proposition~\ref{ep301}, $B/L$~is~residually a~$\mathcal{C}$\nobreakdash-group. Hence, if~the~subgroup~$H/L$ is~finite, then it belongs to~$\mathcal{C}$ due~to~Proposition~\ref{ep303}. Conversely, if~the~locally cyclic group~$H/L$ belongs to~$\mathcal{C}$, then it has a~finite exponent by~Proposition~\ref{ep308} and,~therefore, is~finite.

2.\hspace{1ex}If $(\beta)$ holds, then the~locally cyclic group~$L$ can be~embedded in~a~$\mathcal{C}$\nobreakdash-group. As~above, this implies its finiteness. The~opposite statement follows from~Proposition~\ref{ep303}.

3.\hspace{1ex}Necessity is~ensured by~Proposition~\ref{ep302} and~Theorem~\ref{et03}. To~prove sufficiency, let~us show that $B$ is~$\mathcal{C}$\nobreakdash-qua\-si-reg\-u\-lar with~respect to~$HK$. Then the~residual $\mathcal{C}$\nobreakdash-ness of~$\mathbb{E}$ will follow from~Statement~1 of~this theorem and~Proposition~\ref{ep605}.

As~noted in~the~proof of~the~latter, the~quotient group~$HK/L$ is~an~extension of~$H/L$ by~a~group isomorphic to~$H/L$ and,~therefore, is~finite. By~the~arguments used to~verify Statement~1, the~residual $\mathcal{C}$\nobreakdash-ness of~$B/H$ and~$B/K$ implies the~residual $\mathcal{C}$\nobreakdash-ness of~$B/L$. Due~to~Proposition~\ref{ep303}, it~follows that there exists a~subgroup $S/L \in \mathcal{C}^{*}(B/L)$ satisfying the~condition $S/L \cap HK/L = 1$. Clearly, $S \in \mathcal{C}^{*}(B)$ and~$S \cap HK \leqslant L$.

Now, if~$M \in \mathcal{C}^{*}(HK)$ and~$Q = M \cap L$, then $Q \in \mathcal{C}^{*}(L)$ by~Proposition~\ref{ep302}. Since $B$ is~$\mathcal{C}$\nobreakdash-qua\-si-reg\-u\-lar with~respect to~$L$, there exists a~subgroup $R \in \mathcal{C}^{*}(B)$ such that $R \cap L \leqslant Q$. Let $N = R \cap S$. Then $N \in \mathcal{C}^{*}(B)$ by~Proposition~\ref{ep303}~and
\[
N \cap HK = R \cap S \cap HK \leqslant R \cap L \leqslant Q \leqslant M.
\]
Thus, the~group~$B$ is~$\mathcal{C}$\nobreakdash-qua\-si-reg\-u\-lar with~respect to~$HK$, as~required.

4.\hspace{1ex}Sufficiency follows from~Proposition~\ref{ep303}, which ensures that $(\beta)$ holds, and~Proposition~\ref{ep606}. Let~us prove necessity.

\pagebreak

By Proposition~\ref{ep303}, since $\mathbb{E}$ is~residually a~$\mathcal{C}$\nobreakdash-group, it~has a~homomorphism onto~a~group from~$\mathcal{C}$ acting injectively on~the~finite subgroup~$L$. This fact and~Proposition~\ref{ep304} imply that $\mathfrak{V} = \operatorname{Aut}_{\mathbb{E}}(L) \in \mathcal{C}$ and~$\mathfrak{F} \in \mathcal{C}$. As~above, the~residual $\mathcal{C}$\nobreakdash-ness of~the~groups~$B$, $B/H$, and~$B/K$ is~ensured by~Proposition~\ref{ep302} and~Theorem~\ref{et03}.
\end{proof}

\noindent
\textbf{Corollary~\ref{ec02}} can be~deduced either from~Theorems~\ref{et03}\nobreakdash--\ref{et06} and~Propositions~\ref{ep301},~\ref{ep302}, and~\ref{ep603}, or~from~Proposition~\ref{ep608} below. The~second method uses the~fact that the~automorphism group of~a~locally cyclic group is~abelian, which is~already mentioned in~the~proof of~Proposition~\ref{ep605}.

\begin{eproposition}\label{ep608}
Suppose that the~group $\mathbb{E} = \langle B,t;\,t^{-1}Ht = K,\,\varphi \rangle$ satisfies~$(*)$ and~$\mathcal{C}$~is a~root class of~groups consisting only of~periodic groups and~closed under taking quotient groups. Suppose also that $\mathfrak{F} \in \mathcal{C}$ and~$B$ is~a~$\mathcal{C}$\nobreakdash-bound\-ed nilpotent group. Finally\textup{,} let~\textup{(\makebox[1.1ex]{\dag})}\textup{,}~\textup{(\makebox[1.1ex]{\ddag})}\textup{,} and~at~least one of~Statements~$(\alpha)$ and~$(\beta)$ hold. Then $\mathbb{E}$ is~residually a~$\mathcal{C}$\nobreakdash-group if and~only if the~subgroups~$\{1\}$\textup{,} $H$\textup{,} and~$K$ are~$\mathfrak{P}(\mathcal{C})^{\prime}$\nobreakdash-iso\-lat\-ed in~$B$.
\end{eproposition}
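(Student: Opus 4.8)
The statement is a biconditional that I would obtain by assembling the machinery of Section~\ref{es06} together with Proposition~\ref{ep603}; essentially no fresh computation is required. For the \emph{necessity} direction the plan is to start from the assumption that $\mathbb{E}$ is residually a~$\mathcal{C}$-group. First, since $B$ is a~subgroup of~$\mathbb{E}$ and a~root class is closed under taking subgroups, Proposition~\ref{ep302} gives that $B$ is residually a~$\mathcal{C}$-group. Next, because $B$ is nilpotent, its subgroups~$H$ and~$K$ satisfy a~non-trivial identity (the law of nilpotency of the class of~$B$), so the hypotheses of Theorem~\ref{et03} are met --- here one notes that a~root class closed under quotient groups is automatically closed under subgroups and, via closure under extensions, under direct products of finitely many factors --- and case~$(b)$ of Theorem~\ref{et03} then yields that $B/H$ and~$B/K$ are residually $\mathcal{C}$-groups. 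By Proposition~\ref{ep301} this means that $\{1\}$, $H$, and~$K$ are $\mathcal{C}$-separable in~$B$, and finally Statement~4 of Proposition~\ref{ep603}, applicable because $B$ is $\mathcal{C}$-bounded nilpotent, converts $\mathcal{C}$-separability into $\mathfrak{P}(\mathcal{C})^{\prime}$-isolatedness. Observe that $(\alpha)$, $(\beta)$, (\dag), (\ddag), and~$\mathfrak{F}\in\mathcal{C}$ are not used in this direction.

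For the \emph{sufficiency} direction I would run this chain backwards and then invoke one of the simultaneity propositions already established. Assuming $\{1\}$, $H$, and~$K$ are $\mathfrak{P}(\mathcal{C})^{\prime}$-isolated in~$B$, Proposition~\ref{ep603}(4) makes them $\mathcal{C}$-separable, and Proposition~\ref{ep301} then gives that $B = B/\{1\}$, $B/H$, and~$B/K$ are all residually $\mathcal{C}$-groups. Now split on which of~$(\alpha)$ and~$(\beta)$ holds. If~$(\alpha)$ holds, then $H/L\in\mathcal{C}$, and together with~$\mathfrak{F}\in\mathcal{C}$ this places us in the situation of Proposition~\ref{ep605}, whose option~$(c)$ --- ``$\mathcal{C}$ consists only of periodic groups, (\dag) and~(\ddag) hold, and~$B$ is $\mathcal{C}$-bounded nilpotent'' --- is exactly what we are given. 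If~$(\beta)$ holds, then $B$ admits a~homomorphism onto a~$\mathcal{C}$-group acting injectively on~$L$, which is the standing hypothesis of Proposition~\ref{ep606}, and again option~$(c)$ of that proposition matches our assumptions. In either case the cited proposition shows that $\mathbb{E}$ and~$B$ are residually $\mathcal{C}$-groups simultaneously, and since $B$ has this property, so does~$\mathbb{E}$.

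The only point requiring attention --- what I would call the ``main obstacle'', though it is bookkeeping rather than a~genuine difficulty --- is the matching of the hypotheses of the present proposition against those of Theorem~\ref{et03} and Propositions~\ref{ep605}--\ref{ep606}: that the closure conditions of Theorem~\ref{et03} are subsumed by ``root class closed under quotients'', that nilpotency of~$B$ supplies the non-trivial identity needed there, and that the periodicity of~$\mathcal{C}$ together with $B$ being $\mathcal{C}$-bounded nilpotent is precisely option~$(c)$ in both Propositions~\ref{ep605} and~\ref{ep606}. In particular, the $\mathcal{C}$-quasi-regularity of~$B$ with respect to~$HK$ that these propositions need is automatic by Proposition~\ref{ep603}(3) and is already absorbed into their proofs, so nothing further has to be verified. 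Once these identifications are in place, the argument is a~two-line concatenation in each direction.
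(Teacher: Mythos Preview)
Your proof is correct and follows essentially the same route as the paper: translate $\mathfrak{P}(\mathcal{C})'$-isolatedness into $\mathcal{C}$-separability via Proposition~\ref{ep603}(4), convert that into residual $\mathcal{C}$-ness of $B$, $B/H$, $B/K$ via Proposition~\ref{ep301}, and then appeal to Proposition~\ref{ep302} and Theorem~\ref{et03} for necessity and to Propositions~\ref{ep605}(c) and~\ref{ep606}(c) for sufficiency. The paper's proof is merely a terser statement of the same chain; your added remarks about why Theorem~\ref{et03} applies (nilpotency supplying the identity, and the closure properties being implied by the root-class axioms) are accurate bookkeeping.
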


\begin{proof}
First of~all, let~us note that, by~Proposition~\ref{ep603}, the~subgroups~$\{1\}$, $H$, and~$K$ are~$\mathfrak{P}(\mathcal{C})^{\prime}$\nobreakdash-iso\-lat\-ed in~$B$ if and~only if they are~$\mathcal{C}$\nobreakdash-sep\-a\-ra\-ble in~this group. Due~to~Proposition~\ref{ep301}, the~latter property is~equivalent to~the~residual $\mathcal{C}$\nobreakdash-ness of~the~groups~$B$, $B/H$, and~$B/K$. Therefore, necessity follows from~Proposition~\ref{ep302} and~Theorem~\ref{et03}, while sufficiency can be~deduced from~Propositions~\ref{ep605} and~\ref{ep606}.
\end{proof}


\begin{thebibliography}{99}





\bibitem{Hall1954PLMS}
\textit{Hall\hspace{.7ex}P.}\hspace{1ex}The~splitting properties of~relatively free groups, Proc.\ Lond.\ Math.\ Soc.\ s3\nobreakdash-4 (1) (1954) 343\nobreakdash--356, DOI:~\href{http://doi.org/10.1112/plms/s3-4.1.343}{10.1112/plms/s3-4.1.343}.

\bibitem{Gruenberg1957PLMS}
\textit{Gruenberg\hspace{.7ex}K.\hspace{.5ex}W.}\hspace{1ex}Residual properties of~infinite soluble groups, Proc.\ Lond.\ Math.\ Soc.\ s3\nobreakdash-7 (1) (1957) 29\nobreakdash--62, DOI:~\href{http://doi.org/10.1112/plms/s3-7.1.29}{10.1112/plms/s3-7.1.29}.

\bibitem{AzarovTieudjo2002PIvSU}
\textit{Azarov\hspace{.7ex}D.\hspace{.5ex}N.,\hspace{.8ex}Tieudjo\hspace{.7ex}D.}\hspace{1ex}On~the~root-class residuality of~a~free product of~groups with~an~amalgamated subgroup, Proc.\ Ivanovo State Univ.\ Math.\ 5 (2002) 6\nobreakdash--10 [in~Russian].\newline See also: \textit{\mbox{Azarov\hspace{.7ex}D.\hspace{.5ex}N.,}\hspace{1ex}Tieudjo\hspace{.7ex}D.}\hspace{1ex}On~root-class residuality of~generalized free products, arXiv: math/0408277 [math.GR], DOI:~\href{https://doi.org/10.48550/arXiv.math/0408277}{10.48550/arXiv.math/0408277}.

\bibitem{Sokolov2015CA}
\textit{Sokolov\hspace{.7ex}E.\hspace{.5ex}V.}\hspace{1ex}A~characterization of~root classes of~groups, Comm.\ Algebra 43 (2) (2015) 856\nobreakdash--860, DOI:~\href{http://doi.org/10.1080/00927872.2013.851207}{10.1080/00927872.2013.851207}.

\bibitem{Tumanova2017SMJ}
\textit{Tumanova\hspace{.7ex}E.\hspace{.5ex}A.}\hspace{1ex}The~root class residuality of~Baumslag--Solitar groups, Sib.\ Math.~J.\ 58 (3) (2017) 546\nobreakdash--552, DOI:~\href{http://doi.org/10.1134/S003744661703017X}{10.1134/S003744661703017X}.

\bibitem{SokolovTumanova2017MN}
\textit{Sokolov\hspace{.7ex}E.\hspace{.5ex}V.,\hspace{.8ex}Tumanova\hspace{.7ex}E.\hspace{.5ex}A.}\hspace{1ex}Root class residuality of~HNN-ex\-ten\-sions with~central cyclic associ\-ated subgroups, Math.\ Notes 102 (4) (2017) 556\nobreakdash--568, DOI:~\href{http://doi.org/10.1134/S0001434617090280}{10.1134/S0001434617090280}.

\bibitem{SokolovTumanova2019AL}
\textit{Sokolov\hspace{.7ex}E.\hspace{.5ex}V.,\hspace{.8ex}Tumanova\hspace{.7ex}E.\hspace{.5ex}A.}\hspace{1ex}Generalized direct products of~groups and~their application to the study of~residuality of~free constructions of~groups, Algebra Logic 58 (6) (2020) 480\nobreakdash--493,\newline DOI:~\href{http://doi.org/10.1007/s10469-020-09568-x}{10.1007/s10469-020-09568-x}.

\bibitem{SokolovTumanova2020SMJ}
\textit{Sokolov\hspace{.7ex}E.\hspace{.5ex}V.,\hspace{.8ex}Tumanova\hspace{.7ex}E.\hspace{.5ex}A.}\hspace{1ex}The~root-class residuality of~tree products with~central amalgamated subgroups, Sib.\ Math.~J.\ 61 (3) (2020) 545\nobreakdash--551, DOI:~\href{http://doi.org/10.1134/S0037446620030180}{10.1134/S0037446620030180}.

\bibitem{Sokolov2021SMJ2}
\textit{Sokolov\hspace{.7ex}E.\hspace{.5ex}V.}\hspace{1ex}On~the~root-class residuality of~the~fundamental groups of~certain graph of~groups with central edge subgroups, Sib.\ Math.~J.\ 62 (6) (2021) 1119\nobreakdash--1132, DOI:~\href{http://doi.org/10.1134/S0037446621060136}{10.1134/S0037446621060136}.{\parfillskip=0pt{}\par}

\bibitem{Sokolov2021JA}
\textit{Sokolov\hspace{.7ex}E.\hspace{.5ex}V.}\hspace{1ex}Certain residual properties of~generalized Baumslag--Solitar groups, J.~Algebra 582 (2021) 1\nobreakdash--25, DOI:~\href{http://doi.org/10.1016/j.jalgebra.2021.05.001}{10.1016/j.jalgebra.2021.05.001}.

\bibitem{SokolovTumanova2023SMJ}
\textit{Sokolov\hspace{.7ex}E.\hspace{.5ex}V.,\hspace{.8ex}Tumanova\hspace{.7ex}E.\hspace{.5ex}A.}\hspace{1ex}On~the~root-class residuality of~some generalized free products and HNN-extensions, Sib.\ Math.~J.\ 64 (2) (2023) 393\nobreakdash--406, DOI:~\href{http://doi.org/10.1134/S003744662302012X}{10.1134/S003744662302012X}.

\bibitem{LyndonSchupp1977}
\textit{Lyndon\hspace{.7ex}R.\hspace{.5ex}C.\textup{,}\hspace{1ex}Schupp\hspace{.7ex}P.\hspace{.5ex}E.}\hspace{.8ex}Combinatorial group theory (Springer--Verlag, New York, 1977),\newline DOI:~\href{http://doi.org/10.1007/978-3-642-61896-3}{10.1007/978-3-642-61896-3}.

\bibitem{Meskin1972TAMS}
\textit{Meskin\hspace{.7ex}S.}\hspace{1ex}Nonresidually finite one-relator groups, Trans.\ Amer.\ Math.\ Soc.\ 164 (1972) 105\nobreakdash--114,\newline DOI:~\href{http://doi.org/10.1090/S0002-9947-1972-0285589-5}{10.1090/S0002-9947-1972-0285589-5}.

\bibitem{BaumslagBTretkoff1978CA}
\textit{Baumslag\hspace{.7ex}B.,\hspace{.8ex}Tretkoff\hspace{.7ex}M.}\hspace{1ex}Residually finite HNN-ex\-ten\-sions, Comm.\ Algebra 6 (2) (1978) 179\nobreakdash--194, DOI:~\href{http://doi.org/10.1080/00927877808822240}{10.1080/00927877808822240}.

\bibitem{Shirvani1985AM}
\textit{Shirvani\hspace{.7ex}M.}\hspace{1ex}On~residually finite HNN-ex\-ten\-sions, Arch.\ Math.\ 44 (2) (1985) 110\nobreakdash--115,\newline DOI:~\href{http://doi.org/10.1007/BF01194073}{10.1007/BF01194073}.

\bibitem{RaptisVarsos1992JAMS}
\textit{Raptis\hspace{.7ex}E.,\hspace{.8ex}Varsos\hspace{.7ex}D.}\hspace{1ex}The~residual finiteness of~HNN-ex\-ten\-sions and~generalized free products of~nilpotent groups: a~characterization, J.~Austral.\ Math.\ Soc.\ Ser.\:A 53 (3) (1992) 408\nobreakdash--420,\newline DOI:~\href{http://doi.org/10.1017/S1446788700036570}{10.1017/S1446788700036570}.

\bibitem{KimTang1999CMB}
\textit{Kim\hspace{.7ex}G.,\hspace{.8ex}Tang\hspace{.7ex}C.\hspace{.5ex}Y.}\hspace{1ex}Cyclic subgroup separability of~HNN-ex\-ten\-sions with~cyclic associated sub\-groups, Can.\ Math.\ Bull.\ 42 (3) (1999) 335\nobreakdash--343, DOI:~\href{http://doi.org/10.4153/CMB-1999-039-4}{10.4153/CMB-1999-039-4}.

\bibitem{Moldavanskii2002BIvSU}
\textit{Moldavanskii\hspace{.7ex}D.\hspace{.5ex}I.}\hspace{1ex}The~residual finiteness of~some HNN-ex\-ten\-sions of~groups, Bull.\ Ivanovo State Univ.\ 3 (3) (2002) 123\nobreakdash--133 [in~Russian].

\bibitem{BorisovSapir2005IM}
\textit{Borisov\hspace{.7ex}A.,\hspace{.8ex}Sapir\hspace{.7ex}M.}\hspace{1ex}Polynomial maps over~finite fields and~residual finiteness of~mapping tori of group endomorphisms, Invent.\ Math.\ 160 (2) (2005) 341\nobreakdash--356, DOI:~\href{http://doi.org/10.1007/s00222-004-0411-2}{10.1007/s00222-004-0411-2}.

\bibitem{WongWong2012MS}
\textit{Wong\hspace{.7ex}K.\hspace{.5ex}B.,\hspace{.8ex}Wong\hspace{.7ex}P.\hspace{.5ex}C.}\hspace{1ex}Residual finiteness, subgroup separability and~conjugacy separability of~certain HNN~extensions, Math.\ Slovaca 62 (5) (2012) 875\nobreakdash--884, DOI:~\href{http://doi.org/10.2478/s12175-012-0052-7}{10.2478/s12175-012-0052-7}.

\bibitem{Azarov2013SMJ2}
\textit{Azarov\hspace{.7ex}D.\hspace{.5ex}N.}\hspace{1ex}On~the~residual finiteness of~the~HNN-ex\-ten\-sions and~generalized free products of~finite rank groups, Sib.\ Math.\ J.\ 54 (6) (2013) 959\nobreakdash--967, DOI:~\href{http://doi.org/10.1134/S0037446613060013}{10.1134/S0037446613060013}.

\bibitem{WongWong2014AC}
\textit{Wong\hspace{.7ex}K.\hspace{.5ex}B.,\hspace{.8ex}Wong\hspace{.7ex}P.\hspace{.5ex}C.}\hspace{1ex}The~weakly potency of~certain HNN extensions of~nilpotent groups, Algebra Colloq.\ 21 (4) (2014) 689\nobreakdash--696, DOI:~\href{http://doi.org/10.1142/S1005386714000637}{10.1142/S1005386714000637}.

\bibitem{Azarov2014MN}
\textit{Azarov\hspace{.7ex}D.\hspace{.5ex}N.}\hspace{1ex}On~the~residual finiteness of~descending HNN-ex\-ten\-sions of~groups, Math.\ Notes 96 (2) (2014) 161\nobreakdash--165, DOI:~\href{http://doi.org/10.1134/S0001434614070165}{10.1134/S0001434614070165}.

\bibitem{Logan2018CA}
\textit{Logan\hspace{.7ex}A.\hspace{.5ex}D.}\hspace{1ex}The~residual finiteness of~(hyperbolic) automorphism-induced HNN-ex\-ten\-sions, Comm.\ Algebra 46 (12) (2018) 5399\nobreakdash--5402, DOI:~\href{http://doi.org/10.1080/00927872.2018.1468904}{10.1080/00927872.2018.1468904}.

\bibitem{AsriWongWong2019MS}
\textit{Asri\hspace{.7ex}M.\hspace{.5ex}S.\hspace{.5ex}M.,\hspace{.8ex}Wong\hspace{.7ex}K.\hspace{.5ex}B.,\hspace{.8ex}Wong\hspace{.7ex}P.\hspace{.5ex}C.}\hspace{1ex}Fundamental groups of~graphs of~cyclic subgroup separable and~weakly potent groups, Algebra Colloq.\ 28 (1) (2021) 119\nobreakdash--130, DOI:~\href{http://doi.org/10.1142/S1005386721000110}{10.1142/S1005386721000110}.

\bibitem{RaptisVarsos1991JPAA}
\textit{Raptis\hspace{.7ex}E.,\hspace{.8ex}Varsos\hspace{.7ex}D.}\hspace{1ex}The~residual nilpotence of~HNN-ex\-ten\-sions with~base group a~finite or~a~f.\,g.\ abelian group, J.~Pure Appl.\ Algebra 76 (2) (1991) 167\nobreakdash--178, DOI:~\href{http://doi.org/10.1016/0022-4049(91)90059-B}{10.1016/0022-4049(91)90059-B}.{\parfillskip=0pt{}\par}

\bibitem{Chatzidakis1994IJM}
\textit{Chatzidakis\hspace{.7ex}Z.\hspace{.5ex}M.}\hspace{1ex}Some remarks on~profinite HNN~extensions, Israel J.\ Math.\ 85 (1\nobreakdash--3) (1994) 11\nobreakdash--18, DOI:~\href{http://doi.org/10.1007/BF02758634}{10.1007/BF02758634}.

\bibitem{Moldavanskii2000BIvSU}
\textit{Moldavanskii\hspace{.7ex}D.\hspace{.5ex}I.}\hspace{1ex}The~residual $p$\nobreakdash-finiteness of~HNN-ex\-ten\-sions, Bull.\ Ivanovo State Univ.\ 1 (3) (2000) 129\nobreakdash--140 [in~Russian].\ See also: \textit{Moldavanskii\hspace{.7ex}D.\hspace{.5ex}I.}\hspace{1ex}On~the~residuality a~finite group of~HNN-ex\-ten\-sions, arXiv:math/0701498 [math.GR], DOI:~\href{http://doi.org/10.48550/arXiv.math/0701498}{10.48550/arXiv.math/0701498}.

\bibitem{Moldavanskii2003BIvSU}
\textit{Moldavanskii\hspace{.7ex}D.\hspace{.5ex}I.}\hspace{1ex}The~residual $p$\nobreakdash-fi\-nite\-ness of~some HNN-ex\-ten\-sions of~groups, Bull.\ Ivanovo State Univ.\ 4 (3) (2003) 102\nobreakdash--116 [in~Russian].

\bibitem{Moldavanskii2006BIvSU}
\textit{Moldavanskii\hspace{.7ex}D.\hspace{.5ex}I.}\hspace{1ex}On~the~residual $p$\nobreakdash-finiteness of~HNN-ex\-ten\-sions of~nilpotent groups, Bull.\ Ivanovo State Univ.\ 7 (3) (2006) 128\nobreakdash--132 [in~Russian].

\bibitem{AschenbrennerFriedl2011JPAA}
\textit{Aschenbrenner\hspace{.7ex}M.,\hspace{.8ex}Friedl\hspace{.7ex}S.}\hspace{1ex}A~criterion for~HNN~extensions of~finite $p$\nobreakdash-groups to~be residually~$p$, J.~Pure Appl.\ Algebra 215 (9) (2011) 2280\nobreakdash--2289, DOI:~\href{http://doi.org/10.1016/j.jpaa.2011.02.016}{10.1016/j.jpaa.2011.02.016}.

\bibitem{Tieudjo2005JAMPA}
\textit{Tieudjo\hspace{.7ex}D.}\hspace{1ex}On~root class residuality of~HNN-ex\-ten\-sions, IMHOTEP: African J.~Pure Appl.\ Math.\ 6 (1) (2005) 18\nobreakdash--23.

\bibitem{Tumanova2014MAIS}
\textit{Tumanova\hspace{.7ex}E.\hspace{.5ex}A.}\hspace{1ex}On~the~root-class residuality of~HNN-ex\-ten\-sions of~groups, Model.\ Anal.\ Inform.\ Syst.\ 21 (4) (2014) 148\nobreakdash--180 [in~Russian], DOI:~\href{http://doi.org/10.18255/1818-1015-2014-4-148-180}{10.18255/1818-1015-2014-4-148-180}.

\bibitem{Goltsov2015MN}
\textit{Gol'tsov\hspace{.7ex}D.\hspace{.5ex}V.}\hspace{1ex}Approximability of~HNN-ex\-ten\-sions with~central associated subgroups by~a~root class of~groups, Math.\ Notes 97 (5) (2015) 679\nobreakdash--683, DOI:~\href{http://doi.org/10.1134/S000143461505003X}{10.1134/S000143461505003X}.

\bibitem{Tumanova2019SMJ}
\textit{Tumanova\hspace{.7ex}E.\hspace{.5ex}A.}\hspace{1ex}The~root class residuality of~the~tree product of~groups with~amalgamated retracts, Sib.\ Math.~J.\ 60 (4) (2019) 699\nobreakdash--708, DOI:~\href{http://doi.org/10.1134/S0037446619040153}{10.1134/S0037446619040153}.

\bibitem{Tumanova2020IVM}
\textit{Tumanova\hspace{.7ex}E.\hspace{.5ex}A.}\hspace{1ex}On~the~root-class residuality of~certain HNN-ex\-ten\-sions of~groups, Russ.\ Math.\ 64 (12) (2020) 38\nobreakdash--45, DOI:~\href{http://doi.org/10.3103/S1066369X20120051}{10.3103/S1066369X20120051}.

\bibitem{Sokolov2022CA}
\textit{Sokolov\hspace{.7ex}E.\hspace{.5ex}V.}\hspace{1ex}Certain residual properties of~HNN-ex\-ten\-sions with~central associated subgroups, Comm.\ Algebra 50 (3) (2022) 962\nobreakdash--987, DOI:~\href{http://doi.org/10.1080/00927872.2021.1976791}{10.1080/00927872.2021.1976791}.

\bibitem{Sokolov2023JGT}
\textit{Sokolov\hspace{.7ex}E.\hspace{.5ex}V.}\hspace{1ex}On~the~separability of~subgroups of~nilpotent groups by~root classes of~groups, J.~Group Theory 26 (4) (2023) 751\nobreakdash--777, DOI:~\href{http://doi.org/10.1515/jgth-2022-0021}{10.1515/jgth-2022-0021}.

\bibitem{BaumslagG1963TAMS1}
\textit{Baumslag\hspace{.7ex}G.}\hspace{1ex}On~the~residual finiteness of~generalized free products of~nilpotent groups, Trans.\ Amer.\ Math.\ Soc.\ 106 (2) (1963) 193\nobreakdash--209, DOI:~\href{http://doi.org/10.2307/1993762}{10.2307/1993762}.

\bibitem{Hempel1987AMS}
\textit{Hempel\hspace{.7ex}J.}\hspace{1ex}Residual finiteness for~$3$\nobreakdash-manifolds, in:~\textit{Gersten\hspace{.7ex}S.\hspace{.5ex}M.,\hspace{.8ex}Stallings\hspace{.7ex}J.\hspace{.5ex}R.} (eds.) Combinatorial group theory and~topology (Ann.\ Math.\ Stud.\ 111) (Princeton Univ.\ Press, Princeton, 1987), 379\nobreakdash--396, DOI:~\href{http://doi.org/10.1515/9781400882083-018}{10.1515/9781400882083-018}.

\bibitem{Shirvani1987JPAA}
\textit{Shirvani\hspace{.7ex}M.}\hspace{1ex}On~residually finite graph products, J.~Pure Appl.\ Algebra 49 (3) (1987) 281\nobreakdash--282, DOI:~\href{http://doi.org/10.1016/0022-4049(87)90136-8}{10.1016/0022-4049(87)90136-8}.

\enlargethispage{4pt}

\bibitem{Loginova1999SMJ}
\textit{Loginova\hspace{.7ex}E.\hspace{.5ex}D.}\hspace{1ex}Residual finiteness of~the~free product of~two groups with~commuting subgroups, Sib.\ Math.~J.\ 40 (2) (1999) 341\nobreakdash--350, DOI:~\href{http://doi.org/10.1007/s11202-999-0013-8}{10.1007/s11202-999-0013-8}.

\bibitem{Tumanova2014MN}
\textit{Tumanova\hspace{.7ex}E.\hspace{.5ex}A.}\hspace{1ex}On~the~residual $\pi$\nobreakdash-fi\-nite\-ness of~generalized free products of~groups, Math.\ Notes 95 (4) (2014) 544\nobreakdash--551, DOI:~\href{http://doi.org/10.1134/S0001434614030262}{10.1134/S0001434614030262}.

\bibitem{Sokolov2021SMJ1}
\textit{Sokolov\hspace{.7ex}E.\hspace{.5ex}V.}\hspace{1ex}The~root-class residuality of~the~fundamental groups of~graphs of~groups, Sib.\ Math.~J.\ 62 (4) (2021) 719\nobreakdash--729, DOI:~\href{http://doi.org/10.1134/S0037446621040145}{10.1134/S0037446621040145}.

\bibitem{SokolovTumanova2020LJM}
\textit{Sokolov\hspace{.7ex}E.\hspace{.5ex}V.,\hspace{.8ex}Tumanova\hspace{.7ex}E.\hspace{.5ex}A.}\hspace{1ex}To~the~question of~the~root-class residuality of~free constructions of~groups, Lobachevskii J.\ Math.\ 41 (2) (2020) 260\nobreakdash--272, DOI:~\href{http://doi.org/10.1134/S1995080220020158}{10.1134/S1995080220020158}.

\bibitem{Sokolov2023LJM}
\textit{Sokolov\hspace{.7ex}E.\hspace{.5ex}V.}\hspace{1ex}On~conditions for~the~approximability of~the~fundamental groups of~graphs of~groups by~root classes of~groups, Lobachevskii J.~Math.\ 44 (12) (2023) 5444\nobreakdash--5452,\newline DOI:~\href{http://doi.org/10.1134/S199508022312034X}{10.1134/S199508022312034X}.

\bibitem{KuvaevSokolov2017IVM}
\textit{Kuvaev\hspace{.7ex}A.\hspace{.5ex}E.,\hspace{.8ex}Sokolov\hspace{.7ex}E.\hspace{.5ex}V.}\hspace{1ex}Necessary conditions of~the~approximability of~generalized free products and~HNN-ex\-ten\-sions of~groups, Russ.\ Math.\ 61 (9) (2017) 32\nobreakdash--42, DOI:~\href{http://doi.org/10.3103/S1066369X17090043}{10.3103/S1066369X17090043}.

\bibitem{Malcev1958PIvPI}
\textit{Mal'cev\hspace{.7ex}A.\hspace{.5ex}I.}\hspace{1ex}On~homomorphisms onto~finite groups, Ivanov.\ Gos.\ Ped.\ Inst.\ Ucen.\ Zap.\ 18 (1958) 49\nobreakdash--60 [in~Russian].\ See also: \textit{Mal'cev\hspace{.7ex}A.\hspace{.5ex}I.}\hspace{1ex}On~homomorphisms onto~finite groups, Amer.\ Math.\ Soc.\ Transl.~(2) 119 (1983) 67\nobreakdash--79, DOI:~\href{http://doi.org/10.1090/trans2/119}{10.1090/trans2/119}.

\bibitem{SokolovTumanova2016SMJ}
\textit{Sokolov\hspace{.7ex}E.\hspace{.5ex}V.,\hspace{.8ex}Tumanova\hspace{.7ex}E.\hspace{.5ex}A.}\hspace{1ex}Sufficient conditions for~the~root-class residuality of~certain general\-ized free products, Sib.\ Math.~J.\ 57 (1) (2016) 135\nobreakdash--144, DOI:~\href{http://doi.org/10.1134/S0037446616010134}{10.1134/S0037446616010134}.

\bibitem{BaranovSokolov2025SMJ}
\textit{Baranov\hspace{.7ex}D.\hspace{.5ex}R.,\hspace{.8ex}Sokolov\hspace{.7ex}E.\hspace{.5ex}V.}\hspace{1ex}On~the~separability of~abelian subgroups of~the~generalized free product of~two groups with~normal amalgamated subgroup, Sib.\ Math.~J.\ 66 (2) (2025) 262\nobreakdash--272,\newline DOI:~\href{http://doi.org/10.1134/S003744662502003X}{10.1134/S003744662502003X}.

\bibitem{Sokolov2025IVM}
\textit{Sokolov\hspace{.7ex}E.\hspace{.5ex}V.}\hspace{1ex}Certain residual properties of~bounded nilpotent groups and~their tree products, Russ.\ Math.\ 69 (4) (2025) 60\nobreakdash--70, DOI:~\href{http://doi.org/10.26907/0021-3446-2025-4-60-70}{10.26907/0021-3446-2025-4-60-70}.

\bibitem{SokolovTumanova2020IVM}
\textit{Sokolov\hspace{.7ex}E.\hspace{.5ex}V.,\hspace{.8ex}Tumanova\hspace{.7ex}E.\hspace{.5ex}A.}\hspace{1ex}On~the~root-class residuality of~certain free products of~groups with normal amalgamated subgroups, Russ.\ Math.\ 64 (3) (2020) 43\nobreakdash--56,\newline DOI:~\href{http://doi.org/10.3103/S1066369X20030044}{10.3103/S1066369X20030044}.

\bibitem{Tumanova2015IVM}
\textit{Tumanova\hspace{.7ex}E.\hspace{.5ex}A.}\hspace{1ex}On~the~root-class residuality of~generalized free products with~a~normal amalgamation, Russ.\ Math.\ 59 (10) (2015) 23\nobreakdash--37, DOI:~\href{http://doi.org/10.3103/S1066369X15100035}{10.3103/S1066369X15100035}.

\bibitem{Britton1963AM}
\textit{Britton\hspace{.7ex}J.\hspace{.5ex}L.}\hspace{.8ex}The~word problem, Math.\ Ann.\ 77 (1) (1963) 16\nobreakdash--32, DOI:~\href{http://doi.org/10.2307/1970200}{10.2307/1970200}.

\bibitem{Cohen1974JAMS}
\textit{Cohen\hspace{.7ex}D.\hspace{.5ex}E.}\hspace{1ex}Subgroups of~HNN groups, J.~Austral.\ Math.\ Soc.\ 17 (4) (1974) 394\nobreakdash--405,\newline DOI:~\href{http://doi.org/10.1017/S1446788700018036}{10.1017/S1446788700018036}.

\bibitem{KarrasSolitar1970TAMS}
\textit{Karrass\hspace{.7ex}A.,\hspace{.8ex}Solitar\hspace{.7ex}D.}\hspace{1ex}The~subgroups of~a~free product of~two groups with~an~amalgamated sub\-group, Trans.\ Amer.\ Math.\ Soc.\ 150 (1) (1970) 227\nobreakdash--255, DOI:~\href{http://doi.org/10.1090/S0002-9947-1970-0260879-9}{10.1090/S0002-9947-1970-0260879-9}.

\bibitem{AllenbyTang1989CMB}
\textit{Allenby\hspace{.7ex}R.\hspace{.5ex}B.\hspace{.5ex}J.\hspace{.5ex}T.,\hspace{.8ex}Tang\hspace{.7ex}C.\hspace{.5ex}Y.}\hspace{1ex}On~the~residual finiteness of~certain polygonal products, Can.\ Math.\ Bull.\ 32 (1) (1989) 11\nobreakdash--17, DOI:~\href{http://doi.org/10.4153/CMB-1989-002-8}{10.4153/CMB-1989-002-8}.

\bibitem{KargapolovMerzlyakov1982}
\textit{Kargapolov\hspace{.7ex}M.\hspace{.5ex}I.,\hspace{.8ex}Merzlyakov\hspace{.7ex}Yu.\hspace{.5ex}I.}\hspace{1ex}Fundamentals of~the~theory of~groups, 3rd~ed.\ (Nauka, Moscow, 1982) [in~Russian].

\bibitem{Fuchs21973}
\textit{Fuchs\hspace{.7ex}L.}\hspace{1ex}Infinite abelian groups.\ Volume~II (Academic Press, New York, 1973).





\end{thebibliography}
\end{document}